\renewcommand{\bibnamedash}{\leavevmode\raise3pt\hbox to3em{\hrulefill}\space}
\numberwithin{equation}{defi}
\DeclareMathOperator{\sS}{\Delta^{\op} Set} \DeclareMathOperator{\htp}{\mathrm H}
\newcommand{\tp}{\mathrm{top}}
\DeclareMathOperator{\Ab}{\mathrm{Ab}}
\DeclareMathOperator{\Sh}{Sh}
\DeclareMathOperator{\PSh}{PSh}
\DeclareMathOperator{\HM}{HM}
\DeclareMathOperator{\iS}{\mathscr S} \DeclareMathOperator{\iC}{\mathscr C}
\DeclareMathOperator{\ihtp}{\mathscr H}
\DeclareMathOperator{\iSH}{\mathscr{S\!H}}
\DeclareMathOperator{\iSHcell}{\mathscr{S\!H}^{\mathrm{cell}}}
\DeclareMathOperator{\iDer}{\mathscr D}
\DeclareMathOperator{\iDA}{\mathscr D_{\AA^1}}
\DeclareMathOperator{\iDM}{\mathscr{DM}}
\DeclareMathOperator{\iPSh}{\mathscr{PS}h}
\DeclareMathOperator{\Fun}{\mathscr Fun}
\DeclareMathOperator{\Ho}{Ho}
\DeclareMathOperator{\Ker}{Ker}
\DeclareMathOperator{\GW}{GW} \DeclareMathOperator{\K}{K} \DeclareMathOperator{\MW}{MW}
\DeclareMathOperator{\W}{W} \newcommand{\KMW}{\mathrm K^{\mathrm{MW}}}
\newcommand{\KM}{\mathrm K^{\mathrm M}}
\DeclareMathOperator{\holim}{holim}
\DeclareMathOperator{\hocolim}{hocolim}
\newcommand {\piS}[1]{\pi^{\mathrm S}_{#1}\!} \newcommand {\hpiS}[1]{\hat \pi^{\mathrm S}_{#1}\!} \DeclareMathOperator{\upi}{\underline \pi}
\newcommand {\piA}[1]{\underline \pi^{\AA^1}_{#1}\!}
\newcommand {\piAS}[1]{\underline \pi^{\AA^1}_{#1}\!} \newcommand{\uKMW}{\underline{\mathrm K}^{\mathrm{MW}}}
\newcommand{\uKM}{\underline{\mathrm K}^{\mathrm M}}
\DeclareMathOperator{\uH}{\underline H}
\DeclareMathOperator{\Sus}{\Sigma^\infty}
\DeclareMathOperator{\Lop}{\Omega^\infty}
\DeclareMathOperator{\Tot}{Tot} \DeclareMathOperator{\CB}{CB}
\newcommand{\qd}[1]{\langle#1\rangle}
\DeclareMathOperator{\tdeg}{\widetilde{deg}}
\DeclareMathOperator{\car}{char}
\newcommand{\topo}{\mathrm{cl}} 
\newcommand{\cX}{\mathcal X}
\newcommand{\cY}{\mathcal Y}
\newcommand{\cZ}{\mathcal Z}
\newcommand{\cG}{\mathcal G}
\newcommand{\E}{\mathbf E}
\newcommand{\sX}{\mathbf X}
\newcommand{\sY}{\mathbf Y}
\newcommand{\un}{\mathbbm 1}
\newcommand{\MGL}{\mathbf{MGL}}
\newcommand{\MU}{\mathbf{MU}}
\newcommand{\sH}{\mathbf H}
\newcommand{\wMU}{\widehat{\mathbf{MU}}}
\newcommand{\BP}{\mathbf{BP}}
\newcommand{\MSL}{\mathbf{MSL}}
\newcommand{\BPGL}{\mathbf{BPGL}}
\newcommand{\KQ}{\mathbf{KQ}}
\newcommand{\kq}{\mathbf{kq}}
\newcommand{\KGL}{\mathbf{KGL}}
\newcommand{\sGW}{\mathbf{GW}}  \newcommand{\HB}{\mathbf H_{\mathrm B}}
\newcommand{\Het}{\mathbf H_{\et}}
\newcommand{\HMot}{\mathbf H_{\mathrm{M}}}
\newcommand{\M}{\mathbf M}
\newcommand{\HH}{\mathrm H}
\newcommand{\HA}{\mathrm H_{\AA^1}}
\newcommand{\GL}{\mathrm{GL}}
\newcommand{\SL}{\mathrm{SL}}
\newcommand{\Orth}{\mathrm{O}}
\newcommand{\BGL}{\mathrm{BGL}}
\newcommand{\BetO}{\mathrm{B_{\et}O}}
\newcommand{\etat}{\eta^{\mathrm{top}}}
\newcommand{\Sm}{\mathrm{Sm}}
\DeclareMathOperator{\SH}{SH}
\DeclareMathOperator{\Sq}{Sq} 
\DeclareMathOperator{\Id}{Id}
\DeclareMathOperator{\Hom}{Hom}
\DeclareMathOperator{\Aut}{Aut}
\DeclareMathOperator{\Map}{Map}
\DeclareMathOperator{\End}{End}
\DeclareMathOperator{\Ext}{Ext}
\DeclareMathOperator{\uHom}{\underline{Hom}} 
\DeclareMathOperator{\op}{op}
\DeclareMathOperator{\Pic}{Pic}
\DeclareMathOperator{\CH}{\mathrm{CH}}
\DeclareMathOperator{\spec}{Spec}
\newcommand{\ilim}[1]{\varinjlim_{#1}}
\DeclareMathOperator{\Th}{\mathrm{Th}}
\newcommand{\tw}[1]{\{#1\}}
\newcommand{\NN} {\mathbb N}
\newcommand{\ZZ} {\mathbb Z}
\newcommand{\QQ} {\mathbb Q}
\newcommand{\RR} {\mathbb R}
\newcommand{\CC} {\mathbb C}
\newcommand{\FF} {\mathbb F}
\newcommand{\cO}{\mathcal O}
\newcommand{\cL}{\mathcal L} 
\renewcommand{\AA} {\mathbb A}
\newcommand{\PP} {\mathbb P}
\newcommand{\GG} {\mathbb{G}_m}
\newcommand{\nis}{\mathrm{Nis}}
\newcommand{\zar}{\mathrm{Zar}}
\newcommand{\et}{\mathrm{\acute et}}
\date{Juin 2025}
\title{Motivic homotopy theory and stable homotopy groups}
\author{Frédéric Déglise}
\address{ENS de Lyon, UMPA, UMR 5669, \\ 46 all{\'e}e d'Italie, 69364 Lyon Cedex 07}
\email{frederic.deglise@ens-lyon.fr}
\begin{document}

\maketitle
\setcounter{tocdepth}{2}

\tableofcontents

\section*{Introduction}

The idea of applying topological methods to algebraic geometry dates back at least to Lefschetz,
 who in 1924 envisioned extending the nascent techniques of Analysis Situs to the study of algebraic varieties.
 Through the use of Lefschetz pencils, he introduced a topological viewpoint on phenomena such as degenerations and the behavior of varieties near their boundaries
 ---  ideas that already foreshadowed the modern concept of rational equivalence, which fundamentally encodes the deformation of cycles along the projective line \(\mathbb{P}^1\),
 and suggested a deep connection between geometry and homotopy theory.
 This foundational perspective was later profoundly reshaped by Grothendieck, who elevated the use of sheaves and topos theory
 as central tools for structuring and understanding cohomological invariants, culminating in his visionary and revolutionary, yet unfinished, theory of motives.

Beilinson revived this vision by formulating the concept of motivic cohomology, conceived as a universal cohomology theory for algebraic varieties,
 akin to singular cohomology in topology. His conjectures, especially those concerning the relation with algebraic K-theory and cycles,
 became a major driving force in the development of the theory. On the topological side, the advent of stable homotopy theory,
 initiated by Adams and his successors, introduced a new language centered around generalized cohomology and orientation theory;
 Brown representability, characteristic classes, and formal group laws became essential tools, shaping the modern understanding of stable phenomena.
 These culminated in the development of chromatic homotopy theory,
 a guiding philosophy that now serves as a central framework for understanding the layered structure of stable homotopy.

\bigskip

Motivic homotopy theory lies at the crossroads of these ideas and structures.
 Introduced by Voevodsky and later developed systematically with Morel, it seeks to import the methods of algebraic topology
 into the realm of algebraic geometry, first by defining homotopies using the affine line \(\mathbb{A}^1\), and second by building their homotopy theory
 within the topos of sheaves over smooth schemes, building on the foundational work of Illusie, Joyal, and Jardine.
 Motivic homotopy theory draws inspiration from several sources: from the theory of motives, with its Tate twists and philosophy of weights;
 from motivic cohomology, grounded in the theory of algebraic cycles and Chow groups; and from stable homotopy theory,
 especially through the examples of cobordism and Morava \(K\)-theories.
 Over time, it has developed into a rich and coherent framework that unifies these diverse perspectives.

At the heart of this unifying approach lies the theory of motivic homotopy sheaves, whose internal structure is governed by unramified cohomology.
 While the latter was originally developed by Gersten, Bloch–Ogus, and others,
 it reappears in motivic homotopy theory as an intrinsic phenomenon, encoded in the properties of motivic homotopy sheaves with respect Gersten resolutions.
 These ideas are central to Voevodsky’s approach to motivic complexes and play a foundational role in Morel’s generalization to the full \(\mathbb{A}^1\)-homotopy framework over a field.
 Through this perspective, motivic homotopy sheaves reflect the local-to-global nature of algebraic phenomena and serve as a key organizing tool in both the unstable and stable settings.
 Remarkably, they are also accessible to explicit computation, a feature we will illustrate throughout these notes.

\bigskip

The impact of motivic homotopy theory has been both broad and deep. It played a central role in Voevodsky’s proof of the Milnor and Bloch–Kato conjectures,
 which establish powerful bridges between motivic cohomology and étale cohomology, Galois cohomology and and Milnor K-theory.
 It also led to the development of the theory of motivic complexes and the six functors formalism, and to Ayoub's theory of the motivic Galois group.
 Morel's foundational work introduced new quadratic invariants --- such as the Chow–Witt groups --- which opened new perspectives on quadratic enumerative geometry
 lead by Levine and Wickelgren, giving for example the emerging notion of quadratic \(L\)-functions.
 The decomposition of stable motivic homotopy theory into \(\pm\)-parts has revealed previously unseen structures in algebraic geometry,
 particularly over real fields, where motivic realization functors extend the classical links with complex geometry into genuinely new territory
 connected to real algebraic geometry.

Motivic obstruction theory has also provided fresh approaches to the classification of algebraic vector bundles,
 through classifying spaces and characteristic classes valued in Chow–Witt groups, as developed notably by Asok and Fasel (building on an idea of Morel)
 in their work on Murthy's splitting conjecture.
 More broadly, motivic homotopy invariants often mirror those in classical homotopy theory. 
 Over time, increasingly rich connections between motivic and classical stable homotopy theory have been uncovered,
 thanks in part to the structural insights made possible by the Milnor and Bloch–Kato conjectures.
 These links have profoundly transformed the computation of classical stable stems,
 notably through the motivic approach advanced in its latest stage by \cite{IWX},
 which will form the central focus of the final part of these notes.
 Let us finally mention that the motivic approach, via synthetic homotopy theory,
 also led to a proposed resolution of the last remaining case of the Kervaire invariant one problem, in recent work by \cite{LWX}.

\bigskip

The present text is structured to support a gradual conceptual progression and to highlight the central ideas of motivic homotopy theory.
 The first part lays out guiding principles, beginning with complex and real realization functors,
 naïve \(\mathbb{A}^1\)-homotopies, and the choice of topology, gradually introducing model categories and \(\infty\)-categories only as needed.
 The second part is devoted to unstable motivic homotopy theory, using the language of sheaves and localizations more extensively, and motivating the definition of motivic homotopy sheaves.
 The third part presents stable homotopy theory via \(\mathbb{P}^1\)-stabilization, and studies motivic stable stems through the lens of Morel's degree, slice filtration and higher homotopy tools.
 The final part is devoted to recent developments, in particular the computations of motivic and classical stable stems based on the motivic Adams spectral sequence
 and the deformation of homotopy theories via the motivic class \(\tau\), drawing from the work of \cite{IWX}.

This text is written to be accessible at multiple levels.
 The first two parts are aimed at readers with a basic background in algebraic topology and algebraic geometry.
 The latter sections require more familiarity with stable homotopy theory,
 and in particular the last part fully adopts the language of $\infty$-categories.
 A brief review of this formalism is included at the end of the first section.
 We have tried throughout the text to maintain a pedagogical style, giving all necessary definitions and providing references to the literature.

We also refer the reader to several excellent surveys on motivic homotopy theory, each offering a distinct point of view;
 see, e.g., \cite{AE}, \cite{WW}, \cite{AsokOst}.

\section*{Conventions}

Throughout these notes, \( k \) denotes a fixed base field.

We work in the language of schemes: all schemes are assumed to be of finite type over \( k \). By a (algebraic) variety over \( k \), we mean a quasi-projective scheme, that is, a scheme locally defined by polynomial equations in an affine or projective space. Readers unfamiliar with the language of schemes may safely replace \( k \)-schemes with algebraic varieties over \( k \); the general theory remains unaffected.

Unless stated otherwise, the term ``monoidal'' means ``symmetric monoidal''. Units of monoidal categories are typically denoted by \( \un \).

A ``space'' always refers to a simplicial set. A ``presheaf'' (resp. ``sheaf'') means, without explicitly stated otherwise, a presheaf (resp. Nisnevich sheaf) over the category \( \Sm_k \) of smooth schemes over \( k \). A ``\( k \)-space'' is a presheaf of simplicial sets on \( \Sm_k \).

In the last section, all formal group laws are assumed to be commutative.

Concerning foundational choices: the unstable part of the theory is formulated, as far as possible, using a combination of model categories and \(\infty\)-categories,
 in order to help the reader navigate the existing literature. The stable part, however, relies more systematically on the \(\infty\)-categorical formalism.

\section*{Acknowledgments}

I am grateful to Aravind Asok, Dan Isaksen, and Oliver Röndigs for answering many questions
 and for enlightening exchanges. Joseph Ayoub kindly shared with me beautiful notes on his course in motivic homotopy.
 I would also like to thank the collaborators of Nicolas Bourbaki, as well as Aravind Asok, Adrien Dubouloz, Jean Fasel, Niels Feld, Paul Arne Østvær, Rakesh Pawar, Oliver Röndigs,
 and Raphaël Ruimy for their attentive reading and valuable suggestions on an earlier version of these notes.

\section{A few guiding principles}

\subsection{Complex and real homotopy types of algebraic varieties} \label{sec:real}

The main motivation behind motivic homotopy theory is to extend the invariants of
 classical homotopy theory to algebraic varieties.
 As envisioned by Lefschetz one century ago,
 the main guide to do so is to use the topological space underlying complex algebraic varieties.
 In fact, one of the appealing features of motivic homotopy is that it also naturally incorporate
 the homotopy of real algebraic varieties.

Let us consider a real or complex embedding $\sigma:k \rightarrow E=\RR, \CC$
 of our base field $k$, and $X$ be an algebraic $k$-variety.
 When $\sigma$ is a complex embedding,
 we let $X^\sigma(\CC)=\Hom_k(\spec(\CC),X)$ be the set $\CC$-points of $X$,
 where $\spec{\CC}$ is viewed as a $k$-scheme via $\sigma$,
 endowed with its natural analytic topology --- coming from its canonical structure of complex analytic variety: see \cite[XII, Th. 1.1]{SGA1}.
 Similarly, when $\sigma$ is a real embedding, we let $X^\sigma(\RR)=\Hom_k(\spec(\RR),X)$ endowed with
 canonical euclidean topology (similarly coming from its structure of real analytic variety).
 In any case, one can define the \emph{$\sigma$-Betti homotopy type} of the $k$-scheme $X$,
 that is the isomorphism class of the topological space underlying $X^\sigma(E)$ in the homotopy category $\htp^\tp$.
 It is instructive to determine this purely topological invariant of $k$-schemes in a few cases,
 left as exercises to the reader.
\begin{center}
\begin{tabular}{|l|l|l|}
\hline
$k$-schemes & real case & complex case \\
\hline
$\AA^n$ & $*$ & $*$ \\
$\PP^1$ & $S^1$ & $S^2$ \\
$\GG$ & $S^0$ & $S^1$ \\
$\AA^n-\{0\}$ & $S^{n-1}$ & $S^{2n-1}$ \\
$x^2=y^3$ (cuspidal curve) & * & * \\
$y^2=x^2(x+1)$ (nodal curve) & $S^1$ & $S^1$ \\
$Q_{2n-1}:=V_{\AA^{2n}}(\sum_{i=1}^n x_iy_i=1)$ & $S^{n-1}$ & $S^{2n-1}$ \\
$Q_{2n}:=V_{\AA^{2n+1}}(\sum_{i=1}^n x_iy_i=z(1+z))$ & $S^{n}$ & $S^{2n}$ \\
\hline
\end{tabular} \\
$\sigma$-Betti homotopy type
\end{center}
Therefore, with the aim to define a motivic homotopy type
 which admits both a real and complex realization,
 this table tells us two things: 
\begin{enumerate}
\item the affine line should be contractible;
\item there are several algebraic models whose motivic homotopy type should look like a sphere.
\end{enumerate}

\subsection{Naive $\AA^1$-homotopies.}

As suggested by the considerations of the previous subsection, 
 Voevodsky's fundamental idea emerges:
 to use the affine line $\AA^1_k$ to parameterize motivic homotopies.
 This echoes the notion of rational equivalence on algebraic cycles,
 already considered by Lefschetz, except one uses the projective line in the case of cycles.
 One introduces the following definition:\footnote{In \cite[p. 88-89]{MV}, it was called a \emph{strict} $\AA^1$-homotopy
 equivalence. The following terminology seems to be the more commonly used now.}
\begin{defi}\label{df:naive-A1}
One defines the \emph{naive $\AA^1$-homotopy equivalence relation} on morphisms of $k$-schemes
 as the transitive relation generated by the following symmetric and reflexive relation
 between two morphisms $f,g:Y \rightarrow X$ of algebraic $k$-varieties:
$$
\exists H:\AA^1 \times Y \rightarrow X \mid
 H \circ s_0=f, H \circ s_1=g
$$
where $s_0$ and $s_1$ are respectively the zero and unit sections of $\AA^1_k$.

A $k$-scheme $X$ is \emph{naively $\AA^1$-contractible} if it admits
 a rational point $x$ and the identity map $\Id_X$ is naively $\AA^1$-equivalent
 to the composition $X \rightarrow \spec(k) \xrightarrow x X$.
\end{defi}
In particular, both the algebraic affine $k$-variety $\AA^n_k$ 
 and the cuspidal curve $V:x^2=y^3$ are naively $\AA^1$-contractible.

This equivalence relation is compatible with composition.
 Therefore one can define the \emph{naive motivic homotopy category} as the category
 whose objects are smooth $k$-schemes and morphisms are given by naive $\AA^1$-homotopy
 equivalence classes simply denoted by $[X,Y]^N$. As in topology,
 we will see that this definition is not strong enough to define a suitable motivic homotopy
 theory (see in particular \ref{ex:naive&weak-A1}). But remarkably,
 it is already possible to make computations with this definition.
 Let us first introduce the pointed version.
\begin{defi}\label{df:pointed-naive}
A base point of a $k$-scheme is a rational point $x \in X(k)$.
 One defines \emph{naive pointed $\AA^1$-homotopy equivalence relation}
 between two pointed morphisms $f,g:(Y,y) \rightarrow (X,x)$ as above,
 but requiring that the homotopies $H(t,-)$ are all pointed maps.

We let $[(Y,y),(X,x)]_\bullet^N$ be the naive $\AA^1$-homotopy classes of pointed maps
 modulo this equivalence relation.
\end{defi}

\subsubsection{Witt monoid}\label{num:GW}
 One of the striking aspects of motivic homotopy theory,
 as discovered by Morel, is its surprising connection to the theory of quadratic forms
 --- more accurately, inner products, to accommodate characteristic $2$.
 Let us recall some basic definitions of this rich and fascinating subject
 --- the reference book \cite{MH} will be well-suited to our point of view.

The isomorphism classes of finite-dimensional non-degenerate symmetric 
 bilinear forms over $k$ form a monoid under orthogonal sum which we denote
 by $(\MW(k),+)$ and refer to as the \emph{Witt monoid}. 
 Note moreover that the tensor product induces a semi-ring structure on $\MW(k)$, denoted simply by
 $(\MW(k),+,\times)$. Let $Q(k):=k^\times/(k^\times)^2$ be the \emph{quadratic classes} of units of $k$,
 equipped with its group structure.
 Then one obtains a canonical morphism of (multiplicative) monoids: $(Q(k),\times) \rightarrow (\MW(k),\times)$
 which to a unit $u$ associates the symmetric bilinear form $u.xy$, whose isomorphism class is denoted by $\qd u$.
 Standard notations in this context are:
\begin{itemize}
\item $\qd{a_1,\hdots,a_n}=\qd{a_1}+\hdots+\qd{a_1}$, for the obvious diagonal symmetric bilinear form;
\item $h=\qd{1,-1}$, the \emph{hyperbolic form}.
\end{itemize}
One then defines two associated rings:
\begin{itemize}
\item the \emph{Witt ring} $\W(k)$ of $k$ which is the quotient of $(\MW(k),+,\times)$ with respect to the classical Witt equivalence relation; 
\item the \emph{Grothendieck-Witt ring} $\GW(k)$ of $k$ which is the group completion of the additive monoid $(\MW(k),+)$ equipped
 with the induced ring structure.
\end{itemize}
Note that one deduces from these definitions that $\W(k)=\GW(k)/(h)$.

One has two universal invariants associated with an element of $\MW(k)$,
 say given by the isomorphism class of $(V,\varphi)$:
 the \emph{rank} $\dim_k(V)$, and the \emph{discriminant} $\mathrm{disc}(V,\varphi) \in Q(k)$.

Note also that in characteristic not $2$, the monoid $(\MW(k),+)$ is cancellative,
 leading to a monomorphism: $\MW(k) \rightarrow \GW(k)$. In characteristic $2$, we let $\MW^s(k)$
 be the universal cancellative monoid associated with $\MW(k)$, so that $\MW^s(k) \rightarrow \GW(k)$
 is the universal monomorphism.

\subsubsection{Cazanave's theorem}
Let us now consider our motivic sphere $\PP^1_k$, pointed by $\infty$ according to Definition \ref{df:pointed-naive}.
 A pointed endomorphism of $\PP^1_k$ is represented by a rational function $f=\frac A B$
 where $A, B \in k[t]$ are coprime monic polynomials such that $n=\deg(A)>\deg(B)$.
 In this situation, one classically associates to $(A,B)$ the Bezout matrix $\mathrm{Bez}(A,B)$,
 which is a symmetric bilinear form whose determinant is given by $(-1)^{n(n+1)/2}.\mathrm{res}(A,B)$.
 We then define a canonical application:
\begin{equation}\label{eq:Caza}
\End_\bullet(\PP^1_k) \rightarrow \MW^s(k) \times_{Q(k)} k^\times,
f \mapsto \big(\mathrm{Bez}(A,B),(-1)^{n(n+1)/2}.\mathrm{res}(A,B)\big)
\end{equation}
The following theorem was obtained in \cite[Cor. 3.10, Th. 4.6]{Caza2}.
\begin{theo}\label{thm:Caza}
Consider the above notation. Then the above map induces a bijection of pointed sets:
$$
[\PP^1_k,\PP^1_k]_\bullet^N \rightarrow \MW^s(k) \times_{Q(k)} k^\times
$$
and in fact an isomorphism of semi-ring,
 for suitably defined addition on the left hand-side,
 multiplication being composition.
\end{theo}
This result will allow us to measure the difference between
 naive $\AA^1$-homotopies and weak $\AA^1$-homotopies;
 see Example~\ref{ex:naive&weak-A1}.

\subsubsection{Presentations}\label{num:present-MW}
The additive group of the Witt ring $(\W(k),+)$ has a well-known presentation, due to Witt,
 as the abelian group generated by $\qd{\bar u}$ for a quadratic class $\bar u \in Q(k)$
 modulo the relations:
\begin{equation}\label{eq:rel-W}
h=0, \quad  
\qd{\bar u,\bar v}=\qd{\bar u+\bar v,(\bar u+\bar v)\bar u\bar v}, \bar u, \bar v \in Q(k), u+v \neq 0
\end{equation}
One also gets the presentation of  $(\GW(k),+)$ as the abelian group genereated by $\qd{\bar u}$
 for a quadratic class $\bar u \in Q(k)$ modulo the single relations
\begin{equation}\label{eq:rel-MW}
\qd{\bar u,\bar v}=\qd{\bar u+\bar v,(\bar u+\bar v)\bar u\bar v}, \bar u, \bar v \in Q(k), u+v \neq 0
\end{equation}
One deduces from the above theorem the following very explicit computation of naive $\AA^1$-homotopy classes.
\begin{coro}
The pointed set $[\PP^1_k,\PP^1_k]_\bullet^N$ has a structure of abelian groups which 
 is generated by the symbols $(u)$ for the class of a unit $\bar u \in k^\times/(\pm 1)$,
 corresponding to the endomorphism $(x:y) \mapsto (ux:y)$,\footnote{with the convention that $(0:1)=\infty$}
 modulo the unique relation:
$$
(\bar u,\bar v)=(\bar u+\bar v,(\bar u+\bar v)\bar u\bar v), u+v \neq 0
$$
where $(\bar u,\bar v)=(\bar u)+(\bar v)$.
\end{coro}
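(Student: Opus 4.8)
The plan is to derive the statement from Cazanave's theorem (Theorem~\ref{thm:Caza}): the latter identifies $[\PP^1_k,\PP^1_k]_\bullet^N$, with its Cazanave (``pinch'') addition, with the semi-ring $\MW^s(k)\times_{Q(k)}k^\times$, so what remains is to present this object by generators and relations and to match it with the claimed presentation. First I would pin down the generators on the right-hand side. The endomorphism $(x:y)\mapsto(ux:y)$ is a pointed rational map of degree one; writing it as $A/B$ with $A$ monic of degree $1$ and $B$ a constant and computing the $1\times1$ Bezout form and $(-1)^{n(n+1)/2}\mathrm{res}(A,B)$, one sees that under \eqref{eq:Caza} it goes to the rank-one pair $\big(\qd u,\ \pm u^{\pm1}\big)$ (the exact sign and inverse being whatever the conventions produce); modulo squares the form is $\qd u$ and the unit refines its discriminant, so these are precisely the degree-one classes, to be denoted $(\bar u)$ for $\bar u\in k^\times/(\pm1)$.

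Next I would show that the $(\bar u)$ generate $\MW^s(k)\times_{Q(k)}k^\times$ under the Cazanave addition. Since every non-degenerate symmetric bilinear form diagonalizes, $\MW^s(k)$ is additively generated by the $\qd u$; given a pair $(q,c)$ with $q=\qd{a_1,\dots,a_n}$ I would form $(\bar a_1)+\dots+(\bar a_n)$, which has underlying form $q$ and whose unit coordinate equals $\mathrm{disc}(q)$ up to a square (and up to the universal sign coming from the twist in the addition); rescaling $a_n$ by a square---which changes neither the form nor the quadratic class, only the unit representative---one adjusts this last coordinate to be exactly $c$. Here one must keep precise track of how the Cazanave addition acts on the $k^\times$-coordinate, since it is not the naive product.

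Then I would identify the relations. The additive presentation \eqref{eq:rel-MW} of $\GW(k)$ says that $\qd{\bar u,\bar v}=\qd{\,\overline{u+v},\ \overline{(u+v)\bar u\bar v}\,}$ for $u+v\neq0$, and that this is the only relation; I would lift this identity to the refined generators $(\bar u)$ indexed by $k^\times/(\pm1)$, check that the $k^\times$-coordinates of the two sides agree for the Cazanave addition (this is the substantive verification), and so obtain that the stated relation $(\bar u,\bar v)=(\bar u+\bar v,(\bar u+\bar v)\bar u\bar v)$ holds. For completeness of this single relation I would argue in the other direction: the abelian group $G$ presented by the symbols $(\bar u)$ modulo it maps onto $\MW^s(k)\times_{Q(k)}k^\times$ by the generation step, and a morphism out of $G$ is determined by its effect on forms and on units separately; pushing forward to $\GW(k)$---where \eqref{eq:rel-MW} is a complete presentation---controls the form, while a direct argument controls the residual unit, whence injectivity.

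The main obstacle is the interplay between the two ``levels'' of discriminant data: the generators are indexed by $k^\times/(\pm1)$, not by the quadratic classes $Q(k)$, so one is really lifting the $\GW$-presentation \eqref{eq:rel-MW} along the surjection $k^\times/(\pm1)\twoheadrightarrow Q(k)$. One must check both that the proposed relation is \emph{well defined}---independent of the unit representatives $u,v$ chosen for the quadratic classes, which is the same delicate point underlying the classical presentations \eqref{eq:rel-W}--\eqref{eq:rel-MW}---and that it remains \emph{complete} at this finer level, which is exactly what forces one to compute Cazanave's twisted addition on the $k^\times$-coordinate rather than to rely on the naive coordinatewise product.
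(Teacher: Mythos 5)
Your plan---specialize Cazanave's bijection and transport the classical presentation of $\GW(k)$ from Paragraph~\ref{num:present-MW}---is the deduction the paper has in mind, and you rightly flag that everything hinges on tracking the $k^\times$-coordinate through Cazanave's sum. But that is exactly the step you declare ``the substantive verification'' and then do not carry out, and if you do carry it out you will see that it does not go through as asserted. The diagonalization underlying \eqref{eq:rel-MW} passes from the standard basis of $\qd{u,v}$ to $\big((1,1),(v,-u)\big)$, which multiplies the Gram determinant by the square $(u+v)^2$. Consequently, for any fixed normalization of the image of a degree-one class under \eqref{eq:Caza}, the $k^\times$-coordinates of $(u)+(v)$ and of $(u+v)+\bigl((u+v)uv\bigr)$ differ by the factor $(u+v)^2$, which is a nontrivial unit in general (already $u=v=1$ gives a discrepancy of $4$). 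The lift of the Witt relation that actually holds in $\MW^s(k)\times_{Q(k)}k^\times$ is
$$(u)+(v)=(u+v)+\Bigl(\tfrac{uv}{u+v}\Bigr),\qquad u+v\neq 0,$$
whose last symbol has the same quadratic class $\overline{(u+v)uv}$ but the representative that keeps the determinant on the nose. This finer choice is precisely the extra information the fiber product records beyond $\GW(k)$, and your proof must compute it rather than import the shape of \eqref{eq:rel-MW} verbatim.

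Your closing completeness step also cannot stand as formulated. The object $\MW^s(k)\times_{Q(k)}k^\times$ is a cancellative commutative \emph{monoid}, not a group---a rank-one form $\qd u$ has no additive inverse in $\MW^s(k)$---so an abelian group $G$ cannot surject onto it by a monoid morphism. The presentation must be taken in commutative monoids; the abelian \emph{group} on the same generators and relations is the group completion $\GW(k)\times_{Q(k)}k^\times$, which Theorem~\ref{thm:MorelP1} identifies with $[\PP^1_k,\PP^1_k]_\bullet^{\AA^1}$, not the naive classes. For the same reason you should double-check the index set: the degree-one elements of the fiber product are the pairs $(\qd c,c)$ for $c\in k^\times$, and the images of the endomorphisms attached to $u$ and $-u$ under \eqref{eq:Caza} are generically distinct, so the generators are naturally indexed by $k^\times$. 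Finally, ``a direct argument controls the residual unit, whence injectivity'' is where the real content lives: one must show that Witt's chain-equivalence theorem, which proves completeness of \eqref{eq:rel-MW} at the $Q(k)$-level, can be lifted so that every link of the chain also preserves the $k^\times$-coordinate. This is not a formality---it is nontrivial for exactly the $(u+v)^2$ reason that breaks the first step of your argument.
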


\subsection{The choice of topology}

\subsubsection{}\label{sec:advantage-Nis}
 The category of algebraic varieties is too coarse to allow for homotopy theoretic constructions
 such as path spaces, mapping cylinders, etc.
 As stated in the introduction the solution is to embed $k$-schemes
 into an appropriate \emph{topos}, that is using sheaves for an appropriate
 Grothendieck topology.

For motivic homotopy theory, the Zariski topology is not strong enough.
 Indeed, smooth $k$-varieties look like $\AA^n_k$ only \'etale locally.
 On the other hand, the \'etale topology is too strong: algebraic K-theory,
 as well as motivic cohomology, do not satisfy \'etale descent.\footnote{For motivic cohomology,
 the failure of \'etale descent is quantified by the Beilinson-Lichtenbaum conjecture,
 now a theorem of Voevodsky. See \cite[Conj. 1.16]{RiouBK},
 and also Section \ref{sec:hmot-torsion}.}
 The \emph{Nisnevich topology} is a topology intermediate between the Zariski and \'etale topologies that captures some good features
 of both.\footnote{It was introduced by Nisnevich in \cite{Nis}.}
 whose covers are given by the \'etale surjective families $(p_i:X_i \rightarrow X)_{i \in I}$
 such that for any $x \in X$, there exists $i \in I$, and $x_i \in X_i$ such that $p_i(x_i)=x$
 and the induced extension of residue fields $\kappa(x_i)/\kappa(x)$ is trivial.

Here are the main advantages of the Nisnevich topology:
\begin{enumerate}
\item the cohomology of a point (spectrum of a field) is trivial;
\item algebraic K-theory does satisfy Nisnevich descent (\cite{TT});
\item a closed immersion $Z \subset X$ of smooth $k$-schemes
 looks Nisnevich-locally like the $0$-section of an affine space.\footnote{i.e.,
 for all $x \in Z$, one gets a (non-canonical) isomorphism $X_{(x)}^h \simeq (\AA^c_{\kappa(x)})_{(0)}^h$,
 where $X_{(x)}^h$ is the Nisnevich localization of $X$ at $x$, in other words, the spectrum
 of the henselization of the local ring of $X$ at $x$.} 
\end{enumerate}
In particular, the Grothendieck site chosen for motivic homotopy theory
 is the category of smooth $k$-schemes $\Sm_k$,
 equipped with the Nisnevich topology.\footnote{The restriction to \emph{smooth} $k$-schemes is primarily justified by point (3).}
 Below we recall for the comfort of the reader a few important aspects of this particular
 Grothendieck topology (see \cite[\textsection 3.1]{MV}).

\subsubsection{Excision property} \label{sec:excision}
A (Nisnevich/elementary) \emph{distinguished square} is a cartesian square of smooth $k$-schemes
$$
\xymatrix@=10pt{
W\ar[r]\ar[d]\ar@{}|\Delta[rd] & V\ar^p[d] \\
U\ar_j[r] & X
}
$$
such that $p$ is \'etale, $Z=X-U$ is seen as a reduced closed subscheme of $X$,
 and the induced morphism $T=p^{-1}(Z) \rightarrow Z$ is an isomorphism.

In fact, the family of morphisms of the form $(p,j)$ attached to a distinguished square
 as above generates the Nisnevich topology.
 In this situation, one deduces by taking quotient a morphism of pointed sheaves,\footnote{Indeed,
 this type of quotients always gives canonically pointed objects
 by the map $*=U/U \rightarrow X/U$ of sheaves of sets.}
$$
X/U=X/(X-Z) \rightarrow V/(V-T)=V/W.
$$
which can be seen to be an isomorphism.
 This is the so-called \emph{excision property}.
 
\begin{exem}
\begin{enumerate}
\item The obvious inclusion induces an isomorphism $\AA^1/\GG \rightarrow \PP^1/\AA^1$ of pointed sheaves.
\item Let $Z \rightarrow X$ be a closed immersion of smooth $k$-schemes.
 Assume there exists an étale morphism $p:X \rightarrow \AA^n_k$ such that $Z=p^{-1}(\AA^{c}_k)$.
 Then there exist an open subscheme $\Omega \subset (X  \times_{\AA^n_k} \AA^{c}_Z)$ containing $Z$ as a closed subscheme
 and such that the obvious projection map induces isomorphism of pointed sheaves:
$$
X/(X-Z) \xleftarrow\sim \Omega/(\Omega-Z) \xrightarrow \sim \AA^{c}_Z/(\AA^{c}_Z-Z).
$$
This property gives another concrete interpretation of point 3. in Section \ref{sec:advantage-Nis}.
\end{enumerate}
\end{exem}

\subsubsection{Points}\label{num:points}
A point of the Nisnevich site $\Sm_k$ is given by a pair $(X,x)$ where $X$ is a smooth $k$-scheme
 and $x \in X$ an element of the underlying set.
 The \emph{fiber} of a presheaf/sheaf $F$ on $\Sm_k$ at the point $(X,x)$ is defined as
$$
F_{X,x}=\ilim{V/X} F(V)
$$
where $V$ ranges over the Nisnevich neighborhoods of $x$ in $X$.\footnote{i.e.,
 the étale $X$-scheme with a point $v \in V$ over $x$
 such that the residual extension $\kappa(v)/\kappa(x)$ is trivial.}
 This defines a \emph{fiber functor} $F \mapsto F_{X,x}$, i.e., commutes with colimits and finite limits.
 The family of these fiber functors is \emph{conservative} on the category
 of sheaves of sets on $\Sm_k$: it preserves and detects isomorphisms (and monomorphisms).

Let $E/k$ be a finitely generated separable extension of $k$.
 Note that $E$ is the filtering colimit of its sub-$k$-algebra $A \subset E$
 with $A/k$ smooth (of finite type).
 Given any (pre)sheaf $F$ over $\Sm_k$, we put:
$$
F(E):=\ilim{A\subset E} F(\spec(A)).
$$
Obviously, given any smooth connected $k$-scheme $X$ with generic point $\eta$,
 and an isomorphism $E \simeq \kappa(X)=\cO_{X,\eta}^h$,
 one gets a canonical isomorphism $F(E) \simeq F_{X,\eta}$,
 so that $F \mapsto F(E)$ is a fiber functor of the Nisnevich site on $\Sm_k$.

\subsection{Tools from higher homotopy theory}\label{sec:tool-htp}

This section can be avoided on a first reading.
 Motivic homotopy relies on two theoretical tools in order
 to study localized categories --- an operation introduced in \cite{GZ} --- 
 along a set of morphisms: model categories and $\infty$-categories.
 Nowadays, model categories are often viewed as presentations
 of $\infty$-categories. The $\infty$-categorical perspective is therefore
 more synthetic and elegant. However, since most of the current literature in motivic homotopy
 theory still uses model categories, we have tried to provide the reader with sufficient tools
 to navigate between the two points of view.
 
\subsubsection{Model categories}
Model categories were introduced in \cite{Quillen},
 as a generalization of the homological algebra of Cartan and Eilenberg,
 and as a synthesis between the homotopy theories of topological spaces
 and simplicial sets (after Daniel Kan).
 They became the central tool for all generalizations of homotopy theory
 in other context than topology.
 This is particularly the case of motivic homotopy,
 which was developed using a particular model category on simplicial Nisnevich sheaves.

Model categories are designed to study categories obtained
 by formally inverting a collection $\mathscr W$ of morphisms, generically called \emph{weak equivalences},
 in a category whose objects are considered as \emph{models}.
 The main idea is to build two kinds of resolutions, called \emph{fibrant} (analogue of injective) and \emph{cofibrant} (analogue of projective),
 out of the so-called small object argument\footnote{which actually originally appeared in \cite{CE}
 to prove the existence of injective resolutions.}
 The central result is that morphisms in the localized category from a cofibrant object to a fibrant object can be computed
 by classes of morphisms in the original category up to an explicit equivalence relation.
 To achieve this, one requires the existence of two collections $\mathcal Fib$ and $\mathcal Cof$
 of morphisms called \emph{fibrations} and \emph{cofibrations}
 respectively. They are bound to satisfy a clear and simple axiomatic that draw inspiration from
 both the properties of injective and projective objects in homological algebra,
 and from the \emph{homotopy lifting property} that characterizes fibrations in topology.\footnote{In fact,
 according to the usual abuse of terminology, we call model category what Quillen defined
 as a \emph{closed model category} in \cite[I.5 Definition 1]{Quillen}.}
 An important tool of model categories is that, under appropriate assumptions, they can be localized,
 by adding more weak equivalences. We will review this procedure below.

\subsubsection{$\infty$-categories}\label{sec:infty-cat}
After developing his theory, Quillen had the major intuition that model categories
 concealed a deeper structure:
\begin{quote}[p. 0.4 of \cite{Quillen}]
Presumably there is a higher order structure ([8], [17]) on the homotopy
 category which forms the part of the homotopy theory of a model category,
 but we have not been able to find an inclusive general definition of this structure
 with the propery that this structure is preserved when there are adjoint functors which 
 establish an equivalence of homotopy theories.
\end{quote}
The said \emph{higher order structure} took many years to be uncovered.
 It is the theory of \textbf{$\infty$-categories}, due to many mathematicians 
 and which was presented at the Bourbaki seminar by \cite{Cisinski}.
 As in \emph{op. cit.}, we take \cite{LurieHTT} as a reference --- therefore,
 we use Joyal's theory of \emph{quasicategories} as a model for $\infty$-categories.
 Here are the important points to keep in mind:
\begin{itemize}
\item Any category admits an associated $\infty$-category via the nerve functor (see \textsection 2 in \cite{Cisinski}).
\item Many (if not all) classical constructions and concepts from category theory extends to $\infty$-categories:
 adjoint functors, equivalences, Kan extensions, limits/colimits, pro/ind-objects,
 (co)fibred category, etc.
\item Every $\infty$-category $\mathscr C$ admits a mapping space functor denoted by
 $\Map_{\mathscr C}(X,Y)$ for objects $X$ and $Y$. In fact, one can view any $\infty$-category
 as a simplicial category; see \textsection 12, and Theorem 12.4, in \cite{Cisinski}.
\end{itemize}

\subsubsection{Localizations}\label{sec:abstract-loc}
In both model categories and $\infty$-categories,
 the fundamental operation is that of localization, the procedure of adding
 invertible morphisms.

Under a suitable assumption, there is a context, seemingly first formulated by Bousfield,
 in which this localization procedure is more structured and better suited for computations.
 This corresponds to the property of being \emph{combinatorial} for model categories, and \emph{presentable}
 for $\infty$-categories.

Let us describe this particular type of localization for $\infty$-categories,
 called \emph{left localization} in \cite{Cisinski}. Let $\iC$ be a presentable
 $\infty$-category and $W$ be a set of morphisms. One introduces the following definitions:
\begin{itemize}
\item an object $X$ of $\iC$ is \emph{$W$-local} if for any $f_0 \in W$,
 the map $\Map_{\iC}(f_0,X)$ is a weak equivalence;
\item a morphism $f$ on $\iC$ is a \emph{$W$-local weak equivalences} (or simply \emph{weak $W$-equivalence})
 if for any $W$-local object $X$, the map $\Map_{\iC}(f,X)$ is a weak equivalence.
\end{itemize}
Letting $\bar W$ be the set of weak $W$-equivalences,
 there exists a functor $\pi:\iC \rightarrow \iC[\bar W^{-1}]$
 of $\infty$-categories\footnote{satisfying the usual universal property, and in particular unique in the $\infty$-categorical
 sense;} which admits a right adjoint $\nu$ which is \emph{fully faithful}
 and with essential image the $W$-local objects. We put $L_W=\nu \circ \pi$ and call it
 the \emph{$W$-localization functor}.

\begin{exem}\label{ex:infty-topos}
The main example for us comes from \emph{$\infty$-topoi} which are left localizations
 of the presentable $\infty$-category of presheaves on some Grothendieck site (see \cite{Cisinski}):
 we will work with the \textbf{$\infty$-topos of Nisnevich sheaves on the site $\Sm_k$ of smooth $k$-schemes}.
\end{exem}

\subsubsection{Monoidal structures}\label{sec:monoid-ifty}
Among the many advantages of $\infty$-categories, it is possible to transport
 all the definitions and constructions from the theory of monoidal categories.
 As $\infty$-categories must always encode all the higher structures in a precise way,
 the theory has its roots in the point of view of operads.
 We refer the reader to the excellent account of \cite[\textsection 3, 4]{GrothI}
 for the definitions of symmetric monoidal $\infty$-category, and commutative monoid
 objects in them.

\section{Unstable motivic homotopy}

\subsection{Homotopy theory of Nisnevich sheaves}

\subsubsection{$k$-Spaces}
Our basic ``homotopical objects'' will be the simplicial presheaves on the category $\Sm_k$:
$$
\cX:(\Sm_k)^{op} \rightarrow \sS,
$$
simply called \emph{$k$-spaces}. The corresponding category,
 with morphisms the natural transformations, is denoted by $\PSh(k,\sS)$.
 Note that it contains as a full subcategory the category of Nisnevich simplicial
 sheaves $\Sh(k,\sS)$. Both categories will serve as \emph{models} for motivic homotopy types.

\begin{exem}
\begin{enumerate}
\item Let $X$ be an arbitrary $k$-schemes.
 Then the (pre)sheaf it represents $X(-)=\Hom(-,X)$ can be seen as a discrete simplicial sheaf,
 and therefore as a (discrete) $k$-space.
 When $X$ is a smooth $k$-scheme, we will abusively denote it by $X$.\footnote{This is harmless because of the Yoneda lemma.
 Beware however that it is different  for singular schemes:
 as an example, if $X$ is a non reduced $k$-scheme, with reduction $X_{red}$,
 the nil-immersion $\nu:X_{red} \rightarrow X$ induces an isomorphism of $k$-spaces
 $\nu_*:X_{red}(-) \rightarrow X(-)$.} 
\item Given an arbitrary simplicial set $E_\bullet$, one can consider
 the constant simplicial presheaf $U/S \mapsto E_\bullet$ and view it as a $k$-space.
 We will abusively denote it by $E_\bullet$.
\end{enumerate}
\end{exem}

\subsubsection{Pointed $k$-spaces}\label{num:pointed&smash}
According to the notation of the previous example,
 the point $*$ seen as a $k$-space coincide with $\spec(k)$.
 It is the final object of the category of $k$-spaces.
 As in topology, a base point of $\cX$ is a map $x:* \rightarrow \cX$,
 and we say that $(\cX,x)$ is a pointed space. 
 Given a $k$-space $\cX$, we also denote by $\cX_+$ the pointed $k$-space
 with a free base point added.
As in topology, one defines a natural symmetric monoidal structure
 on pointed $k$-spaces whose product is the \emph{smash product}:
$$
\cX \wedge \cY=\cX \times \cY/\lbrack(* \times \cY) \cup (\cX \times *)\rbrack.
$$
Similarly, the coproduct in pointed $k$-spaces is denoted by $\vee$,
 and called the \emph{wedge product}.

\begin{exem} (See \cite[\textsection 4]{JT}, \cite[\textsection 4.1, p. 123, 128]{MV}))\label{ex:BG}
Let $G$ be a smooth group scheme over $k$. We define the pointed $k$-space $BG$ as the presheaf:
$$
U \mapsto B\big(G(U)\big)
$$
where $G(U)$ is seen as a group and $B(-)$ denotes the usual simplicial classifying space construction.
 The base point of $BG$ is given by the identity element $e \in G(k)=BG_0(k)$.
 Note that $BG$ is clearly a simplicial sheaf.

In fact, this construction of classifying spaces in a topos appears for the first time in \cite[\textsection 4]{JT},
 and makes sense for any sheaf of groups --- actually any sheaf of groupoids in \emph{loc. cit.}
 This pioneering work of Joyal and Tierney is part of the long story for the quest for $\infty$-stacks.
\end{exem}
 
\subsubsection{Weak equivalences}
As explained in the introduction, one can do homotopy theory using simplicial (pre)sheaves.
 This was originally introduced by \cite{Illusie}. The reader can find a thorough account in \cite{Jardine}.
 Here is the particular case of this theory that is relevant to us.
\begin{defi}\label{df:local-we}
A morphism of $k$-spaces $\cY \rightarrow \cX$ is a (Nisnevich-)local weak equivalence\footnote{Illusie
 says \emph{quasi-isomorphism} in \emph{op. cit.} This terminology has been replaced by the present one
 in the second period of study of simplicial sheaves, after Heller, Joyal and Jardine.}
 if for any Nisnevich point $(X,x)$ on $\Sm_k$, the induced morphisms
 $\cY_{X,x} \rightarrow \cX_{X,x}$
 on the fiber at $(X,x)$ is a weak equivalence of simplicial sets.
\end{defi}
\begin{rema}
A \emph{global} weak equivalence is a morphism of $k$-spaces $\cY \rightarrow \cX$
 such that for all smooth $k$-schemes $U$, the induced morphism $\cY(U) \rightarrow \cX(U)$
 on global sections over $U$ is a weak equivalence. This terminology is due to Jardine. 
\end{rema}

\subsubsection{Homotopy sheaves}\label{sec:local-htp-shv}
Let $\cX$ be a $k$-space (resp. $(\cX,x)$ be a pointed $k$-space). One defines its $0$-th (resp. $n$-th) homotopy sheaf
 as the (Nisnevich) sheaf associated with the presheaf of pointed sets (resp. groups if $n=1$, abelian groups if $n>1$)
$$
U \mapsto \pi_0\big(\cX(U)\big), \text{ resp. } \pi_n\big(\cX(U),x_U\big).
$$
We say that a map of pointed $k$-spaces $f:(\cY,y) \rightarrow (\cX,x)$ is a local weak equivalence
 if it is so after forgetting the base points.
 Then it induces an isomorphisms on homotopy sheaves for any $n \geq 0$
\begin{equation}\label{eq:functoriality_upi}
f_*:\upi_n(\cY,y) \rightarrow \upi_n(\cX,x).
\end{equation}

\begin{rema}
One should be careful, however, that the latter condition is not enough to guarantee that $f$ is a Nisnevich weak equivalence
 in the sense of Definition~\ref{df:local-we}.
 In fact, one must consider all possible choices of base points; see \cite[\textsection 2.2.1]{Illusie}.

Here is a particular case where this issue does not arise.
 We will say that a $k$-space $\cX$ is \emph{locally connected} if $\upi_0(\cX)$ is the constant sheaf.
 Then a pointed morphism $f:(\cY,y) \rightarrow (\cX,x)$ between locally connected $k$-spaces
 is a weak equivalence if and only if the maps \eqref{eq:functoriality_upi} are isomorphisms for all $n>0$.
\end{rema}

\begin{exem}
Let $G$ be a smooth group scheme over $k$. Then, as expected, one deduces from its construction that $BG$
 is locally connected. Moreover, one gets:
$$
\pi_n(BG,e)=\begin{cases}
G & n=1, \\
* & n \neq 1.
\end{cases}
$$
In other words, the $k$-space $BG$ is a ``$K(G,1)$''.
\end{exem}

\subsubsection{Associated homotopy category}
As in \cite[Def. 2.3.5]{Illusie}, one introduces\footnote{see also \emph{op. cit.} Theorem 2.3.6 for an equivalent construction using Gabriel-Zisman calculus of fractions}:
\begin{defi}
We define the \emph{Nisnevich homotopy category} $\htp^\nis(k)$ (resp. \emph{Nisnevich pointed homotopy category} $\htp_*^\nis(k)$)
 as the localization of the category of $k$-spaces (resp. pointed $k$-spaces)
 with respect to the local weak equivalences.
\end{defi}
Morphisms between two $k$-spaces $\cX$ and $\cY$ in $\htp^\nis(k)$ are simply called weak homotopy classes and denoted by
$$
[\cX,\cY]^\nis:=\Hom_{\htp_*^\nis(k)}(\cX,\cY).
$$
We similarly denote by $[(\cX,x),(\cY,y)]^\nis_\bullet$ in the pointed case.

\begin{exem}\label{ex:BG-comput}
\begin{enumerate}
\item Let $X$ and $Y$ be two smooth $k$-schemes. Then the canonical map
 $\Hom_k(X,Y) \rightarrow [X,Y]^\nis$ is a bijection,
 reflecting the fact that $X$ and $Y$ are discrete $k$-spaces. In particular,
 local weak equivalences are insensitive to the geometry of the underlying smooth $k$-schemes.
 For $n>0$, we further get:
$$
[S^n \wedge X_+,Y_+]^\nis=*.
$$
\item Let $G$ be a smooth group scheme over $k$.
 Then for any smooth $k$-scheme $X$ and any integer $n \geq 0$, one gets
 using the notation from \ref{ex:BG}:
$$
[S^n \wedge X_+,BG]^\nis=\begin{cases}
H^1_\nis(X,G) & n=0, \\
G(X) & n=1, \\
0 & n>1.
\end{cases} 
$$
On the first line, the right-hand side denotes the set of Nisnevich-local $G$-torsors on $X$.
 The second point can be reformulated 
 by saying that $\Omega BG=G$, where $\Omega$ is the loop space functor extended to $k$-spaces.
 We refer the reader to \cite[p. 120]{MV} for more details.
\end{enumerate}
\end{exem}

\begin{rema}
In the homotopy category $\htp^\nis(k)$, one can always identify a simplicial presheaf $\cX$ with its
 associated Nisnevich sheaf $a(\cX)$. Indeed, the canonical map
$$
\cX \rightarrow a(\cX)
$$
is readily seen to be a local weak equivalence. This justifies our choice to refer to a $k$-space as a simplicial presheaf,
 although the reader is free to work simplicial (Nisnevich) sheaves instead --- a choice that was
 in fact adopted in \cite{MV}.
\end{rema}

\subsubsection{Computational tools} As mentioned in Section \ref{sec:tool-htp},
 one can enhance the homotopy category $\htp^\nis(k)$ with several model category
 structures,\footnote{\label{fn:model-Nisnevich} One can retain two such model structures:
\begin{itemize}
\item The \emph{Joyal model structure}, equivalently \emph{injective Nisnevich-local model structure} used by Morel
 and Voevodsky: the base category is that of $k$-spaces that are Nisnevich sheaves,
 cofibrations are monomorphisms, weak equivalence are local weak equivalences, and fibrations
 are the maps with the RLP with respect to the acyclic cofibrations.
\item The \emph{Blander model structure}, equivalently \emph{Nisnevich-localized projective model structure}
 first considered in \cite{Blander} in this context:
 the base category is that of $k$-spaces, cofibrations are the maps with the LLP
 with respect to epimorphisms that are term-wise weak-equivalences, fibrations are the maps
 with the LLP with respect to the acyclic cofibrations. 
\end{itemize}
The advantage of the second model structure is that all representable $k$-spaces
 are cofibrant and a $k$-space is fibrant if and only if it is term-wise
 a Kan complex and is Nisnevich excisive in the sense of Definition \ref{df:BG-ppty}.}
 and a canonical $\infty$-categorical structure simply denoted here by $\ihtp^{\nis}(k)$,
 encompassing in the general notion of $\infty$-topos; see \ref{sec:infty-Nis-topos} below.
 One of the advantages of these structures is to
 allow the definition of a (Nisnevich-local) mapping space between $k$-spaces (resp. pointed $k$-spaces)
 $\cX$ and $\cY$:
$$
\Map^\nis(\cX,\cY) \text{ resp.} \Map^\nis_\bullet(\cX,\cY).
$$
It has the important property that:
$$
\pi_0\Map^\nis(\cX,\cY)=[\cX,\cY]^\nis
 \text{ resp. } \pi_i\Map^\nis_\bullet(\cX,\cY)=[S^i \wedge \cX,\cY]^\nis_\bullet.
$$

\begin{rema}
\begin{enumerate}
\item From the model category perspective, the mapping space is obtained by deriving
 an appropriate basic mapping space functor.\footnote{One says that the model category is
 \emph{simplicial}, see \cite[II.2, Definition 2]{Quillen}.}
 For example, with Morel and Voevodsky's
 notation, one puts for any $k$-spaces $\cX$ and $\cY$
$$
\Map^\nis(\cX,\cY)=S(\cX_c,\cY_f)
$$
where $\cX_c$ (resp. $\cY_f$) is a cofibrant (resp. fibrant) resolution of the sheaf associated with
 $\cX$ (resp. $\cY$) for the Nisnevich local injective model structure that they use.
\item From the $\infty$-categorical perspective, the mapping space comes readily
 out of the $\infty$-categorical structure (as explained in Section \ref{sec:infty-cat}).
 Moreover, we can view the $\infty$-category $\ihtp^\nis(k)$ as a simplicial
 category (as any $\infty$-category, see \cite[\textsection 12, Th. 12.4]{Cisinski}).
\end{enumerate}
\end{rema}

\begin{defi}\label{df:BG-ppty}
Let $\cX$ be a $k$-space. After Morel and Voevodsky, we say that $\cX$ is \emph{Nisnevich excisive}\footnote{Morel
 and Voevodsky originally used the term ``B.G.-property'', which appears to have been replaced in later literature
 by the terminology adopted here;}
 if $\cX(\varnothing)$ is contractible\footnote{This will be automatic if $\cX$ is a Nisnevich sheaf.}
 and for any distinguished square $\Delta$ as in Section \ref{sec:excision}, the resulting square of simplicial sets
$$
\xymatrix@=10pt{
\cX(X)\ar^{p^*}[r]\ar_{j^*}[d] & \cX(V)\ar[d] \\
\cX(U)\ar[r] & \cX(W)
}
$$
is homotopy cartesian (i.e., cartesian in the $\infty$-category $\ihtp$).
\end{defi}
This is a kind of ``homotopical excision property''.\footnote{For example,
 one can interpret it by saying that the induced map on the homotopy fibers of the vertical map with respect to any choice
 of base point in $\cX(W)$ is a weak equivalence. Indeed, these homotopy fibers are respectively the derived
 sections of $\cX$ at $X$ with support in $(X-U)$ and at $V$ with support in $(V-W)$.}
 It was first considered by Brown and Gersten, but for the Zariski topology (i.e., $V \rightarrow X$ is an open immersion).
 In practice, it is mostly used by applying the following proposition.
\begin{prop}
Let $\cX$ be a $k$-space which is Nisnevich excisive.
 Then for any smooth $k$-scheme $X$, the canonical map
$$
\cX(X) \rightarrow \Map^\nis(X,\cX)
$$
is a weak equivalence. In particular,
$$
[X,\cX]^\nis=\pi_0\big(\cX(X)\big)
$$
and when $\cX$ is pointed,
$$
[S^i \wedge X_+,\cX]^\nis_\bullet=\pi_i\big(\cX(X)\big).
$$
\end{prop}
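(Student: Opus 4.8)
The plan is to build the mapping space out of an explicit cofibrant–fibrant model in the Blander (projective Nisnevich-local) model structure, where representables are cofibrant, and to identify the fibrant objects with the Nisnevich-excisive termwise-Kan $k$-spaces, as recorded in the footnote to the ``Computational tools'' paragraph. First I would invoke that description: the map $\cX(X)\to\Map^\nis(X,\cX)$ is by definition $\cX(X)\to S(X,\cX_f)$ for a fibrant resolution $\cX_f$ of $\cX$ (using that $X$, being representable, is already cofibrant, so no cofibrant resolution is needed), and by the simplicial enrichment $S(X,\cX_f)=\cX_f(X)$ by the Yoneda lemma at the simplicial level. So the statement reduces to showing that the canonical map $\cX(X)\to\cX_f(X)$, i.e.\ the evaluation at $X$ of a local weak equivalence $\cX\to\cX_f$ with $\cX_f$ fibrant, is a weak equivalence of simplicial sets whenever $\cX$ is itself Nisnevich excisive and termwise Kan.

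The heart of the matter is therefore a descent statement: a Nisnevich-local weak equivalence between two Nisnevich-excisive, termwise-Kan $k$-spaces is automatically a \emph{global} weak equivalence, i.e.\ a sectionwise weak equivalence. This is the Nisnevich-topology incarnation of the Brown–Gersten argument. The key steps are: (1) reduce to showing that any Nisnevich-excisive termwise-Kan $k$-space $\cX$ satisfies Nisnevich descent, meaning $\cX(X)\simeq\holim$ over the Čech-type diagrams of a Nisnevich cover, and then that two such objects with a local weak equivalence between them have the same sections; (2) prove this by Noetherian induction on the dimension of $X$ (here one uses that smooth $k$-schemes have finite Krull dimension and that the Nisnevich topology has finite cohomological dimension), the inductive step being exactly the statement that the square attached to a distinguished square (which generates the topology, per Section~\ref{sec:excision}) is homotopy cartesian — which is the defining excision property of Definition~\ref{df:BG-ppty}; (3) combine with the fact that the stalks $\cX_{X,x}\to(\cX_f)_{X,x}$ are weak equivalences (that is what ``local weak equivalence'' means) and that filtered colimits of simplicial sets preserve weak equivalences, to run the induction. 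This is precisely the content of Morel–Voevodsky \cite[\textsection 3.1, in particular Lemma 1.16 and Prop.\ 1.18]{MV}, which I would cite for the technical core rather than reproduce.

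The main obstacle is step (2): making the Noetherian induction on dimension actually close up. The subtlety is that excision as stated only controls \emph{elementary} distinguished squares, while a general Nisnevich cover must be refined into a finite composite of such squares; one needs the fact (also from \cite[\textsection 3.1]{MV}) that Nisnevich covers admit ``splitting sequences'' by reduced closed subschemes, so that the descent spectral sequence (or the iterated homotopy-cartesian squares) degenerates after finitely many steps because the relevant closed strata drop in dimension. Once that combinatorial input is in place, the homotopy limit comparison is formal. Granting all this, the displayed consequences are immediate: $[X,\cX]^\nis=\pi_0\Map^\nis(X,\cX)=\pi_0(\cX(X))$ by the stated property of $\Map^\nis$, and in the pointed case $[S^i\wedge X_+,\cX]^\nis_\bullet=\pi_i\Map^\nis_\bullet(X_+,\cX)=\pi_i(\cX(X))$, using $\Map^\nis_\bullet(X_+,\cX)\simeq\Map^\nis(X,\cX)$ and the first part.
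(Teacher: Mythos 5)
Your approach is essentially the one the paper has in mind: the proof given in the text is merely a pointer to Blander's Theorem~1.6 and Lemmas~1.8 and~4.1 (and, alternatively, to an $\infty$-categorical result of Lurie), and your sketch — reduce via the simplicial enrichment and the Yoneda identification $\Map^\nis(X,\cX)=S(X,\cX_f)=\cX_f(X)$ to a descent statement, then prove that statement by a Brown--Gersten induction on dimension using splitting sequences for Nisnevich covers — is precisely what sits behind those citations.

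There is one genuine gap. After the Yoneda reduction you silently upgrade the hypothesis to ``$\cX$ is Nisnevich excisive \emph{and} termwise Kan'', which is what fibrancy in the Blander structure amounts to, but the proposition only assumes Nisnevich excisiveness. You need to first replace $\cX$ by a termwise Kan replacement such as $\mathrm{Ex}^\infty\cX$ applied sectionwise: the map $\cX\to\mathrm{Ex}^\infty\cX$ is a global (sectionwise) weak equivalence, so it preserves $\cX(X)$ and $\Map^\nis(X,\cX)$ up to weak equivalence, and it also preserves the Nisnevich excisive condition, since that condition only asks certain squares of simplicial sets to be homotopy cartesian, a property invariant under levelwise weak equivalence. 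After this harmless reduction $\mathrm{Ex}^\infty\cX$ is honestly fibrant and the rest of your argument goes through. A cosmetic point: the intended Morel--Voevodsky references in $\S 3.1$ should presumably carry the section prefix (Lemma~3.1.16 and Proposition~3.1.18 in their numbering) rather than ``Lemma~1.16'' and ``Prop.~1.18''.
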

\begin{proof}
This can be derived from the Blander model structure on $k$-spaces
 mentioned in footnote \ref{fn:model-Nisnevich} (use \cite{Blander}, Theorem 1.6, Lemmas 1.8 and 4.1).
For an $\infty$-categorical proof, we refer the reader to \cite[Th. 2.9]{LurieDAG11}.
\end{proof}

\begin{exem}
\begin{enumerate}
\item Let $X$ be an eventually singular $k$-scheme. The associated $k$-space $X(-)$
 is obviously Nisnevich excisive, as it is a Nisnevich sheaf.
\item Given a $k$-space $\cX$, the Godement resolution 
 provides a weak equivalence $\cX \rightarrow G(\cX)$,
 where the target is a Nisnevich excisive $k$-space
 (see \cite[Prop. 3.3]{Jardine} or \cite[\textsection 1, 1.66]{MV}).
 We call $G$ the \emph{Godement simplicial resolution functor}.
\end{enumerate}
\end{exem}

\subsubsection{The $\infty$-categorical description}\label{sec:infty-Nis-topos}
We can restate the previous constructions in light of the tools from higher category theory.
 The category $\htp^\nis(k)$ is the homotopy category associated with the $\infty$-topos
 $\ihtp^\nis(k)$ of Nisnevich sheaves over $\Sm_k$.

Let us recall the construction of the latter.
 We start from the $\infty$-category of presheaves on $\Sm_k$,
 $\iPSh(\Sm_k)=\Fun((\Sm_k)^{\op},\iS)$ (see \cite[\textsection 14]{Cisinski}).
 The objects of this category are genuinely $k$-spaces in
 the sense introduced earlier; the only difference from the ordinary category $\PSh(k)$
 is the presence of higher morphisms.\footnote{Recall that there is an $\infty$-functor
 $\mathrm N\PSh(k) \rightarrow \iPSh(k)$, allowing us to view (ordinary) morphisms of $k$-spaces
 as $1$-morphisms in $\iPSh(k)$.}
 The $\infty$-category $\ihtp^\nis(k)$ is obtained by left localization with respect to local weak equivalences
 as defined above. In particular, we have a notion of local objects --- these are simply called ($\infty$-)sheaves ---
 and a localization endofunctor $L_\nis$ of $\iPSh(k)$.
 In fact, a $k$-space $\cX$ is local if and only if it is Nisnevich excisive.
 An explicit model of the localization functor $L_\nis$ is given by the Godement simplicial functor
 described above.

Note finally that we can formulate the excision property in $\ihtp^\nis(k)$
 by saying that any distinguished square $\Delta$ induces a homotopy cartesian\footnote{This really
 means cartesian in the $\infty$-categorical sense. We use this expression to avoid possible confusions
 with the same notion in the underlying category of simplicial sheaves}
 square in $\ihtp^\nis(k)$.
 This amounts to say that the canonical map
$$
V//W \rightarrow X//U
$$
is a local weak equivalence, i.e., an isomorphism in the $\infty$-category $\ihtp^\nis(k)$.
 Here we have denoted by $X//U$ the homotopy cofiber\footnote{This really means the quotient/cokernel
 in the $\infty$-categorical sense. The same remark as in the previous footnote is in order: we use this expression
 to avoid possible confusions.}
 of the map $U \rightarrow X$.

\subsection{$\AA^1$-localization} 

Having prepared the ground in the preceding sections,
 we can now apply Voevodsky's main idea of using the affine line
 as an interval for doing homotopy with algebraic varieties over $k$.
\begin{defi}\label{df:A1-local}
A $k$-space $\cX$ will be called \emph{$\AA^1$-local} if for any smooth $k$-scheme $U$,
 the following map of spaces induced by the obvious projection is a weak equivalence:
$$
p^*:\Map^\nis(U,\cX) \rightarrow \Map^\nis(\AA^1_U,\cX).
$$
We say that a morphism of $k$-spaces $f:\cY \rightarrow \cX$ is
 a \emph{weak $\AA^1$-equivalence} if for any $\AA^1$-local $k$-space $\cZ$,
 the following map of spaces is a weak equivalence:
$$
f^*:\Map^\nis(\cX,\cZ) \rightarrow \Map^\nis(\cY,\cZ).
$$
We define the $\AA^1$-homotopy category $\htp(k)$ over $k$ as the localization
 of the (Nisnevich) homotopy category $\htp^\nis(k)$ of $k$-spaces
 with respect to weak $\AA^1$-equivalences.
 One also denotes by
$$
[\cX,\cY]^{\AA^1}
$$
the morphisms in $\htp(k)$, called weak $\AA^1$-homotopy classes.
 One defines similarly the corresponding pointed category,
 denoted by  $\htp_\bullet(k)$.
\end{defi}
This definition follows the classical pattern of left Bousfield localization.\footnote{This
 is presented in abstract terms in \ref{sec:abstract-loc}.}
 In particular, weak $\AA^1$-equivalences are
 exactly the morphisms of $k$-spaces that become isomorphisms in the
 motivic homotopy category $\htp(k)$.
 It is clear that naive $\AA^1$-equivalences (Definition~\ref{df:naive-A1})
 are a special case.
 We will see in Example \ref{ex:naive&weak-A1} that this inclusion is strict.

\begin{exem}\textbf{$\AA^1$-Local objects.}\label{ex:A1-local}
One considers the notation of Example \ref{ex:BG-comput}.
\begin{enumerate}
\item Let $X$ be a smooth $k$-scheme. One says that $X$ is \emph{$\AA^1$-rigid} if the associated $k$-space
 is $\AA^1$-invariant: for any smooth $k$-scheme $Y$, the application $$\Hom(Y,X) \rightarrow \Hom(\AA^1_Y,X)$$ is a bijection.
 Canonical examples are: $\GG$, abelian varieties, smooth proper curves.

If the $k$-scheme $X$ is $\AA^1$-rigid, one deduces that the $k$-space $X$ is $\AA^1$-local
 --- use Example~\ref{ex:BG-comput}(1).
 One further deduces that for any smooth $k$-scheme $Y$,
 the following canonical application is a bijection:
$$
\Hom(Y,X) \rightarrow [Y,X]^{\AA^1}.
$$
In particular, the $\AA^1$-rigid smooth $k$-schemes are \emph{discrete} from the point of view of motivic homotopy:
 $\AA^1$-weak equivalences between them are exactly the isomorphisms of $k$-schemes.
\item A more interesting example of an $\AA^1$-local $k$-space
 is provided by the classifying $k$-space $B\GG$ associated with the multiplicative group.
 This follows from the above definition and Example \ref{ex:BG-comput}(2), and yields the computation
$$
[S^n \wedge X_+,B\GG]^{\AA^1}=\begin{cases}
\Pic(X) & n=0 \\
\cO(X)^\times & n=1 \\
0 & n>1.
\end{cases}
$$
The attentive reader will recognize familiar motivic cohomology groups.
 In fact, $B\GG$ is the first example of a \emph{motivic Eilenberg-MacLane space}:
 $B\GG=K(\ZZ(1),1)$.
\end{enumerate}
\end{exem}

\subsubsection{Computational tools}
As explained in Section \ref{sec:tool-htp}, the motivic homotopy category
 admits the $\AA^1$-localized versions of the adequate model structures on $k$-spaces.\footnote{Let us be explicit
 and extend footnote \ref{fn:model-Nisnevich} page \pageref{fn:model-Nisnevich}.
 One defines two model structures whose homotopy category is $\htp(k)$:
\begin{itemize}
\item The \emph{$\AA^1$-localized Joyal model structure}:
 the base category is that of $k$-spaces that are Nisnevich sheaves,
 cofibrations are monomorphisms, weak equivalences are weak $\AA^1$-equivalences, and fibrations
 are the maps with the RLP with respect to the $\AA^1$-acyclic cofibrations.
\item The \emph{$\AA^1$-localized Blander model structure}:
 the base category is that of $k$-spaces, cofibrations are the maps with the LLP
 with respect to epimorphisms that are term-wise Nisnevich-local weak-equivalences,
 fibrations are the maps with the RLP with respect to the $\AA^1$-acyclic cofibrations. 
\end{itemize}
The first model structure is the one used in \cite{MV}, and in most of the works on motivic homotopy categories.
 The second model structure has the advantage that representable $k$-spaces are cofibrant objects,
 fibrant $k$-spaces are $k$-spaces (simplicial presheaves) $\cX$
 which are Nisnevich excisive, termwise Kan complexes, and such that $\cX(X) \rightarrow \cX(\AA^1_X)$
 are weak equivalences (of Kan complexes) for each smooth $k$-scheme $X$. Also, the two pairs of adjoint functors,
 $(f^*,f_*)$ and $(p_\sharp,p^*)$ for $p$ smooth, that appear on $S$-spaces when one works with a scheme instead of a field
 are Quillen adjunctions for the second model structure, but not for the first one.}
It also admits a canonical $\infty$-categorical structure denoted by $\ihtp(k)$:
 one considers the localization
 of the Nisnevich $\infty$-topos with respect to weak $\AA^1$-equivalences.
Using any of these enhancements, one deduces the definition of an $\AA^1$-local mapping space simply denoted by
 $\Map^{\AA^1}(\cX,\cY)$. One gets:
$$
[\cX,\cY]^{\AA^1}=\pi_0\Map^{\AA^1}(\cX,\cY).
$$
Moreover, the following principles hold:
\begin{itemize}
\item A morphism between $\AA^1$-local $k$-spaces is a weak $\AA^1$-equivalence
 if and only if it is a local weak equivalence (in the sense of Definition \ref{df:local-we}).
\item For any $k$-space $\cX$, there exists an $\AA^1$-local $k$-space
 $L_{\AA^1} \cX$ and a weak $\AA^1$-equivalence $\cX \rightarrow L_{\AA^1} \cX$
 (which can even be chosen functorially in the model category case).
\end{itemize}
Except for the last statement, these principles follow from the general procedure
 of localization in higher categories (see \ref{sec:abstract-loc}).

\begin{rema}
 In fact, Morel and Voevodsky provide a more concrete construction
 of the $\AA^1$-localization functor using the functor $\mathrm{Sing}_*^{\AA^1}$
 of ``Suslin's singular chains''
 (see \cite[Lem. 3.20]{MV}, with the addition that for Nisnevich sheaves on $\Sm_k$,
 one may take the countable cardinal).
\end{rema}

\subsubsection{Realizations}\label{sec:real-HA1}
We consider the notation of Section \ref{sec:real}.

We first consider a complex embedding $\sigma:k \rightarrow \CC$.
 Then the canonical functor $X \mapsto X^\sigma(\CC)$ with values in topological spaces,
 admits a canonical extension to $k$-spaces. It can be shown following either \cite[\textsection 5.1]{DIreal},
 or \cite[\textsection A.4]{PPRkth}
 that it sends weak $\AA^1$-equivalences
 to weak equivalences of topological spaces, therefore inducing an $\infty$-functor
 called the \emph{$\sigma$-realization functor}, or complex realization when $\sigma$ is clear:
$$
\rho_\sigma:\ihtp(k) \rightarrow \ihtp
$$
For $\sigma=\Id_\CC$, we simply write $\rho_\CC$.

Next we consider a real embedding $\sigma:k \rightarrow \RR$. 
 Given a smooth $k$-scheme, instead of looking at the real points $X^\sigma(\RR)$,
 it is more accurate to look at the complex points $X^{\sigma}(\CC)$ with the canonical
 action $\nu_X^\sigma$ of $\ZZ/2=\mathrm{Gal}(\CC/\RR)$.
 The canonical functor $X \mapsto (X^\sigma(\CC),\nu_X^\sigma)$ from smooth $k$-schemes to $\ZZ/2$-equivariant topological spaces
 also admits a canonical extension to $k$-spaces, and one shows this extension maps weak $\AA^1$-equivalences
 to weak equivalences of $\ZZ/2$-equivariant topological spaces (see \cite[Th. 5.5]{DIreal}).
 One deduces the \emph{equivariant $\sigma$-realization
 functor}:
$$
\rho^{\ZZ/2}_\sigma:\ihtp(k) \rightarrow \ihtp_{\ZZ/2}
$$
Taking $\ZZ/2$-homotopy fixed points then induces the $\sigma$-realization functor:
$$
\rho_\sigma:\ihtp(k) \rightarrow \ihtp.
$$
In particular, for a smooth $k$-scheme $X$, one gets: $\rho_\sigma(X)=X(\CC)^{h\ZZ/2}=X(\RR)$.
 When $\sigma$ is clear, one simply uses the terminology ``equivariant real'' and ``real realization''.

\begin{exem}
As expected, a smooth $k$-scheme $X$ is said to be \emph{$\AA^1$-contractible}
 if its structural morphism $p$ is a weak $\AA^1$-equivalence.

The study of this particular kind of varieties has been a rich question,
 started by \textcite{AD07} in their pioneering work.
 There are many interesting families of $\AA^1$-contractible smooth
 algebraic varieties. We refer the interested reader to \cite{AsokOst}.
 Let us mention a few examples:
\begin{enumerate}
\item \emph{Open varieties in quadrics}: Consider the affine quadric 
$$
Q_{2n}:\left\{\sum_i x_iy_i=z(1+z)\right\} \subset \AA^{2n+1}
$$
 and the closed subvariety 
$$
E_n:\{x_1=\hdots=x_n=0, z=-1\} \subset Q_{2n}.
$$
Then it was shown by \cite[\textsection 3.1]{ADF} that for all integers $n\geq 1$,
 $X_n=Q_{2n}-E_n$ is a quasi-affine smooth $k$-variety which is $\AA^1$-contractible,
 but not a unipotent quotient of an affine space for $n \geq 3$.\footnote{This works not only over an
 arbitrary field $k$, but even over an arbitrary base.} 
\item \emph{Koras-Russell 3-fold of the first kind}: they are smooth affine hypersurfaces
 defined by an equation of the form:
$$
K_{m,r,s}:\{x^mz=y^r+t^s+x\} \subset \AA^4_\CC
$$
where $m,r,s \geq 2$, and $r$ and $s$ are coprime.
 These complex algebraic varieties are known to be topologically contractible,
 non isomorphic to $\AA^3_\CC$. \cite{DF18} proved they are $\AA^1$-contractible, even working over
 any characteristic $0$ base field $k$.
\end{enumerate}
\end{exem}

Given the realization functors defined in \ref{sec:real-HA1},
 it is known that an $\AA^1$-contractible smooth $k$-variety
 is topologically contractible for all complex and real embeddings of the base field $k$.
 One may naturally ask whether the converse holds.
 This question was resolved in \cite[Th. 1.1]{CRpi0}.
\begin{theo}
Let $X$ be a smooth affine algebraic surface over a field $k$ of characteristic $0$.
 Then $X$ is $\AA^1$-contractible if and only if $X$ is isomorphic to $\AA^2_k$.
\end{theo}
It follows that there are smooth affine complex algebraic surfaces which are topologically contractible
 but not $\AA^1$-contractible: one can consider either a tom Dieck-Petri surface
 or the Ramanujam surface (see \emph{op. cit.} after Th. 1.2).

\begin{rema}
Note that this implies that, even when restricted to the subcategory of compact objects,
 the complex realization functor with source $\htp(\CC)$ is not conservative.
 This is in contrast with Beilinson's well-known conservativity conjecture for
 constructible rational mixed motives.
\end{rema}

\begin{exem}[Motivic spheres and Thom spaces]\label{ex:A1-spheres}
We will always assume that $\PP^1_k$ (resp. $\GG$, $\AA^n-\{0\}$) is pointed by $\infty$ (resp. $1$, $(0,\hdots,0,1)$).
 Given a vector bundle $V$ over a smooth $k$-scheme, one defines the \emph{Thom ($k$-)space of $V$}
 as the quotient\footnote{the base point is given by the canonical map $*=V^\times/V^\times \rightarrow \Th(V)$}
$$
\Th(V):=V/V^\times
$$
where $V^\times \rightarrow V$ is the open immersion of the complement of the zero section.

Then one obtains the following computations in the pointed motivic homotopy category $\htp_\bullet(k)$:
\begin{itemize}
\item $\PP^1 \simeq S^1 \wedge \GG$
\item $\AA^n-\{0\} \simeq S^{n-1} \wedge \GG^n$
\item $\Th(\AA^n_k) \simeq (\PP^1)^{\wedge,n}$
\item $Q_{2n-1}\simeq S^{n-1} \wedge \GG^{\wedge,n}$
\item $Q_{2n}\simeq S^{n} \wedge \GG^{\wedge,n}$
\end{itemize}
The first three isomorphisms are good exercises.\footnote{Hint: use the excision property and the $\AA^1$-contractibility of $\AA^n$.}
 The last two isomorphisms are due to \cite[Th. 2/2.2.5]{ADF}.
 The reader can now check, from the table in Section \ref{sec:real},
 that all these computations are compatible with the real and complex realizations as defined
 in \ref{sec:real-HA1}.
\end{exem}

\begin{exem}
Let us finish with a more advanced example. Assume our base field $k$ is algebraically closed field of characteristic $0$.
 Given an integer $n>0$ and a separable polynomial $P(z)$ of degree $d>0$,
 the associated \emph{Danielewski surface} is defined as the affine hypersurface
$$
D_{n,P}:x^ny=P(z) \subset \AA^3_k.
$$ 
It was shown by Danielewski and Fieseler
 (see \cite{Dub05}) that $D_{n,P}$ is a Zariski-local torsor under a line bundle over the affine line with a $d$-fold origin.
 This implies that the associated $k$-space has the motivic homotopy type of a wedge\footnote{see Section \ref{num:pointed&smash};}
 of spheres:
$$
D_{n,P} \simeq (\PP^1_k)^{\vee,d}.
$$
\end{exem}

\subsection{$\AA^1$-homotopy sheaves}

As in topology, one can encode weak $\AA^1$-equivalences via the appropriate
 notion of homotopy groups --- or rather homotopy sheaves, as in Section~\ref{sec:local-htp-shv}.
\begin{defi}
Let $\cX$ be a $k$-space. One defines its \emph{sheaf of $\AA^1$-connected components}
 $\piA 0(\cX)$ as the (Nisnevich) sheaf of sets on $\Sm_k$ associated with the presheaf:
$$
V \mapsto [V,\cX]^{\AA^1}.
$$
Assume that $\cX$ is pointed. Then one defines for any integer $n>0$
 the \emph{$n$-th $\AA^1$-homotopy sheaf} $\piA n(\cX)$ associated with $\cX$ as the sheaf 
 associated with the presheaf:
$$
V \mapsto [S^n \wedge V_+,\cX]_\bullet^{\AA^1}.
$$ 
\end{defi}
The properties of the $\AA^1$-localization functor imply the following important
 formula, for any $n \geq 0$:
\begin{equation}
\piA n(\cX)=\upi_n(L_{\AA^1}\cX)
\end{equation}
where the right-hand side sheaf was defined in Section \ref{sec:local-htp-shv}.

\begin{exem}\emph{Loop spaces}.--\label{ex:loop}
We recalled the definition of the smash product on pointed $k$-spaces;
 in Section \ref{num:pointed&smash}. This operation corresponds to a closed monoidal;
 structure on the homotopy category $\htp_\bullet(k)$. 

In particular, one gets an internal pointed hom functor, and one can define several loop $k$-spaces,
 associated with our different motivic spheres:
\begin{itemize}
\item simplicial: $\Omega_{S^1}=\uHom_\bullet(S^1,-)$;
\item $\GG$-loops: $\Omega_{\GG}=\uHom_\bullet(\GG,-)$;
\item $\PP^1$-loops: $\Omega_{\PP^1}=\uHom_\bullet(\PP^1_k,-)$.
\end{itemize}
We can iterate as usual these constructions. It follows formally that:
$$
\piA n(\cX) \simeq \piA 0(\Omega^n_{S^1}\cX).
$$
Then it can be shown as in topology that $\Omega_{S^1}\cX$ has an $h$-group structure:
 this gives another proof for the fact that $\piA 1(\cX)$ is a sheaf of groups.
 As is classical in topology,
 one also deduces that for $n>1$, the sheaf $\piA n(\cX)$ has two compatible group
 structures hence is abelian.

Let us also finally mention that one deduces from Example \ref{ex:A1-spheres}
 the identity $\Omega_{\PP^1}=\Omega_{S^1} \Omega_{\GG}$. In particular, any $\PP^1$-loop space
 has an $h$-group structure.
\end{exem}

\begin{rema}
As is typical in homotopy theory,
 it is crucial that the closed monoidal structure on $\htp_\bullet(k)$ corresponding
 to the smash product exists at the model or $\infty$-categorical level
 (see also Section \ref{sec:monoid-ifty}).
 First, this is the only way to get such a structure on the homotopy category.
 Second, it will be used to define the stable motivic homotopy category in Section
 \ref{sec:stable}. For the explicit construction in the $\infty$-categorical case,
 we refer the reader to \cite[\textsection 2.4.2]{Robalo}.
\end{rema}

\begin{defi}\label{df:A1-connected}
Let $n\geq 0$ be an integer
 and $\cX$ be a $k$-space, pointed if $n>0$.

One says that $\cX$ is \emph{$n$-$\AA^1$-connected} if for any $0 \leq i\leq n$, we have $\piA i(\cX)=*$.
 We say \emph{$\AA^1$-connected} for $0$-$\AA^1$-connected
 and \emph{simply $\AA^1$-connected} for $1$-$\AA^1$-connected.
\end{defi}

\begin{exem}\label{ex:unstable-A1htp-shv}
 One easily deduces from Example \ref{ex:A1-local} the following computations.
\begin{enumerate}
\item Let $X$ be a smooth $k$-scheme. Then the following conditions are equivalent:
\begin{enumerate}
\item[(i)] $X$ is $\AA^1$-rigid.
\item[(ii)] The canonical morphism of sheaves $X \rightarrow \upi_0(X)$ is an isomorphism.
\end{enumerate}
In this case, one also deduces that for any base point $x \in X(k)$ and any $n>0$,
 $\piA n(X,x)=*$.
\item $\piA n(B\GG)=\GG$ if $n=1$, and $*$ otherwise.
 This correctly reflects Example~\ref{ex:A1-local}(2).
\item Following Morel, one says that a sheaf of groups $\cG$ is \emph{strongly $\AA^1$-invariant}
 if for any smooth $k$-scheme $X$, both maps
$$
\cG(X) \rightarrow \cG(\AA^1_X) \text{ and } \ H^1(X,\cG) \rightarrow H^1(\AA^1_X,\cG)
$$
are isomorphisms. On the right-hand side, we considered pointed sets
 of isomorphisms classes of Nisnevich-local $\cG$-torsors on $X$.

One can check that the following properties are equivalent:
\begin{enumerate}
\item $\cG$ is strongly $\AA^1$-invariant.
\item The $k$-space $B\cG$ (see Example \ref{ex:BG}) is $\AA^1$-local.
\end{enumerate}
As in the case of $\GG$, when these properties hold, one gets:
 $\piA n(B\cG)=\cG$ if $n=1$, and $*$ otherwise.
\end{enumerate}
\end{exem}

\subsubsection{Unramified sheaves}\label{num:unramif}
 To state the next theorem we introduce the following terminology,
 based on classical considerations but due to Morel.
 Let $F$ be a Zariski sheaf of sets on $\Sm_k$. Given a smooth $k$-scheme $X$ and a point $x \in X$,
 we let 
\begin{equation}\label{eq:extension-shv-etf}
F(\cO_{X,x})=\varinjlim_{x \in U \subset X} F(U)
\end{equation}
be the Zariski fiber of $F$ over $X$ at the point $x$. When $x$ is a generic point with residue field $E$,
 we let $F(E)=F(\cO_{X,x})$.

Let us assume that for any dense open immersion $j:U \rightarrow X$ of smooth $k$-schemes,
 the induced morphism $F(X) \rightarrow F(U)$ is injective. Then when $X$ is connected
 with function field $E$, the set $F(U)$, and similarly $F(\cO_{X,x})$ for any $x \in X$, can be identified with
 a subset of $F(E)$. Using this identification, we say that $F$ is \emph{unramified} when in addition to the preceding
 condition we have for any connected smooth $k$-scheme $X$
$$
F(X)=\bigcap_{x \in X^{(1)}} F(\cO_{X,x}),
$$
where the intersection runs over the set $X^{(1)}$ of points of codimension $1$ in $X$.

Examples of unramified abelian sheaves come from Bloch-Ogus theory \cite[Th. 4.2]{BO},
 and also from Voevodsky's theory of homotopy invariant Zariski sheaves with transfers
 (see \cite[Th. 24.11]{VMW}). 
 The following fundamental structure theorem is due to Morel,
 inspired by the latter example.
\begin{theo}\label{thm:structure-piA}
Let $\cX$ be a pointed $k$-space.
\begin{enumerate}
\item The sheaf of groups $\piA 1(\cX)$ over $\Sm_k$ is unramified and strongly $\AA^1$-invariant.
\item Assume the field $k$ is perfect.
 Then for any $n>1$, the sheaf of abelian groups $\piA n(\cX)$ is unramified and \emph{strictly $\AA^1$-invariant}:
 it has $\AA^1$-invariant cohomology.
\end{enumerate}
\end{theo}
This theorem is due to \cite{MorelLNM}: 
 point (1) is Theorem 6.1, and point (2) would follow from Theorem 5.46.
 We warn the reader that the proof of the latter contains an as yet unproved claim.\footnote{This problem was raised by Niels Feld.
  Here is a detailed summary where, unless explicitly stated, references concern \cite{MorelLNM}:
\begin{enumerate}
\item The existence of well-defined transfers on strongly $\AA^1$-invariant sheaves $F$ is only proven on those of the form $F=M_{-2}$
 (Theorem 4.27, see \cite[Th. 5/6.1.5]{Feld3} for a more precise statement),
 but claimed to exist on sheaves of the form $F=M_{-1}$ in Remark 4.31;
\item Corollary 5.30 uses the existence of transfers on $M_{-1}$, via its reference to Theorem 5.26;
\item Theorem 5.31 uses Corollary 5.30 which cannot be applied in codimension $1$.
\end{enumerate}
By constrast, the proof of Tom Bachmann reduces Theorem 5.30 to the case of $\PP^1_S$ and then uses a well-defined pushforward
 for the projection $\PP^1_S \rightarrow S$.}
 However, two alternative proofs, heavily based on \emph{loc. cit.} but which avoid the problem,
 are given by \cite[Th. 1.6, Cor. 1.8]{BachShv} and by \cite{AyoubNotes}.

\subsubsection{Gersten resolutions}\label{sec:Gersten-resol}
Assume that the base field $k$ is perfect.
It is well-known from \cite{CTHK} that the strict $\AA^1$-invariance of $F=\piA n(\cX)$, $n>1$,
 implies that $F$ admits a Gersten resolution.\footnote{Note that the correct terminology should be
 ``Gersten property'' as it will be recalled in the next footnote that the Gersten resolution of $F$,
 if it exists, is uniquely determined up to unique is omorphism.}
 In particular, both Zariski and Nisnevich cohomologies of a smooth $k$-scheme $X$
 with coefficients in $F$  can be computed in terms of the associated Gersten complex.\footnote{Another way
 of stating this is that the restriction $F|_{X_\zar}$ to the small Zariski site of $X$ is Cohen-Macaulay
 in the sense of \cite[Def. p. 238]{HartRD}. Then the Gersten complex
 restricted to $X_\zar$ is the associated Cousin complex.
 In particular, this complex is unique up to unique isomorphism thanks to \cite[Proposition 2.3]{HartRD}.
 These considerations can be extended to the Nisnevich topology. See \cite[\textsection 4.3]{DFJ}, and also \cite{DKO}
 for further developments.}
 In the case of sheaves of the form $F=\piA n(\cX)$, the Gersten complex of a smooth $k$-scheme $X$
 with coefficients in $F$ even takes the following simpler form and is called the \emph{Rost-Schmid complex} after \cite[Chap. 5]{MorelLNM}:
\begin{equation}\label{eq:Rost-Schmid}
\bigoplus_{x \in X^{(0)}} F(\kappa(x)) \rightarrow \bigoplus_{x \in X^{(1)}} F_{-1}(\kappa(x))\{\nu_x\}
 \rightarrow \hdots \rightarrow \bigoplus_{x \in X^{(d)}} F_{-d}(\kappa(x))\{\nu_x\}
\end{equation}
with the following notation:
\begin{enumerate}
\item $d$ is the dimension of $X$;
\item $X^{(i)}$ denotes the set of points of codimension $i$ in $X$;
\item $F_{-1}$ is \emph{Voevodsky's (-1)-construction}, or \emph{simply contraction the contraction of $F$}:
 $F_{-1}(X)$ is the cokernel of the split monomorphism $F(\GG \times X) \xrightarrow{s_1^*} F(X)$,
 and $F_{-n}$ is the $n$-th iterated application of the $(-1)$-construction;
\item we have used notation \eqref{eq:extension-shv-etf} for the sheaf $F_{-n}$, with $\kappa(x)$ being the residue field of $x$ in $X$;
\item $\nu_x$ is the determinant of the conormal sheaf of the regular closed immersion $\{x\} \rightarrow X_{(x)}=\spec(\cO_{X,x})$;
\item given an invertible line bundle $\cL$ over $X$, $\cL^\times$ being the subsheaf complementary of the zero section, 
 we have adopted the following notation after Morel:
$$
F_{-n}(X)\{\cL\}=F_{-n}(X) \otimes_{\ZZ[\GG(X)]} \ZZ[\cL^\times(X)]
$$
using the natural action of $\GG$ on $F_{-n}$ and on $\cL^\times$.
\end{enumerate}
In fact, the core argument of the previous theorem is to show that
 a strongly $\AA^1$-invariant sheaf of \emph{abelian groups} $F$ admits a Gersten resolution.
 This in turn implies the $\AA^1$-invariance of all its cohomology sheaves.

For a systematic treatment of Rost-Schmid complexes, in the style of cycle modules defined by \cite{Rost},
 we refer the reader to \cite{Feld1}.

\begin{exem}
For the time of writing, the structure of the sheaf of $\AA^1$-connected components $\piA 0(\cX)$ remains mysterious.
 We know it is not $\AA^1$-invariant because of a counter-example due to \cite{AyoubNA1}.
 However, for an $h$-group $\cX$ (same definition as in topology),
 the sheaf $\piA 0(\cX)$ is unramified and $\AA^1$-invariant,
 as proved by \cite[Th. 4.18]{ChouH}.\footnote{See the proof p. 51 for the unramified property.}

Moreover, several fundamental computations in this setting
 were first established in the seminal work of \cite{AsokMorel}.
 Let $X$ be a smooth proper $k$-scheme over a perfect field $k$.
 Applying \emph{loc. cit.} Theorem 6.2.1 and Proposition 6.2.6 (taking into account Remark 2.2.3),
 one gets a canonical bijection for any function field $E/k$:
\begin{equation}\label{eq:R-quiv&piA_0}
X(E)/R \xrightarrow{\ \sim\ } \piA 0(X)(E)
\end{equation}
where $(-/R)$ denotes the quotient by Manin's $R$-equivalence relation.
 This is enough to deduce that if the left-hand side is trivial for any $E/k$,\footnote{in that case, we can say that $X$ is \emph{universally $R$-trivial}.
 Recall that this is implied by the property of being retract $k$-rational or even, universally $\CH_0$-trivial;}
 then $X$ is $\AA^1$-connected --- apply Proposition 2.2.7 of \emph{op. cit.}

Moreover, the isomorphism \eqref{eq:R-quiv&piA_0} was extended to the case where
 $E/k$ is an arbitrary function field of characteristic $0$,
 and $X=G$ is a semisimple, simply connected, isotropic, and absolutely almost simple algebraic group:
 see \cite[Th. 4.2]{BSgps}.\footnote{Recall that in this case,
 according to \cite[Th. 7.2]{GilleB}, the left hand side of \eqref{eq:R-quiv&piA_0} 
 is also isomorphic to the so called Whitehead group of $G$ over $k$.
 In particular, thanks to \emph{loc. cit.} Theorem 8.1, when $k$ is a global field, 
 $G$ is $\AA^1$-connected.}
\end{exem}

\begin{exem}
Let $f:\cX \rightarrow \cY$ be a morphism of $k$-spaces.
 Following \cite[Definition 7.1, Lemma 7.2]{MorelLNM}, one says that $f$ is an $\AA^1$-covering if it has the right lifting property
 with respect to weak $\AA^1$-equivalences.\footnote{Beware that
 if we want to give a meaningful $\infty$-categorical formulation,
 one really has to consider $f$ as a map in the Nisnevich $\infty$-category $\ihtp^\nis(k)$.}
 For a strongly $\AA^1$-invariant sheaf of groups $\cG$,
 any Nisnevich-local $\cG$-torsor is an $\AA^1$-covering: \emph{op. cit.} Lemma 7.5.
 An important theorem of Morel, \emph{op. cit.} Theorem 7.8,
 shows that any pointed $\AA^1$-connected $k$-space $\cX$ admits a \emph{universal $\AA^1$-covering}
$$
\tilde \cX \rightarrow \cX
$$
where $\tilde \cX$ is simply $\AA^1$-connected.
 As in topology, one formally deduces that $$\piA 1(\cX)=\underline{\Aut}_{\cX}(\tilde \cX)$$
 where the right-hand side is the sheaf of automorphisms of $\AA^1$-covers.

As an example, for an integer $n>1$, the canonical $\GG$-torsor
$$
\AA^{n+1}-\{0\} \rightarrow \PP^n
$$
is in fact the universal $\AA^1$-covering of $\PP^n$. One deduces (\emph{op. cit.} Theorem 7.13)
 that, for any integer $n>1$,
$$
\piA 1(\PP^n)=\GG.
$$

By contrast, the $\AA^1$-homotopy sheaf $\piA 1(\PP^1_k)$ is non-abelian.
 In fact, it is the \emph{free strictly $\AA^1$-invariant sheaf} of groups generated
 by the final sheaf of sets $*$ (see \emph{op. cit.} Lemma 7.23).
 We will give a more precise description in Proposition~\ref{prop:Api1-P1} below.
\end{exem}

\subsection{Morel quadratic degree}

\subsubsection{Milnor-Witt K-theory}\label{sec:KMW}
In the theory of motivic complexes, Milnor K-theory\footnote{Recall that this is the tensor $\ZZ$-graded algebra of the $\ZZ$-module $k^\times$
 modulo the \emph{Steinberg relation} $\{u,1-u\}=u \otimes (1-u)=0$ .}
 $\KM_*(k)$ of a field $k$ plays a central role,
 as the extension algebra between Tate twists $\un(n)$. In motivic homotopy,
 we have seen in Cazanave's Theorem \ref{thm:Caza} that inner products play a central role,
 at least visible in the naive $\AA^1$-endomorphism classes of the sphere $(\PP^1_k,\infty)$.

One of Morel's key insights in his analysis of motivic homotopy theory over a field
 was the introduction of a theory that encompasses both theories into what is now
 called the \emph{Milnor-Witt K-theory} of the field $k$;
 see \cite[\textsection 3.1]{MorelLNM}. Morel's definition of the corresponding $\ZZ$-graded
 ring, denoted by $\KMW_*(k)$, is by generators and relation:
\begin{itemize}
\item \underline{Generators}: symbols $[u]$ of degree $+1$ for a unit $u \in k^\times$, and the Hopf symbol $\eta$ of degree $-1$.

Let us write: 
 $[u_1,\hdots,u_n]=[u_1]\hdots[u_n]$, $\qd u=1+\eta.[u]$, $\epsilon=-\qd{-1}$, $h=1-\epsilon$.
\item \underline{Relations}:
\begin{itemize}
\item[(MW1)] $[u,1-u]=0$, $u \neq 0,1$;
\item[(MW2)] $[uv]=[u]+[v]+\eta[u,v]$;
\item[(MW3)] $\eta[u]=[u]\eta$;
\item[(MW4)] $\eta h=0$.
\end{itemize}
\end{itemize}
Morel was inspired by the Milnor conjecture, linking Milnor K-theory and the Witt ring. This definition indeed realizes the synthesis
 between these two theories, as seen by the following computations:
\begin{align*}
\KMW_*(k)\slash(\eta)&=\KM_*(k) \\
\KMW_*(k)[\eta^{-1}]&=\W(k)[\eta,\eta^{-1}].
\end{align*}
See \emph{loc. cit.}\footnote{The first computation is easy,
 and the second one follows from the presentation of $\W(k)$ by generators and relation given in \ref{num:present-MW}.}
 Moreover, one also deduces
$$
\KMW_0(k)=\GW(k)
$$
and in degree $0$, the projection modulo $\eta$ induces the rank map:
\begin{equation}\label{eq:mod-eta}
\GW(k)=\KMW_0(k) \rightarrow \KMW_0(k)/(\eta)=\KM_0(k)=\ZZ.
\end{equation}
The following computation is one of the major results of \cite[Cor. 6.43]{MorelLNM}.
 Recall that the group scheme $\GG$ is one of our motivic spheres, seen as a pointed $k$-scheme by
 the unit element.
\begin{theo}\label{thm:wA1-htp-classes}
Consider the above notation.
 Let $n,i,m,j \in \NN$ be integers such that $n\geq 2$. Then there are canonical isomorphisms:
$$
[S^m \wedge \GG^{\wedge j},S^n \wedge \GG^{\wedge i}]_\bullet^{\AA^1} \simeq \begin{cases}
0 & m<n \text{ or } (m=n, j>0, i=0), \\
\ZZ & m=n, j=i=0, \\
\KMW_{i-j}(k) & (m=n, i>0).
\end{cases}
$$
\end{theo}
The proof makes use of the \emph{motivic version of the Hurewicz theorem}
 (see Remark \ref{rem:Morel-Hurewicz}).

\begin{exem}\label{ex:symbols-HA1}
In fact, one can explicitly identify the generators of the Milnor-Witt ring
 with geometrically defined morphisms via the isomorphism appearing in the preceding theorem.
 First, a unit $u \in k^\times$ defines a morphism of $k$-schemes
 $\gamma_u:\spec(k) \rightarrow \GG$, whose weak $\AA^1$-homotopy class
 corresponds to the element $[u] \in \KMW_1(k)$.

More interestingly, the element $\eta \in \KMW_{-1}(k)$ is sent to the class of the obvious map
$$
\eta:\AA^2_k-\{0\} \rightarrow \PP^1_k.
$$
We call it the \emph{algebraic Hopf map} and denote it (abusively) by $\eta$.
 In fact, one can observe that when $k=\CC$, the topological realization of $\eta$
 is precisely the classical Hopf map $S^3 \rightarrow S^2$.

Another meaningful element for motivic homotopy is the class $\rho:=[-1] \in \KMW_1(k)$.
 We refer the reader to Theorem \ref{thm:Bachamn-real} for an illustration.
\end{exem}

\begin{rema}\label{rem:DM-modulo-eta}
Let us anticipate what follows by stating how this computation relates to motivic cohomology.
 Taking realization in the category of motivic complexes, one gets a canonical map:
$$
[S^m \wedge \GG^{\wedge j},S^n \wedge \GG^{\wedge i}]_\bullet^{\AA^1}\rightarrow \HMot^{n+i-m-j,i-j}(k)
$$
where the right-hand side is the motivic cohomology of the field $k$. In particular, when $n=m$,
 we get a map
$$
\KMW_{i-j}(k) \rightarrow \KM_{i-j}(k)
$$
which is in fact the projection modulo $\eta$, and in particular, the rank map \eqref{eq:mod-eta} when $i=j$.
 In other words, motivic complexes do not see the quadratic phenomena of motivic homotopy.

Note also that Morel's computations do not say anything about the range $m>n$, $i=j$,
 which in fact corresponds to the range of the Beilinson-Soulé vanishing conjecture in
 motivic cohomology.
\end{rema}

One of the main applications indicated by Morel is
 a notion of degree in motivic homotopy analogous to the Brouwer
 degree. We will call it the \emph{Morel degree}.
\begin{coro}\label{cor:un-end-spheres}
Consider an integer $n \geq 2$.
 Then there exists isomorphisms of abelian groups:
$$
[(\PP^1_k)^{\wedge n},(\PP^1_k)^{\wedge n}]_\bullet^{\AA^1} \simeq [\AA^n_k-\{0\},\AA^n_k-\{0\}]_\bullet^{\AA^1}
 \simeq \GW(k).
$$
Indeed, the abelian group structure on the first two sets comes from Example~\ref{ex:A1-spheres}:
 it is proved there that both $k$-spaces $(\PP_k^1)^{\wedge n}$ and $\AA^n-\{0\}$
 are at least a $2$-simplicial suspension of another $k$-space, except for
 $\AA^2-\{0\}$ which is an $h$-space.
\end{coro}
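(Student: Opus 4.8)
The plan is to deduce both isomorphisms from Theorem~\ref{thm:wA1-htp-classes} after rewriting the two endomorphism sets in terms of the standard bigraded spheres appearing there. Smashing the equivalence $\PP^1_k\simeq S^1\wedge\GG$ of Example~\ref{ex:A1-spheres} gives $(\PP^1_k)^{\wedge n}\simeq S^n\wedge\GG^{\wedge n}$ in $\htp_\bullet(k)$, and the same example records $\AA^n_k-\{0\}\simeq S^{n-1}\wedge\GG^{\wedge n}$. Applying Theorem~\ref{thm:wA1-htp-classes} with source and target $S^n\wedge\GG^{\wedge n}$ --- the simplicial degree $n$ being $\geq 2$ and the $\GG$-degree $i=n$ being $>0$ --- places us in the third case with $m=n$, $j=i=n$, whence
$$
[(\PP^1_k)^{\wedge n},(\PP^1_k)^{\wedge n}]_\bullet^{\AA^1}\simeq\KMW_{n-n}(k)=\KMW_0(k)=\GW(k).
$$
The identical argument with source and target $S^{n-1}\wedge\GG^{\wedge n}$ is legitimate as soon as the simplicial degree $n-1$ is $\geq 2$, i.e.\ for $n\geq 3$, and gives $[\AA^n_k-\{0\},\AA^n_k-\{0\}]_\bullet^{\AA^1}\simeq\GW(k)$ too; the first isomorphism of the statement is then the composite. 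Under these identifications the abelian group law coming, as in the statement, from the (iterated) simplicial suspension structure corresponds to addition in $\KMW_0(k)$, while composition of self-maps corresponds to the multiplication of $\GW(k)$ --- this is Morel's degree.

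The one case left open is $\AA^2_k-\{0\}\simeq S^1\wedge\GG^{\wedge2}$, whose simplicial degree is only $1$, so Theorem~\ref{thm:wA1-htp-classes} does not apply directly. The abelian group structure is nonetheless available: $S^1\wedge\GG^{\wedge2}$ is a simplicial suspension (a co-$H$-cogroup), and the first-column map $\SL_2\to\AA^2_k-\{0\}$ is a Zariski-locally trivial $\Ga$-torsor, hence a weak $\AA^1$-equivalence, so $\AA^2_k-\{0\}$ is also an $H$-group; by Eckmann--Hilton the two structures agree and are abelian. To evaluate this group I would compare it with the case $n=2$ already settled, via the simplicial suspension
$$
\Sigma_{S^1}\colon[\AA^2_k-\{0\},\AA^2_k-\{0\}]_\bullet^{\AA^1}\longrightarrow[(\PP^1_k)^{\wedge2},(\PP^1_k)^{\wedge2}]_\bullet^{\AA^1}\simeq\GW(k),
$$
which makes sense because, by Example~\ref{ex:A1-spheres} and the definition of the Thom space, $(\PP^1_k)^{\wedge2}\simeq\Th(\AA^2_k)=\AA^2_k/(\AA^2_k-\{0\})$ is the cofiber of $\AA^2_k-\{0\}\hookrightarrow\AA^2_k$, which is the simplicial suspension $S^1\wedge(\AA^2_k-\{0\})$ since $\AA^2_k$ is $\AA^1$-contractible.

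Proving this suspension map bijective is the main obstacle, and it is exactly where the low connectivity of $\AA^2_k-\{0\}$ (only $\AA^1$-connected) bites: the motivic Freudenthal theorem gives surjectivity but not injectivity. For injectivity I would run a Moore--Postnikov obstruction-theoretic analysis of maps $\AA^2_k-\{0\}\to\AA^2_k-\{0\}\simeq\SL_2$: the primary invariant is the induced endomorphism of $\piA1(\AA^2_k-\{0\})=\uKMW_2$, whose ring of strictly $\AA^1$-invariant endomorphisms is $\GW(k)$, and one then checks that the higher obstruction and indeterminacy groups --- which sit in the few non-vanishing Nisnevich cohomology groups of the bigraded sphere $\AA^2_k-\{0\}$ --- contribute nothing further. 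This last verification is the delicate point and is carried out in Morel's original treatment; granting it, the suspension map is an isomorphism and both sets in the statement are identified with $\GW(k)$ for every $n\geq 2$.
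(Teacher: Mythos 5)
Your reduction of both endomorphism sets to Theorem~\ref{thm:wA1-htp-classes} is correct and matches what the text implicitly does: rewrite $(\PP^1_k)^{\wedge n}\simeq S^n\wedge\GG^{\wedge n}$ and $\AA^n_k-\{0\}\simeq S^{n-1}\wedge\GG^{\wedge n}$ via Example~\ref{ex:A1-spheres}, then read off $\KMW_0(k)=\GW(k)$ from the third case of the theorem. You are also right to flag the genuine gap: the hypothesis $n\geq 2$ in Theorem~\ref{thm:wA1-htp-classes} refers to the simplicial degree of the target, so the endomorphisms of $\AA^2_k-\{0\}\simeq S^1\wedge\GG^{\wedge2}$ are \emph{not} covered directly, and the paper's remark after the corollary only discusses the group structure (the $h$-space structure coming from $\AA^2-\{0\}\simeq\SL_2$), not the identification with $\GW(k)$. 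Observing that the cited theorem's formula does \emph{not} naively extend one more step down --- $[\PP^1,\PP^1]^{\AA^1}_\bullet$ is $\GW(k)\times_{Q(k)}k^\times$, not $\KMW_0(k)$, by Theorem~\ref{thm:MorelP1} --- would have further reinforced the point that the low-connectivity case genuinely needs a separate argument.

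However, the proposed repair for $n=2$ is not correct as stated. You claim the motivic Freudenthal theorem gives surjectivity of $\Sigma_{S^1}\colon[\AA^2-\{0\},\AA^2-\{0\}]^{\AA^1}_\bullet\to[(\PP^1)^{\wedge2},(\PP^1)^{\wedge2}]^{\AA^1}_\bullet$, but $\AA^2_k-\{0\}$ is only $\AA^1$-connected (it has $\piA1=\uKMW_2\neq *$), while as a source it has Nisnevich cohomological dimension $2$; this sits strictly outside the range of any available suspension theorem, $S^1$- or $\PP^1$-based, all of which require the target to be at least simply $\AA^1$-connected. So the claimed surjectivity has no backing, and neither does the reduction to the $(\PP^1)^{\wedge2}$ case. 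The approach Morel actually takes is a direct obstruction-theoretic analysis of $[\AA^2_k-\{0\},\AA^2_k-\{0\}]^{\AA^1}_\bullet$: the primary invariant lands in $H^1_{\nis}(\AA^2-\{0\};\uKMW_2)\simeq\GW(k)$ (computable from the localization sequence for $\{0\}\subset\AA^2$, or the Rost--Schmid complex), and one then shows the contributions from $H^1$ and $H^2$ with coefficients in $\piA2(\AA^2-\{0\})$ vanish, using that the source is a smooth $2$-fold and the coefficient sheaves are strictly $\AA^1$-invariant. That is a genuinely different route from a suspension comparison and does not invoke Freudenthal at all. Your last paragraph gestures in this direction but conflates it with the suspension argument; the two should be separated, and the Freudenthal claim dropped.
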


\begin{exem}
In particular, each pointed endomorphism $f$ of the $k$-space $\AA^n-\{0\}$ admits a quadratic degree
 $\tdeg(f) \in \GW(k)$.
 As an example, consider a unit $u \in k^\times$ and the associated pointed endomorphism
 $f_u:(t_1,...,t_n) \mapsto (ut_1,t_2,\hdots,t_n)$ of $\AA^n_k-\{0\}$.
 Then $\tdeg(f_u)=\qd{\bar u}$ with the notation of Section~\ref{num:present-MW}.

Let us consider a complex embedding $\sigma:k \rightarrow \CC$.
 Then a general pointed endomorphism $f$ as above induces a pointed continuous map
$$
\rho_\sigma(f):S^{2n-1} \rightarrow S^{2n-1}
$$
to which is associated a Brouwer degree. One gets:
$$
\mathrm{rk}(\tdeg(f))=\deg(\rho_\sigma(f)).
$$
\end{exem}

The case of the projective line can also be considered,
 but it requires more care.
 This is the following major theorem, obtained by the joint efforts of Morel and Cazanave.
 We refer the reader to \cite[Theorem 7.36]{MorelLNM} and \cite[Theorems 3.22, 4.6]{Caza2}
 for the proofs. In addition, it allows to compute both the naive pointed $\AA^1$-homotopy classes
 (Definition \ref{df:pointed-naive}) and the pointed $\AA^1$-homotopy classes
 (Definition \ref{df:A1-local}) in the particular case of endomorphisms of the projective line.
\begin{theo}\label{thm:MorelP1}
The map \eqref{eq:Caza} (page \pageref{eq:Caza}) induces an isomorphism of rings:
$$
[\PP^1_k,\PP^1_k]^{\AA^1}_\bullet \xrightarrow \sim \GW(k) \times_{Q(k)} k^\times
$$
where, on the left-hand side, the addition comes from the fact $\PP^1_k=S^1 \wedge \GG$,
 and the multiplication is induced by the composition of pointed maps.

In particular, the canonical map
$$
[\PP^1_k,\PP^1_k]^N_\bullet \rightarrow [\PP^1_k,\PP^1_k]^{\AA^1}_\bullet
$$
can be interpreted as the canonical morphism to the group completion of the left-hand side, with its canonical additive monoid structure.
\end{theo}

\begin{exem}\label{ex:naive&weak-A1}
When $k=\CC$, the rank map gives the following computation:
\begin{align*}
[\PP^1_k,\PP^1_k]^N_\bullet&=\NN \times \CC^\times; \\
[\PP^1_k,\PP^1_k]^{\AA^1}_\bullet&= \ZZ \times \CC^\times.
\end{align*}

When $k=\RR$, the signature map gives:

\begin{align*}
[\PP^1_k,\PP^1_k]^N_\bullet&=(\NN \times \NN) \times \RR^\times; \\
[\PP^1_k,\PP^1_k]^{\AA^1}_\bullet&=(\ZZ \times \ZZ) \times \RR^\times.
\end{align*}
In fact, whatever the base field $k$ is, 
 naive $\AA^1$-homotopy classes and weak $\AA^1$-homotopy classes of endomorphisms of the pointed $k$-space $\PP^1_k$
 \emph{do not coincide}.
\end{exem}

\subsubsection{Unramified Milnor-Witt K-theory and $\AA^1$-homotopy sheaves}\label{sec:unramifKM-W}
Recall that the \emph{$n$-th unramified Milnor K-theory} of a smooth connected $k$-scheme $X$ with function field $E$ is defined as the kernel:
$$
\uKM_n(X)=\Ker\big(\KM_n(E) \xrightarrow d \oplus_{x \in X^{(1)}} \KM_{n-1}(\kappa(x))\big)
$$
where $d$ is the sum of the residue maps associated with the discrete valuation $v_x$ on $E$ of a codimension $1$ point $x \in X$.
 This defines an unramified and strictly $\AA^1$-invariant (see e. g. Section~\ref{num:unramif}) abelian Nisnevich sheaf $\uKM_n$ on $\Sm_k$
 (see \cite[Prop. 6.9]{Deg4} applied to the cycle module $\KM_*$).

Similarly, one can define following Morel the \emph{$n$-th unramified Milnor-Witt K-theory} of a smooth $k$-scheme $X$ as the kernel:
$$
\uKMW_n(X)=\Ker\big(\KMW_n(E) \xrightarrow d \oplus_{x \in X^{(1)}} \KMW_{n-1}(\kappa(x))\{\nu_x\}\big)
$$
where $d$ is again the sum of some residue maps --- we have used the notation of Section \ref{sec:Gersten-resol}.
 Again $\uKMW_n$ is an abelian unramified and strictly $\AA^1$-invariant Nisnevich sheaf on $\Sm_k$
 (see \cite[\textsection 3.2]{MorelLNM}, or apply \cite{Feld2}, Theorem 4.1.7 to the Milnor-Witt cycle module
 $\KMW_*$). These considerations allow one to state the previous computations in terms
 of homotopy sheaves. Here is a fundamental example.

\begin{exem}\label{ex:piA-spheres}
For $n \geq 2$, we can now give more meaning to the assertion that $\AA^n-\{0\}$ is a motivic sphere,
 suggested in Section \ref{sec:real}.
 In fact, Theorem \ref{thm:wA1-htp-classes} implies it is $(n-1)$-$\AA^1$-connected
 in the sense of Definition \ref{df:A1-connected}, which is reflected by its complex realization. 
 Moreover, one can deduce from Corollary \ref{cor:un-end-spheres} (or see directly \cite[Rem. 6.42]{MorelLNM})
 that its first non-trivial homotopy sheaf is
$$
\piA {n-1}(\AA^{n}-\{0\})=\uKMW_{n}.
$$
The same result hold for $(\PP^1)^{\wedge,n}$.
\end{exem}

We end up this section with a beautiful computation, again due to Morel,
 that we state separately.
\begin{prop}\label{prop:Api1-P1}
There exists a weak $\AA^1$-equivalence of $k$-spaces:
$$
B\GG \simeq \PP^\infty_k=\varinjlim_{n \in \NN} \PP^n_k.
$$
Moreover, the obvious sequence of $k$-spaces:
$$
(\AA^2-\{0\}) \rightarrow \PP^1_k \xrightarrow p \PP^\infty_k
$$
is an $\AA^1$-fiber sequence, in the sense that the first $k$-space is the homotopy fiber of the map $p$.
 Applying the functor $\piA 1$, one deduces a short exact sequence of strongly $\AA^1$-invariant sheaves of groups:
$$
0 \rightarrow \uKMW_2 \rightarrow \piA 1(\PP^1_k) \rightarrow \GG \rightarrow 0.
$$
This is in fact a central extension.
\end{prop}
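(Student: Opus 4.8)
The plan is to identify $\PP^\infty_k$ with the classifying space $B\GG$, to recognize $p$ as a classifying map, and then to extract the short exact sequence from the long exact sequence of $\AA^1$-homotopy sheaves of the resulting $\AA^1$-fiber sequence. First I would establish the weak $\AA^1$-equivalence $B\GG\simeq\PP^\infty_k$, which expresses that $\PP^\infty_k$ is the \emph{geometric} classifying space of $\GG$: the $\GG$-torsors $(\AA^{n+1}-\{0\})\to\PP^n_k$ are compatible with the linear inclusions $\PP^n_k\hookrightarrow\PP^{n+1}_k$ and assemble, in the colimit, into a $\GG$-torsor $(\AA^\infty-\{0\})\to\PP^\infty_k$ whose total space $\AA^\infty-\{0\}=\varinjlim_n(\AA^n-\{0\})$ is $\AA^1$-contractible --- in fact naively, since the identity is naively $\AA^1$-homotopic to the ``shift'' self-map $(x_i)\mapsto(0,x_1,x_2,\dots)$, which is in turn naively $\AA^1$-homotopic to the constant map at the rational point $(1,0,0,\dots)$. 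As a $\GG$-torsor with $\AA^1$-contractible total space is a model for $B\GG$, this yields $B\GG\simeq\PP^\infty_k$; see \cite[\textsection 4]{MV}.

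Next, under this identification the ``obvious'' map $p\colon\PP^1_k\hookrightarrow\PP^\infty_k$ is the classifying map of a generator of $\Pic(\PP^1_k)$, and the associated $\GG$-torsor is exactly the projection $(\AA^2-\{0\})\to\PP^1_k$. Hence, in the \emph{Nisnevich} homotopy category, $(\AA^2-\{0\})\to\PP^1_k\xrightarrow{p}B\GG$ is a fiber sequence: the total space of a torsor, mapping to its base, mapping by the classifying map. The step I expect to be the main obstacle is to upgrade this to an $\AA^1$-fiber sequence, i.e.\ to prove that $\AA^2-\{0\}$ is the $\AA^1$-homotopy fiber of $p$. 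Here one must use that $B\GG$ is $\AA^1$-local (Example~\ref{ex:A1-local}(2), as $\GG$ is strongly $\AA^1$-invariant) together with Morel's machinery: the torsor $(\AA^2-\{0\})\to\PP^1_k$ is an $\AA^1$-covering by \cite[Lemma 7.5]{MorelLNM}, and the fiber sequence survives $\AA^1$-localization by the theory of $\AA^1$-coverings (equivalently, because $\AA^1$-localization of a Nisnevich fibration over an $\AA^1$-local base is again a fiber sequence); see \cite[\textsection 7]{MorelLNM}. The subtlety here is genuine: for $\AA^{n+1}-\{0\}\to\PP^n_k$ with $n\geq2$ the fiber is already simply $\AA^1$-connected (and one reads off $\piA 1(\PP^n_k)=\GG$), whereas for $n=1$ it is not, since $\piA 1(\AA^2-\{0\})=\uKMW_2\neq*$ (Example~\ref{ex:piA-spheres}).

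Granting the $\AA^1$-fiber sequence, I would conclude via its long exact sequence of $\AA^1$-homotopy sheaves. All three spaces are $\AA^1$-connected ($\PP^1_k$ and $\AA^2-\{0\}$ are simplicial suspensions by Example~\ref{ex:A1-spheres}, and $B\GG$ by Example~\ref{ex:unstable-A1htp-shv}), so feeding in $\piA 2(B\GG)=*$, $\piA 1(B\GG)=\GG$, $\piA 0(\AA^2-\{0\})=*$ (Example~\ref{ex:unstable-A1htp-shv}) and $\piA 1(\AA^2-\{0\})=\uKMW_2$ (Example~\ref{ex:piA-spheres}) collapses it to the short exact sequence $0\to\uKMW_2\to\piA 1(\PP^1_k)\to\GG\to0$; the outer terms are strongly $\AA^1$-invariant by construction and the middle one by Theorem~\ref{thm:structure-piA}(1). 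It remains to see that this extension is central, which amounts to the triviality of the conjugation action of $\piA 1(\PP^1_k)$ on the normal abelian subsheaf $\uKMW_2$, equivalently of the monodromy action of $\GG=\piA 1(B\GG)$ on the $\AA^1$-homotopy fiber; the latter is induced by the diagonal scaling of $\GG$ on $\AA^2-\{0\}$, which acts on $\piA 1(\AA^2-\{0\})=\uKMW_2$ by multiplication by $\qd{\lambda}^2=\qd{\lambda^2}=1$, hence trivially. (Alternatively, one may cite the exactness and centrality directly from \cite[\textsection 7]{MorelLNM}.)
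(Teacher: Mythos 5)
Your proposal reproduces, in considerably more detail, exactly the argument of \textcite{MorelLNM} that the paper merely cites (Theorem~7.29, Lemma~7.23, and the remark following it): realize $\PP^\infty_k$ as the geometric $B\GG$, interpret $p$ as the classifying map of the torsor $\AA^2-\{0\}\to\PP^1_k$, promote the Nisnevich fiber sequence to an $\AA^1$-fiber sequence via Morel's $\AA^1$-covering theory, and then read off the short exact sequence from the long exact sequence of $\AA^1$-homotopy sheaves. So the approach is essentially the same; you are filling in what the paper defers to Morel.

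Two small caveats on precision, neither fatal. First, the parenthetical ``equivalently, because $\AA^1$-localization of a Nisnevich fibration over an $\AA^1$-local base is again a fiber sequence'' is too strong as a general principle; what actually makes the argument go through is the more specific machinery that a torsor under a strongly $\AA^1$-invariant sheaf of groups yields an $\AA^1$-fiber sequence (and its delooping), which is precisely what you correctly cite from \cite[\textsection 7]{MorelLNM}. Second, in the centrality step the diagonal action $\sigma_\lambda\colon(x,y)\mapsto(\lambda x,\lambda y)$ does not preserve the base point $(0,1)$, so the degree computation $\qd{\lambda}^2=\qd{\lambda^2}=1$ needs a word of justification: since $\uKMW_2$ is abelian the induced automorphism of $\piA 1$ is well-defined, and $\sigma_\lambda$ differs from the pointed self-map $f_{\lambda^2}\colon(x,y)\mapsto(\lambda^2 x,y)$ by the $\SL_2$-matrix $\mathrm{diag}(\lambda^{-1},\lambda)$, which is $\AA^1$-homotopic to the identity (Whitehead lemma, elementary matrices); hence $\sigma_\lambda$ acts as $\qd{\lambda^2}=1$. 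With that gloss your centrality argument is correct, and in fact gives a clean alternative to quoting Morel's remark verbatim.
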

We have applied the previous example for the first sheaf 
 and Example \ref{ex:unstable-A1htp-shv} for the third one.
 For the rest, see \cite{MorelLNM}, Theorem 7.29 taking into account Lemma 7.23
 and the remark that follows the proof.

\subsection{K-theory and vector bundles}

We have already seen with the case of the Picard group how classifying spaces can be used 
 in motivic homotopy. In the case of vector bundles of higher rank, the situation is more complex.
 Essentially by definition, it is clear that the (Nisnevich) classifying space $\BGL$ of the infinite
 general linear group classifies vector bundles in the simplicial homotopy category, as in topology.
 This basic fact can be extended to motivic homotopy in several directions.
 We will start by looking at the higher K-theory groups as defined by Quillen.
 Here the situation is exceptionally nice, and we have the following fundamental
 result of Morel and Voevodsky. See \cite[\textsection 4, Th. 3.13]{MV}.\footnote{Working relatively, that result is actually 
 proved for a regular scheme $X$. But beware that
 there is a gap in the proof of Morel and Voevodsky that was found by
 \cite[Remark 8.5]{ST}. It can be fixed by applying Theorem 8.2 of
 \emph{op. cit.}}
\begin{theo}\label{thm:unstable-rep-Kth}
For any smooth $k$-scheme $X$ and any pair of non-negative integers $(n,i)$,
 there exists an isomorphism, natural in $X$ with respect to pullbacks:
$$
[S^n \wedge (\PP^1_k)^{\wedge i} \wedge X_+,\ZZ \times \BGL]_\bullet^{\AA^1} \simeq \K_{n}(X)
$$
where $\ZZ \times \BGL$ is the product of the discrete $k$-space $\ZZ$ with the classifying space
 of the infinite general linear group, with base point given by the canonical base point of $\{0\} \times \BGL$.
\end{theo}
This result uses many of the good properties of Quillen's higher K-theory:
 its presentation via the Q-construction,
 its $\AA^1$-invariance over regular schemes also due to Quillen, Thomason-Trobaugh's Nisnevich descent theorem
 and lastly its $\PP^1$-periodicity property (see loc. cit. for details).

On the other hand, the output is truly remarkable as K-theory is representable by a
 very simple $k$-space, $\ZZ\times \BGL$, which is in some sense the\emph{ group completion of the h-monoid
 $\sqcup_n \BGL_n$} (see loc. cit. for more details).

Moreover, one directly reads off the above computation the following form of the classical
 (Bott) $\PP^1$-periodicity property of algebraic K-theory:
$$
\beta:\Omega_{\PP^1_k} (\ZZ \times \BGL) \simeq \ZZ \times \BGL
$$
where the $\PP^1$-loop space was defined in Example \ref{ex:loop}.
 In particular, $\ZZ \times \BGL$ has a structure of infinite $\PP^1$-loop space,
 which also gives a structure of infinite simplicial loop space.
 This gives the commutative $h$-group structure on the motivic pointed $k$-space
 $\ZZ \times \BGL$, which is compatible with the abelian group structure
 on K-theory via the above isomorphism.

\begin{exem}
In his PhD thesis, \cite{RiouOper} used the above representability result
 to deduce that all operations
 ($\lambda$-operations, Adams operations, products)
 on the functor $\K_0$ lift to operations on the $k$-space $\ZZ\times \BGL$
 seen in the $\AA^1$-homotopy category $\htp(k)$. 
\end{exem}

\begin{rema}\label{rem:vb&BGL}
Building on Example~\ref{ex:A1-local}(2),
 the above theorem was later extended to the so called \emph{unstable case}.
 Let us fix a smooth affine $k$-scheme $X$ and an integer $n>0$.
 Denote by $\Phi_n(X)$ the set of isomorphism classes of rank-$n$
 vector bundles over $X$. Recall that one has  a canonical isomorphism:
$$
\Phi_n(X)\simeq H^1(X,\GL_n),
$$
where the right-hand side is the set of Zariski (or equivalently, Nisnevich) 
$\GL_n$-torsors over $X$.
 
Then, one gets an isomorphism:
\begin{equation}\label{eq:vb&BGL}
[X,\BGL_n]^{\AA^1} \rightarrow \Phi_n(X), f \mapsto f^*(\gamma_n)
\end{equation}
where $\gamma_n$ is the universal $\GL_n$-torsor, and $f^*(\gamma_n)$
 is a well-defined pullback along a map $f:X \rightarrow \BGL_n$
 in the $\AA^1$-homotopy category.
 This result was first proved by \cite[Th. 1.29]{MorelLNM}, excluding the case $n=2$,
 and later extended by \cite[Th. 1]{AHW}.\footnote{To connect with the latter reference one uses
 the canonical $\AA^1$-weak $\BGL_n \simeq \mathrm{Gr}_n$,
 where the right-hand side is the infinite Grassmanian of $n$-dimensional subvector spaces. 
 In fact, the result is proved over an arbitrary smooth scheme over a Dedekind domain
 with perfect residue fields.}
\end{rema}

\subsubsection{Symmetric vector bundles and classifying spaces}
From what we have already seen, inner products and quadratic invariants
 are central in motivic homotopy theory, in particular via the Grothendieck-Witt group
 of a field. In fact, it is classical that symmetric vector bundles over a field,
 or more generally a scheme, can be viewed as torsors under the infinite orthogonal group
 $\Orth$. However this interpretation is only true \'etale locally --- indeed Nisnevich torsors
 over a field are trivial. Thus the (Nisnevich) classifying space $\mathrm{BO}$ that was introduced
 in Example \ref{ex:BG} is not adapted for this study. However, it is possible to define another $k$-space
$$
\BetO=L_{\et} \mathrm{BO}
$$
that represents \'etale local torsors in the simplicial Nisnevich category:
 it is the localization of $\mathrm{BO}$ with respect to \'etale hypercovers.\footnote{The associated
 hypercomplete \'etale sheaf, viewed in the Nisnevich $\infty$-topos, in the terminology of \cite{LurieHTT}.}
 By construction, we have for any smooth $k$-scheme $X$ an isomorphism in the homotopy category $\htp^\nis(k)$:
$$
[X,\BetO]^\nis=H^1_\et(X,\Orth).
$$
Note that $\GW(k)$ is the group completion of the latter pointed set in the particular case $X=\spec(k)$,
 with its natural monoid structure.

\subsubsection{Hermitian K-theory}\label{sec:hermit-K-th}
In fact, one can extend the Grothendieck-Witt groups of a field along lines similar to those used in K-theory.
 This is a very rich topic, which was started independently by Karoubi (hermitian K-theory) and Ranicki (L-theory).
 In the next statement, we will use the construction that was finally done by Hornbostel and Schlichting,
 at the price of assuming that $\car(k) \neq 2$.\footnote{In fact, there is now a general construction valid
 even in characteristic $2$, \cite{CHN}. See also Example \ref{ex:mot-spectra}(4).}
 According to \cite{SchHK}, Definition 9.1 (see also Proposition 9.3),
 given a line bundle $L/X$,  one defines bigraded higher Grothendieck-Witt groups $\GW_n^{[i]}(X,L)$
 which are contravariantly functorial --- when $L=\cO_X$, we just omit it from the notation.
 They satisfy analogous properties to Quillen's K-theory (contravariant, Nisnevich descent, an appropriate
 localization property). Here are some important distinctive features:
\begin{itemize}
\item \textit{Periodicity}: there exist isomorphisms: $\GW_n^{[i+4]}(X) \simeq \GW_n^{[i]}(X)$.
\item When $X$ is affine, the abelian group $\GW_0^{[0]}(X)$ (resp. $\GW_0^{[2]}(X)$) is
 the Grothendieck group of vector bundles\footnote{equivalently, finite rank locally free $\cO_X$-module}
 with a non-degenerate symmetric (resp. symplectic)
 bilinear form.\footnote{When $X$ is not affine, one has to take care about the so-called
 metabolic forms. In general, $\GW_0^{[0]}(X)$ (resp. $\GW_0^{[2]}(X)$) does coincide
 with the definition of \cite{Kneb}.}
\end{itemize}
In particular, $\GW_0^{[0]}(k)$ really is the Grothendieck-Witt group that we have introduced
 earlier. Given the two previous definitions, one can state the quadratic analogue of the previous theorem
 which is due to \cite{ST}.
\begin{theo}\label{thm:unstable-rep-GW}
Let $k$ be a field of characteristic different from $2$.
 Then for any smooth $k$-scheme $X$ and any pair of non-negative integers $(n,i)$,
 there exists an isomorphism, natural in $X$ with respect to pullbacks:
$$
[S^n \wedge (\PP^1_k)^{\wedge i} \wedge X_+,\ZZ \times \BetO]_\bullet^{\AA^1} \simeq \GW_{n}^{[-i]}(X)
$$
where $\ZZ \times \BetO$ is defined as previously.
\end{theo}
The proof is similar to that of the preceding theorem.
 Note that in this setting, the periodicity theorem now takes the form:
$$
\Omega^4_{\PP^1_k} (\ZZ \times \BetO) \simeq \ZZ \times \BetO
$$
which aligns well with the classical topological case.

\begin{rema}
\begin{enumerate}
\item In addition, to both these representability theorems,
 one also has a geometric presentation of the $k$-space
 $\ZZ \times \BGL$ (resp. $\ZZ\times \BetO$) by the infinite grassmannian
 (resp. the infinite orthogonal grassmannian). See the references already mentioned in both cases.
\item Though we now have a definition of hermitian K-theory which has all the expected properties
 in characteristic $2$, thanks to \cite{CHN}, the extension of the previous theorem
 is not clear at the moment.
\end{enumerate}
\end{rema}

\subsubsection{Motivic obstruction theory}\label{sec:obstruction}
Based on Remark~\ref{rem:vb&BGL}, \cite{MorelLNM} started a motivic homotopical study of algebraic vector bundles,
 modeled on the topological case. He sets up a general obstruction theory,
 based on an appropriate notion of Postnikov tower on the motivic homotopy category (\emph{loc. cit.} Appendix B),
 and used it (in the case of $\SL_n$) to define an Euler class for vector bundles with trivial determinant
 which refines the usual top Chern class; \emph{loc. cit.} Theorem~8.14.

As proposed by Morel, this class was later compared by Asok and Fasel to a previous definition due to \cite{BM-CHW}
 and related to the so-called Chow-Witt groups, thoroughly studied in \cite{FaselCHW} and subsequent works.
 This result started a series of works based on Morel's obstruction theory,
 culminating in the proof of a conjecture of Murthy in dimension~$4$ by \cite[Theorem~2]{AF-splitting}. 
\begin{theo}
Over an algebraically closed field of characteristic not $2$,
 a rank $3$ vector bundle over an algebraic smooth affine $4$-fold splits off a trivial line bundle
 if and only if its third Chern class vanishes.
\end{theo}
Let us explain the strategy of the proof. The main tool is the isomorphism \eqref{eq:vb&BGL},
 in the case $n=4$. The authors uses this in conjunction with the following homotopy fiber sequence\footnote{that is,
 the first term of the sequence is the homotopy fiber of the pointed map $\iota_n$ computed
 in $\HA(k)$, or equivalently, the kernel in the $\infty$-categorical sense;}
  in $\HA(k)$:
\begin{equation}\label{eq:Morel-BGL-fiber-seq}
(\AA^n-\{0\}) \rightarrow \BGL_{n-1} \xrightarrow{\iota_{n}} \BGL_{n}
\end{equation}
where $\iota_n$ is the canonical map.\footnote{One deduces this fact from \cite[Prop.~8.11]{MorelLNM}
 and the weak $\AA^1$-equivalence:
 $$\AA^n-\{0\} \simeq \GL_n/\GL_{n-1}.$$}
 Then obstruction theory gives a way to control liftings 
 of weak $\AA^1$-homotopy classes along $\iota_{n}$ in terms of the $\AA^1$-homotopy sheaves of its
 $\AA^1$-homotopy fiber $\AA^n-\{0\}$. Using the isomorphism \eqref{eq:vb&BGL}, $\iota_{n*}$ can be identified with the map
$$
\Phi_{n-1}(X)=[X,\BGL_{n-1}]^{\AA^1} \xrightarrow{\ \iota_{n*}\ } [X,\BGL_{n}]^{\AA^1}=\Phi_{n}(X), [V] \mapsto [V \oplus \AA^1]
$$
so that a rank-$n$  vector bundles $V$ splits off a trivial direct summand if and only if
 its isomorphism class $[V]$ lies in the image of $\iota_{n*}$.
 The main point in the proof of the above theorem is to show that for $n=4$,
 and under the above assumptions, the obstruction to this lifting deduced
 from \eqref{eq:Morel-BGL-fiber-seq} always vanishes.

The method used to prove this vanishing, together with the desire to extend the above result to higher
 dimensions and an initial conjecture of Morel, led the authors
 to the following conjecture in motivic homotopy theory, that stimulated many developments
 in the field.
\begin{conj}\label{conj:AF}
Let $n \geq 4$ be an integer. Then there exists an exact sequence of unramified Nisnevich sheaves (see \ref{num:unramif}):
$$
0 \longrightarrow \uKM_{n+2}/24 \longrightarrow \upi_n^{\AA^1}\!(\AA^n-\{0\}) \longrightarrow \underline{\GW}_{n+1}^{[n]}
$$
where the right-hand side denotes the Nisnevich sheaf associated to the presheaf 
 of higher Hermitian K-theory $X \mapsto \GW_{n+1}^{[n]}(X)$,
 as recalled in \ref{sec:hermit-K-th}.
\end{conj}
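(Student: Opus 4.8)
The strategy that the preceding sections point to is to reduce Conjecture~\ref{conj:AF} to a purely stable problem — the determination of the first stable motivic homotopy sheaf of the sphere spectrum, due to Röndigs–Spitzweck–Østvær — by means of a motivic Freudenthal argument, in the spirit of the work of Asok and Fasel on punctured affine spaces. \textbf{Step 1 (stabilization).} By Example~\ref{ex:A1-spheres} one has $\AA^n\setminus\{0\}\simeq S^{n-1}\wedge\GG^{\wedge n}$ in $\htp_\bullet(k)$, and by Example~\ref{ex:piA-spheres} this $k$-space is $(n-2)$-$\AA^1$-connected with $\piA{n-1}(\AA^n\setminus\{0\})=\uKMW_n$; hence the sheaf $\piA n(\AA^n\setminus\{0\})$ sits exactly two degrees above the Hurewicz range. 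The first move is to invoke the $\AA^1$-analogue of the Freudenthal suspension theorem (Morel's connectivity and stability theorems, together with the refined simplicial suspension sequence of Asok, Wickelgren and Williams): since $n\le 2(n-2)$ exactly when $n\ge 4$ — this is where the hypothesis on $n$ is used — the simplicial suspension maps
$$
\piA n\big(\AA^n\setminus\{0\}\big)\longrightarrow \piA{n+1}\big(S^1\wedge(\AA^n\setminus\{0\})\big)\longrightarrow\cdots
$$
are isomorphisms, so this sheaf is computed in the stable category of $S^1$-spectra, and then — using the connectivity estimates underlying the effective slice filtration — also after inverting $\GG$. Since $\Sus_{\PP^1}(\AA^n\setminus\{0\})\simeq\un(n)[2n-1]$, one concludes that $\piA n(\AA^n\setminus\{0\})$ is the first stable motivic homotopy sheaf of $\un$, read off in the $\GG$-weight corresponding to that twist.

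\textbf{Step 2 (the stable computation).} One now inputs the Röndigs–Spitzweck–Østvær description of the first stable motivic homotopy sheaf of $\un$ in every weight, obtained through the slice spectral sequence of $\un$: the $\eta$-linear first slice differential $d_1$ and the $24$-torsion carried by $s_1(\un)$ (encoding the motivic class $\nu$, which realizes to the order-$24$ element of $\pi_3$ of the topological sphere) together account for the $\uKM_\bullet/24$ summand, while the residual $\eta$-Bockstein tower is controlled by hermitian $K$-theory. Combined with Step~1, this yields, for $n\ge 4$, a short exact sequence of Nisnevich sheaves
$$
0\longrightarrow \uKM_{n+2}/24\longrightarrow \piA n\big(\AA^n\setminus\{0\}\big)\longrightarrow \underline\pi_1(\kq)\longrightarrow 0,
$$
where $\kq=\mathbf f_0\KQ$ is the connective (effective) cover of hermitian $K$-theory and the last term is taken in the bidegree matching the twist above. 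Composing the surjection onto $\underline\pi_1(\kq)$ with the comparison map $\kq\to\KQ$ on homotopy sheaves, and identifying $\underline\pi_1(\KQ)$ in that bidegree with the naive higher Grothendieck–Witt sheaf $\underline\GW^{[n]}_{n+1}$ of Section~\ref{sec:hermit-K-th}, produces precisely the left-exact sequence asserted in the conjecture.

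\textbf{The main obstacle.} The delicate point is this last identification: $\underline\pi_1(\kq)$ is a homotopy sheaf of the \emph{effective cover} $\mathbf f_0\KQ$, accessible only through the slices $s_q\KQ$ and their spectral sequence, whereas $\underline\GW^{[n]}_{n+1}$ is the analogous homotopy sheaf of $\KQ$ itself; the comparison map $\mathbf f_0\KQ\to\KQ$ need not be an isomorphism on $\piA 1$, which is exactly why one can only expect left-exactness — rather than a short exact sequence ending in $\underline\GW^{[n]}_{n+1}$ — and pinning down its image, equivalently the cokernel of $\piA n(\AA^n\setminus\{0\})\to\underline\GW^{[n]}_{n+1}$, demands a close analysis of the effectivity (slice) filtration on $\KQ$ in the relevant weight range. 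This is the part of the argument that at present remains to be completed.

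\textbf{Secondary difficulties and a consistency check.} Step~1 must be carried out at the level of Nisnevich sheaves on $\Sm_k$, compatibly with the connecting and extension maps, not merely over the base field; this rests on Morel's structure theorem (Theorem~\ref{thm:structure-piA}) for strictly $\AA^1$-invariant sheaves and their contractions, and forces some care over imperfect fields or in small characteristic, where the Röndigs–Spitzweck–Østvær input is cleanest after inverting $\car(k)$. One should moreover check that the unstable-to-stable comparison is compatible with the displayed extension, so that $\uKM_{n+2}/24$ is genuinely the kernel of the map to $\underline\GW^{[n]}_{n+1}$. As a final sanity check, the real realization over $\RR$ of the conjectural sheaf, evaluated on a point, should recover $\pi_n(S^{n-1})=\ZZ/2$ for $n\ge 4$, distributed between the contributions of $\uKM_{n+2}/24$ and of $\underline\GW^{[n]}_{n+1}$.
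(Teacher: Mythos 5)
The statement you are addressing is a \emph{conjecture} in the paper, not a result the paper proves: the text attributes it to Asok and Fasel, notes that Theorem~\ref{thm:first-second-smstem} is its $\PP^1$-stable avatar, and records that the conjecture itself was only recently settled in characteristic zero by Asok--Bachmann--Hopkins, with the so-called $\PP^1$-stabilization theorem (\cite{ABH}, Theorem~6.2.1) as the decisive ingredient. There is therefore no in-paper proof to match; but the paper does indicate the intended strategy, and your sketch agrees with it in outline: destabilize down from the Röndigs--Spitzweck--Østvær description of $\piAS 1(\un)$ via a Freudenthal-type argument, then identify the last term through the comparison $\kq\to\KQ$. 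Your arithmetic for the Freudenthal range ($n\le 2(n-2)\Leftrightarrow n\ge 4$), the identification $\Sus(\AA^n\setminus\{0\})\simeq\un(n)[2n-1]$, and the observation that the sequence is only left-exact because $\kq\to\KQ$ need not be an isomorphism on $\piA 1$, are all consistent with the stated conjecture and with Theorem~\ref{thm:first-second-smstem}.

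The genuine gap sits exactly where the ABH theorem is needed. You split the stabilization into an $S^1$-Freudenthal step (Morel, Asok--Wickelgren--Williams) followed by a $\GG$-suspension that you dispose of by appealing to ``connectivity estimates underlying the effective slice filtration.'' But there is no formal Freudenthal theorem for $\GG$-suspension: the failure of such an estimate is precisely why the conjecture resisted proof for years, and the $\PP^1$-stabilization theorem of \cite{ABH} is a new, non-formal result that controls the full $\PP^1$-suspension rather than being assembled from an $S^1$-step and a $\GG$-step. Your proposal cannot close this with the tools it invokes, and it would also need the characteristic-zero hypothesis that \cite{ABH} impose (the general positive-characteristic case, even after inverting the exponent, is not established). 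Finally, recall that $\AA^n\setminus\{0\}$ is $(n-2)$-$\AA^1$-connected (Example~\ref{ex:piA-spheres}), not $(n-2)$-connected-and-above with vanishing in degree $n-1$: the sheaf you want to compute, $\piA n$, is one degree above the first nonvanishing sheaf $\piA{n-1}=\uKMW_n$, so the ``two degrees above Hurewicz'' framing in your Step~1 should be corrected, though this does not affect the $n\ge 4$ range.
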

This statement complements the computation of Example~\ref{ex:piA-spheres}.
 In the case $n=3$, a similar exact sequence is established by \cite[Theorem 3]{AF-splitting}.

\subsubsection{Towards $\PP^1$-stabilization.}\label{sec:P1-freudenthal}
In the next section, we introduce the important procedure of \emph{$\PP^1$-stabilization}.
 In particular, we will states a know, \emph{$\PP^1$-stable}, version of this conjecture in Theorem~\ref{thm:first-second-smstem}.
 Building on this result, Conjecture~\ref{conj:AF} has been recently settled in characteristic zero
 by \cite[Theorem 7.2.1]{ABH}. The main ingredient of the proof is the so-called
 \emph{$\PP^1$-stabilization theorem}, \emph{loc.~cit.} Theorem~6.2.1, which is the analog
 of the Freudenthal suspension theorem --- see \eqref{eq:stem} for reference --- in motivic homotopy theory with respect to the
 sphere $\PP^1$. Along the lines of \cite{AF-splitting}, the authors also deduce Murthy's conjecture
 in arbitrary dimension over a field of characteristic $0$. We refer the reader
 to \cite{ABH} for further details.

\section{Stable motivic homotopy}\label{sec:stable}

\subsection{The Dold-Kan correspondance}\label{sec:Dold-Kan}

In classical topology, the relations between homotopy and homology
 are governed by two main results:
\begin{itemize}
\item The \emph{Dold-Kan correspondence}: it provides an explicit equivalence of categories between that of simplicial abelian groups
 and homologically non-negative complexes of abelian groups. This allows us to define an adjunction of homotopy/$\infty$ categories:
$$
\ZZ:\ihtp \leftrightarrows \iDer_{\geq 0}(\Ab):K
$$
where the right-hand side category is the subcategory of the derived $\infty$-category
 of abelian groups made by complexes whose homology is non-negative.
 The left-adjoint functor, that we denoted by $\ZZ$ as the \emph{(derived) free abelianisation functor},
 is really the functor which to a space $X$ associates the complex of singular chains $C_*(X,\ZZ)$.
 Its right adjoint $K$ has the classical property that for any abelian group $A$,
 and any integer $n \geq 0$,
$$
K(A,n)=K(A[n])
$$
where the left-hand side is the \emph{$n$-th Eilenberg-MacLane space associated with $A$},
 and, on the right-hand side, $A[n]$ denotes the complex with only one non-trivial term equal to $A$
 in homological degree $n$.\footnote{Note that $K(A,0)$ is simply equal to $A$ with the discrete topology.}
\item The \emph{Hurewicz theorem}: for a simply connected pointed space $X$, it says that $X$
 is $n$-connected if and only if the complex $\ZZ(X)=C_*(X,\ZZ)$ is concentrated in homological degree $>n$.
 That is,
$$
(\forall i \leq n,  \pi_i(X)=0) \Leftrightarrow (\forall i \leq n,  H_i(X,\ZZ)=0)
$$
\end{itemize}
This picture is beautiful, but incomplete.
 First, the homotopy category of the right-hand side is not triangulated; equivalently,
 the underlying $\infty$-category is not stable.
 Each Eilenberg-MacLane spaces $K(\ZZ,n)$ only represents a single homology group:
\begin{equation}\label{eq:KZn-represents}
[X_+,K(\ZZ,n)] \simeq H^n(X,\ZZ).
\end{equation}
Nevertheless, one can assemble the Eilenberg-MacLane spaces into a \emph{spectrum},
 using the natural suspension maps:\footnote{For example, they come out of the universal property
 given by the functorial isomorphism \eqref{eq:KZn-represents}, which gives a unique
 weak equivalence (isomorphism in $\infty$-categorical terms): $\Omega K(\ZZ,n+1)=K(\ZZ,n)$.}
$$
\sigma_n:S^1 \wedge K(\ZZ,n) \rightarrow K(\ZZ,n+1).
$$
This is the so-called Eilenberg-MacLane spectrum $H\ZZ=(\K(\ZZ,n),\sigma_n)_{n \geq 0}$.
Spectra of this kind are the objects of an $\infty$-category $\iSH$,
 called the \emph{stable homotopy category},\footnote{We refer the reader (for example) to \cite[\textsection 1.4]{LurieHA}
 for the construction of this $\infty$-category.}
 which allows us to complete the picture drawn by the Dold-Kan correspondence as follows:
\begin{equation}\label{eq:classical-DK}
\xymatrix@R=28pt@C=35pt{
\ihtp\ar@<2pt>^-{\ZZ}[r]\ar@<2pt>^-{\Sus}[d] & \iDer_{\geq 0}(\Ab)\ar@<2pt>^\iota[d]\ar@<2pt>^-{K}[l] \\
\iSH\ar@<2pt>^-{\ZZ}[r]\ar@<2pt>^-{\Lop}[u] & \iDer(\Ab).\ar@<2pt>^{\tau_{\geq 0}}[u]\ar@<2pt>^-{H}[l]
}
\end{equation}
The adjoint pair $(\Sus,\Lop)$ is made of the infinite suspension spectrum and infinite loop space functors.
 On the right-hand side, $\tau_{\geq0}$ is the truncation functor in \emph{homological notations},
 and $\iota$ the obvious inclusion.

One of the fundamental ideas of motivic homotopy,
 which is certainly the main driving insight of Voevodsky,
 is that this picture should exist in algebraic geometry as well.
 Moreover, the derived $\infty$-category of abelian groups would be replaced by the derived $\infty$-category of motivic complexes
 whose existence was conjectured by Beilinson. Finally,
 motivic cohomology would assume the universal role of singular cohomology.

\subsection{$\PP^1$-stabilization}

\subsubsection{Stabilization}
The classical stable homotopy category
 can be obtained along classical lines with an explicit model category.\footnote{In fact,
 there are two concurrent model categories in order to get a symmetric monoidal model category:
 $S$-modules after Elmendorf, Kriz and May and symmetric spectra after Hovey, Smith and Shipley.}
 Here the $\infty$-categorical framework is much more efficient.
 The $\infty$-category $\iSH$ actually satisfies two universal properties:
\begin{enumerate}
\item It is the universal symmetric monoidal $\infty$-category
 which is the target of a symmetric monoidal functor with source $\ihtp$
 which sends the sphere $S^1$ to a $\otimes$-invertible object.
\item It is the universal $\infty$-category with an exact functor 
 with source $\ihtp_\bullet$ and whose target is stable as an $\infty$-category.
\end{enumerate}
The notion of stability in $\infty$-categories is one of the great advantages
 of this formalism. It replaces the more classical notion of triangulated categories in homological algebra.
 Strikingly, it is actually a property and not an additional structure. Explicitly,
 an $\infty$-category $\iC$ is \emph{stable} if it admits all finite limits and colimits.
 In addition, a square is cartesian if and only if it is cocartesian.
 These two properties suffice to ensure that the homotopy category $\Ho\iC$ admits
 a canonical structure of triangulated category. We refer the reader to the beautiful
 and efficient presentation of \cite[\textsection 1.1]{LurieHA}.\footnote{See
 Proposition 1.1.3.4 of \emph{op. cit.} for the definition we have adopted in this paragraph.}

Let us quickly comment on those two constructions.
 The first one comes from the classical construction of spectra, recall at the beginning.
 The general construction, which consists in adding a $\otimes$-inverse of an object in a presentable monoidal
 $\infty$-category has been first written down in \cite[\textsection 2.1]{Robalo}, to which we refer.
 The second one has been obtained in \cite[\textsection 1.4.2]{LurieHA}:
 and \emph{pointed $\infty$-category with finite limits} admits a universal stabilization
 (see \emph{loc. cit.} for the precise formulation).

As announced earlier, we will follow the same path in motivic homotopy theory.
 The main difference is that we have several possible spheres (see Section \ref{sec:real})
 to choose from when considering the analogue of construction (1).
 The chosen sphere, guided by Beilinson's conjectures on motivic complexes, is the
 projective line $\PP^1_k$.
\begin{defi}\label{df:P1-stabilization}
The motivic stable homotopy $\infty$-category $\iSH(k)$ over $k$
 is the universal $\infty$-category obtained from $\ihtp(k)$ by tensor-inverting
 the $k$-space $\PP^1_k$. This construction is called the \emph{$\PP^1$-stabilization}.
 The objects of $\iSH(k)$ are called \emph{motivic spectra}.

In particular, we have a pair of adjoint functors
$$
\Sus:\ihtp_\bullet(k) \leftrightarrows \iSH(k):\Lop
$$
where $\Sus$ is a monoidal functor such that $\Sus \PP^1$ is $\otimes$-invertible.
\end{defi}
According to the decomposition of Example \ref{ex:A1-spheres},
 both spheres $S^1$ and $\GG$ become $\otimes$-invertible in $\iSH(k)$.
 In particular, the $\infty$-category $\iSH(k)$ is stable.
 We use two conventions for the various powers of spheres:
\begin{equation}
S^{n,i}=\un(i)[n]=(\Sus S^1)^{\otimes n-i} \otimes (\Sus \GG)^{\otimes i}.
\end{equation}

According to the construction of $\iSH(k)$ as the countable limit over 
 the $\PP^1$-loop space functor $\Omega_{\PP^1}$ of Example \ref{ex:loop},
 one deduces for any $k$-spaces $\cX$, $\cY$, the following computation:
\begin{equation}\label{eq:Hom-SH}
[\Sus \cX_+,\Sus \cY_+]_{\SH(k)}:=\varinjlim_n \ [(\PP^1_k)^{\wedge n} \wedge \cX_+,(\PP^1_k)^{\wedge n} \wedge\Sus \cY_+]_\bullet^{\AA^1}
\end{equation}
where the left-hand side stands for morphisms in the homotopy category $\SH(k)=\Ho\iSH(k)$.
 We simply drop the subscript when the context is clear.
 A morphism of pointed $k$-spaces $f:\cX \rightarrow \cY$
 will be called a \emph{stable weak $\AA^1$-equivalence} if it becomes an isomorphism after application of the functor $\Sus$.
 According to the previous computation, this is equivalent  to ask that there exists an integer $n>0$
 such that $(\PP^1_k)^{\wedge n} \wedge f$ is a weak $\AA^1$-equivalence.

\begin{exem}
\begin{enumerate}
\item \cite{WickelgrenDesus} showed that, over the base field $\QQ$,
 $\GG \vee \GG$ and $\PP^1_{\QQ}-\{0,1,\infty\}$ are stably weakly $\AA^1$-equivalent
 but not weakly $\AA^1$-equivalent.
\item A pointed smooth $k$-scheme $X$ is \emph{stably $\AA^1$-contractible}
 if the structural map $X \rightarrow \spec(k)$ is a stable weak $\AA^1$-equivalence.
 \cite{HKO} proved that Koras-Russel threefolds of the second kind,
 for integer positive integers $m,n,\alpha_1,\alpha_2$, such that $\alpha_1, \alpha_2 \geq 2$,
 $n\alpha_1$ and $\alpha_2$ are coprime, and a unit $a \in k^\times$:
$$
K_{m,n,\alpha_i,a}:=\{(x^n+y^{\alpha_1})^mz=t^{\alpha_2}+ax=0\} \subset \AA^4_\CC
$$
are stably weakly $\AA^1$-contractible.
 It is not known whether they are weakly $\AA^1$-contractible or not.
\end{enumerate}
\end{exem}
 
According to the decomposition $\PP^1_k=S^1 \wedge \GG$, both the spheres $S^1$ and $\GG$ become invertible in $\iSH(k)$.
 This implies that the $\infty$-category $\iSH(k)$ is stable, and one can define 
 on a motivic spectrum $\E$ shifts and twists
 by integers $n$ and $i$:
$$
\E(i)[n]:=\E\otimes (\Sus S^1)^{\otimes n-i} \otimes (\Sus \GG)^{\otimes i}.
$$
\begin{defi}
Let $\E$ be a motivic spectrum over $k$.
 One defines the cohomology of a smooth $k$-scheme $X$ in bidegree $(n,i)$ represented by $\E$
 by the formula: 
$$
\E^{n,i}(X)=\left[\Sus X_+,\E(i)[n]\right].
$$
We say that $\E$ is a \emph{ring spectrum} (resp. \emph{$E_\infty$-ring spectrum})
 if $\E$ admits a commutative monoid structure in the homotopy category $\Ho\iSH(k)$
 (resp. is a commutative monoid in the monoidal $\infty$-category $\iSH(k)$).
 In that case, one defines a product on the cohomology theory represented
 by $\E$ as usual.
\end{defi}
All ring spectra appearing in the sequel have in fact a structure of $E_\infty$-ring spectra.
 In particular, we will say abusively ring spectra for $E_\infty$-ring spectra.

\begin{exem}\label{ex:mot-spectra}
\begin{enumerate}
\item Classical cohomology theories in algebraic geometry are all representable by
 ring spectra in $\iSH(k)$:
 in characteristic $0$, integral Betti cohomology (see also below), algebraic de Rham cohomology
 in positive characteristic $p>0$, rigid cohomology
 and in all characteristics integral $\ell$-adic \'etale cohomology ($\ell \in k^\times$).
 With rational coefficients, all these theories are in fact instances of a mixed Weil cohomology
 as defined by \cite{CD2}. Their representability is proved in \emph{loc. cit.} Prop. 2.1.6, Using
 the $\AA^1$-derived category of \ref{num:A1-derived}.
\item Motivic cohomology of smooth $k$-schemes with integral coefficients is represented
 by the so-called \emph{Eilenberg-MacLane motivic spectrum} $\HMot\ZZ$ 
 defined by \cite[\textsection 6.1]{VoeICM}.
 In particular, one gets a computation in terms of Bloch's higher Chow groups:
$$
\HMot^{n,i}(X)=\CH^i(X,2i-n).
$$
 The construction of Beilinson's conjectural motivic cohomology theory
 was in fact Voevodsky's main motivation for introducing motivic homotopy theory.\footnote{Given an arbitrary ring $R$,
 one simply gets the $R$-linear Eilenberg-MacLane motivic spectrum $\HMot R$
 by applying Voevodsky's construction with $R$-linear finite correspondences,
 obtained by naively tensoring with $R$ over $\ZZ$ --- using that the groups of finite correspondences
 are free abelian groups.}
\item Algebraic K-theory (resp. higher Grothendieck-Witt groups)
 is representable by a ring spectrum $\KGL$ (resp. $\sGW$) according to \cite[Th. 15.22]{BHnorms}
 (resp. \cite{CHN}).\footnote{The first proof of this representability result
 (regardless of the $E_\infty$-ring structure)
 was given by \cite[\textsection 5.2]{RiouOper}, based on Theorem \ref{thm:unstable-rep-Kth}.
 For hermitian K-theory, the representability result was first proved by \cite{Hornb},
 in characteristic not $2$ and without the ring structure.}
 According to the periodicity isomorphisms, one has the following canonical identifications:
\begin{align*}
\KGL^{n,i}(X)&=\K_{2i-n}(X) \\
\sGW^{n,i}(X)&=\GW_{2i-n}^{[i]}(X).
\end{align*}

\item Algebraic cobordism is representable by the Thom spectrum $\MGL$,
 as first defined by \cite{VoeICM} (see \cite[Th. 16.19]{BHnorms} for the $E_\infty$-structure).
 \textcite{MLcob} provided a more concrete notion of algebraic cobordism
 which was presented in this seminar by \textcite{LoeserB}.
 In characteristic $0$, it was shown by Levine that the latter theory
 is isomorphic to the $\ZZ$-graded part of Voevodsky's theory: $\MGL^{2*,*}(-)$.
 Levine's argument was conditional on the Hopkins-Morel theorem, which was proved by \cite{HoyoisHM}
 (see also \emph{loc. cit.} Cor. 8.15).
\end{enumerate}
\end{exem}

\begin{rema}\label{rem:GW=KQ}
One also find the name \emph{hermitian K-theory} and the notation $\KQ$
 for the ring spectrum $\GW$ (see Section \ref{sec:veff-slice}).
 Both notations have their advantages.
\end{rema}

\subsubsection{Topological realizations}\label{sec:stable-real}
We consider again the notation of Section \ref{sec:real-HA1}.
 Then one deduces the following realization:
\begin{itemize}
\item Given a complex embedding $\sigma:k \rightarrow \CC$,
 one obtains the $\sigma$-Betti realization
 $\rho_\sigma:\iSH(k) \rightarrow \iSH$, which sends $\Sus X_+$ to $\Sus X(\CC)_+$.
 This is obvious according to \emph{loc. cit.}, as the (unstable) realization
 functor maps the motivic sphere $\PP^1_k$ to $S^2$.
 We refer the interested reader to \cite{AyoubAn} for Betti realizations over a base complex scheme.
\item Let us recall that one defines the genuine stable homotopy category
 of $\ZZ/2$-equivariant homotopy category by $\otimes$-inverting both the simplicial sphere $S^1$
 and the sphere $S^1$ with the antipodal action of $\ZZ/2$, usually denoted by $S^\sigma$.\footnote{The adjective genuine 
 here refers to the fact one has $\otimes$-inverted both the spheres $S^1$ and $S^\sigma$.}
 
Given a real embedding $\sigma:k \rightarrow \RR$, from the unstable realization attached to $\sigma$,
 one deduces the $\sigma$-Betti equivariant realization 
 $\rho^{\ZZ/2}_\sigma:\iSH(k) \rightarrow \iSH_{\ZZ/2}$, which sends $\Sus X_+$ to $\Sus X(\CC)_+$
 with the action of $\ZZ/2$ given by the complex conjugation.
 We refer the reader to \cite[\textsection 4.4]{HeOr} for the explicit construction.

After taking homotopy fixed points (with respect to $\ZZ/2$), one deduces
 the $\sigma$-Betti realization $\rho_\sigma:\iSH(k) \rightarrow \iSH$,
 which sends $\Sus X_+$ to $\Sus X(\RR)_+$. 
\end{itemize}

\begin{exem}\label{ex:stable-real}
In both the complex and real cases, $\rho_\sigma$ is monoidal and admits a right adjoint that we will denote by $\rho_{\sigma*}$.
When $\sigma$ is a complex embedding, and fixing an arbitrary (commutative) ring $R$,
 one deduces a motivic ring spectrum:\footnote{the $E_\infty$-structure comes for free
 from the fact the ($\infty$-)functor $\rho_{\sigma*}$ is weakly monoidal, as a right adjoint of a monoidal ($\infty$-)functor;}
$$
\HB^\sigma R:=\rho_{\sigma*}(\HH R)
$$
which, by the adjunction property, represents the the $R$-linear Betti cohomology over $k$ attached to the embedding $\sigma$.
 When $\sigma=\Id_\CC$, we simply denote by $\HB R$ this ring spectrum.
\end{exem}

\subsubsection{The $\AA^1$-derived category}\label{num:A1-derived}
The Dold-Kan correspondence recalled in Section \ref{sec:Dold-Kan} easily extends to the motivic setting.
 One first considers $\iDer(Sh(k,\ZZ))$,
 the derived $\infty$-category of abelian Nisnevich sheaves on $\Sm_k$.\footnote{This can be simply obtained
 as the localization of the nerve of the category of complexes of such sheaves with respect
 to quasi-isomorphisms, but then the monoidal structure is not obvious.
 A classical way of constructing this monoidal $\infty$-category is by using
 model structures (see e.g. \cite{CD1}). A more elegant procedure
 is to identify $\iDer(\Sh(k,\ZZ))$ with the commutative monoid objects
 of the Nisnevich $\infty$-topos associated the smooth site $\Sm_k$, equipped with its cartesian
 monoidal structure. Then the desired monoidal structure comes from \cite[Ex. 3.2.4.4]{LurieHA}.}
 Then one follows the recipe used to construct the stable motivic homotopy category,
 replacing $X_+$ by the abelian sheaf $\ZZ(X)$ represented by a smooth $k$-scheme $X$:
 one considers its $\AA^1$-localization as in Definition \ref{df:A1-local},
 and then its $\PP^1$-stabilization as in the preceding definition.
 The resulting category is denoted by $\iDA(k)$ and called, after Morel,
 the ($\PP^1$-stable) $\AA^1$-derived category over $k$. The Dold-Kan correspondence
 formally extends as an adjunction:
$$
\ZZ_{\AA^1}:\iSH(k) \leftrightarrows \iDA(k):\uH_{\AA^1}.
$$

\begin{rema}\label{rem:Morel-Hurewicz}
One of the main theorems of \cite{MorelLNM} that we have not yet discussed,
 is the motivic analogue of the Hurewicz theorem, which discusses the properties
 of the composite functor $\ihtp_\bullet(k) \xrightarrow{\Sus} \iSH(k) \xrightarrow{\ZZ_{\AA^1}} \iDA(k)$.
 To avoid inflating indefinitely this presentation, we refer the reader to \emph{loc. cit.}
 Section 6.3. 
\end{rema}

\subsubsection{Motivic complexes}\label{sec:motivic-real}
Let $k$ be a perfect field.
 Voevodsky's theory of motivic complexes was exposed in the Bourbaki seminar by \cite{FriedB}.
 For our needs, we need to consider a slightly bigger category,
 the ``big'' derived $\infty$-category $\iDM(k)$ of mixed motives over $k$.
 As above, it is obtained by applying the $\AA^1$-localization and $\PP^1$-stabilization
 procedure to the the monoidal derived $\infty$-category of the abelian category of sheaves with transfers over $k$
 (\emph{loc. cit.} Section 2).\footnote{This procedure can be realized via
 an explicit monoidal model structure, according to \cite[Th. 11]{ROMZ}
 or \cite[Def. 11.1.1]{CD1}.}

Then one can refine the previous Dold-Kan correspondence
 and introduce the following \emph{motivic realization functor}:
$$
M:\iSH(k) \xrightarrow{\ZZ_{\AA^1}} \iDA(k) \rightarrow \iDM(k).
$$
The last map is derived from the functor which ``adds transfers'' to an
 abelian Nisnevich sheaf.\footnote{see \cite[\textsection 2.2.1]{ROMZ} or \cite[\textsection 11.2.16]{CD3}.}
 The functor $M$ is monoidal, and sends the infinite suspension $\PP^1$-spectrum
 $\Sus X_+$ to the motivic complex $M(X)$.

\bigskip

The following beautiful theorem, due to \cite[Th. 25]{BachMot} is an avatar of the Hurewicz theorem,
 which in some sense strengthen the results of Morel recalled in the above remark.
\begin{theo}
Let $k$ be a perfect field with finite $2$-cohomological dimension.
 Then the motivic realization functor $M:\iSH(k) \rightarrow \iDM(k)$ is conservative
 when restricted to compact motivic spectra, i.e., it detects isomorphisms between such objects.
\end{theo}
Classically in this context, a motivic spectrum $\E$ is compact if the functor $[\E,-]$ commutes with coproducts.
 A nice feature of the Nisnevich topology is that this condition is equivalent to ask that
 $\E$ is in the subcategory of $\iSH(k)$ generated by extensions and finite sums of motivic spectra of the
 form $\Sus X_+(i)[n]$. One says that $\E$ is \emph{constructible} or \emph{geometric}.

\begin{rema}\label{rem:mot-modules}
Formally, the monoidal functor $M$ admits a right adjoint $\uH_M:\iDM(k) \rightarrow \iSH(k)$.
 It follows from the adjunction property that $\uH_M(\un)$ is a motivic spectrum which represents
 motivic cohomology. For an arbitrary ring $R$, one gets the $R$-linear version $\HMot R$ by using 
 the $\infty$-category of $R$-linear motivic complexes.
 In fact, coming back to the underlying model categories, it is exactly the motivic
 spectrum described by \textcite[\textsection 6.1]{VoeICM}.
 The advantage of this presentation is that, as a right adjoint of a monoidal functor,
 $\uH_M$ is weakly monoidal. This immediately implies that $\HMot\ZZ=\uH_M(\un)$ is an $E_\infty$-ring spectrum.

Moreover, this allowed \textcite[see Th. 58, Th. 68]{ROMZ} to build a monoidal functor
 $\HMot\ZZ\!-\!\mathrm{mod} \rightarrow \iDM(k)$ with, as source, the category of modules over the motivic ring spectrum,
 and to deduce that it is an equivalence
 of $\infty$-categories when $k$ is of characteristic $0$, or after tensoring with $\QQ$.
\end{rema}

\subsection{Motivic stable stems and Morel degree}

\subsubsection{Stable stems}\label{sec:stab-stem}
According to the classical terminology, due to \textcite{Freudenthal}
 who used the German word \emph{$n$-Stamm},\footnote{see the first paragraph of \emph{loc. cit.}}
 one defines for an integer $n \geq0$ the \emph{$n$-stem} as the
 $n$-th extension group of the sphere spectrum:
$$
\piS n:=[\Sus S^n,\Sus S^0]_{\SH}=\varinjlim_r [S^{n+r},S^r]_{\htp_\bullet}=\pi_{n+r}(S^r) \text{ for } r\geq n+2.
$$
The last isomorphism follows from Freudenthal suspension theorem, \emph{loc. cit.}
 According to well-known facts from algebraic topology,
 $\piS *$ is a non-negatively graded ring and $\piS 0=\ZZ$.
 Moreover, Serre's finiteness theorem implies that, for $n>0$, $\piS n$ is finite.
\begin{defi}
For any pair of integers $(n,i) \in \ZZ^2$, and any field $k$, 
 the \emph{motivic $(n,i)$-stable stem} over $k$ is
$$
\pi_{n,i}^k:=[S^{n,i},S^{0,0}]_{\SH(k)}.
$$
\end{defi}
The motivic stable stem is a rich invariant of fields. It can be connected
 to other important invariants.
 First, the constant simplicial sheaf functor induces a morphism of $\ZZ$-graded ring
\begin{equation}\label{eq:cl2mot-stem}
\piS * \rightarrow \pi_{*,0}^k.
\end{equation}
Moreover, the motivic realization functor induces a morphism of bigraded rings:
\begin{equation}\label{eq:mot2Kth-stem}
\pi_{*,*}^k \rightarrow \HMot^{-*,-*}(k)
\end{equation}
inverting all indices on the right,
 as we have used homological (cohomological) conventions for the left-hand (resp. right-hand) side.

The determination of the motivic stable stem can be reduced to computations in the (unstable) motivic homotopy
 category according to formula \eqref{eq:Hom-SH}. Therefore, as a corollary of Morel's fundamental
 computation, one knows at least some portion of these groups.
\begin{theo}\label{thm:Morel-stable}
For any field $k$ and all integers $n \geq i$, one gets:
$$
\pi_{n,i}^k=\begin{cases}
\KMW_{-n}(k) & \text{if } n=i, \\
0 & \text{if } n<i.
\end{cases}
$$
\end{theo}
This result should be considered as the analogue of the description
 of the negatively graded part of the stable stem.
 In degree $0$, one gets the fundamental identification: $\pi^k_{0,0}=\GW(k)$.

The morphism \eqref{eq:cl2mot-stem} in degree $0$ is the obvious
 canonical map $\ZZ \rightarrow \GW(k)$. In particular, it is not an isomorphism
 for non-quadratically closed fields.

In fact, one should be careful that the constant sheaf functor,
 left adjoint to the evaluation at the base field $k$,
 which is a fiber functor of the Nisnevich site $\Sm_k$,
 does induce a fully faithful functor $\ihtp \rightarrow \ihtp(k)$.\footnote{We leave this as an exercise
 to the reader.} But the $\PP^1$-stabilization procedure has introduced a non-trivial phenomena
 and the induced functor $c:\iSH \rightarrow \iSH(k)$
 is not fully faithful in general, as seen by the previous example.
 However, one has the following remarkable result of \cite{LevRig}.
\begin{theo}\label{thm:LevinePi}
When $k$ is an algebraically closed field of characteristic $0$,
 the map $\piS * \rightarrow \pi_{*,0}^k$ of \eqref{eq:cl2mot-stem}
 is an isomorphism.
\end{theo}
One deduces that under the above assumption on $k$,
 the above functor $c$ is in fact fully faithful:
 this easily follows as $\iSH$ is generated by the sphere spectrum.
 This theorem is analogous to Suslin's rigidity theorem in algebraic K-theory,
 and in fact, it uses some version of rigidity for Suslin homology, due to Suslin
 and Voevodsky.\footnote{See Section \ref{sec:hmot-torsion} for more information on this point.}
 The core argument of the proof consists in analyzing two slice spectral
 sequences, and most notably, proving their strong convergence.

\subsubsection{Morel's plus-minus decomposition}\label{num:pm}
Identifying $\pi_{0,0}^k$ with $\GW(k)$ and using notation from Sections \ref{num:GW} and \ref{sec:KMW},
 we consider $\epsilon=-\qd{-1}$ as an endomorphism of the unit $\un$ of $\iSH(k)$.
 Then it follows that $\epsilon$ is an idempotent automorphism of $\un$.
 After inverting $2$ in $\iSH(k)$, one deduces, 
 two complementary orthogonal projectors of the object $\un[1/2]$:
$$
p_+=\frac{1-\epsilon}2, \ p_-=\frac{1+\epsilon}2.
$$
This implies that any motivic spectrum $\E$ on which $2$ is invertible decomposes as
\begin{equation}\label{eq:pm}
\E=\E^+ \oplus \E^-,
\end{equation}
where $\E^+$ (resp. $\E^-$) is the image of $p_+$ (resp. $p_-$)
 and called the plus-part (resp. minus-part) of $\E$.
 By construction, the action of $\epsilon$ on $\E^+$
 (resp. $\E^-$) becomes $(+1)$ (resp. $(-1)$).
The following result gives a beautiful interpretation of the rational
 motivic stable stem.
\begin{theo} For an arbitrary field $k$ and all integers $(n,i) \in \ZZ^2$,
 the decomposition \eqref{eq:pm} induces the following identifications:
\begin{align*}
\pi_{n,i}^{k}\otimes \QQ&=\left(\pi_{n,i}^{k+}\otimes \QQ\right),
 \oplus \left(\pi_{n,i}^{k-}\otimes \QQ\right), \\
\pi_{n,i}^{k+}\otimes \QQ& \simeq \HMot^{-n,-i}(k)_\QQ=K_{n-2i}^{(-i)}(k)_\QQ \\
\pi_{n,i}^{k-}\otimes \QQ& \simeq \begin{cases}
W(k)_\QQ & \text{if } n=i, \\
0 & \text{if } n \neq i.
\end{cases}
\end{align*}
The first isomorphism is induced by the map \eqref{eq:mot2Kth-stem},
 while the second one is induced by that of the Theorem \ref{thm:Morel-stable}.
 One deduces that $\pi_{n,i}^{k-}$ is torsion whenever $n \neq i$,
 or in all cases if $(-1)$ is a sum of squares in $k$.\footnote{e.g. if $k$
 is quadratically closed or a field of positive characteristic.}
\end{theo}
For the plus part, we refer the reader to \cite[Theorem 16.2.13]{CD3}. This result was 
 first announced by Morel, and indeed, the proof of \emph{loc. cit.}
 critically rests on Morel's previous theorem, as well as on the homotopy
 $t$-structure (see Section \ref{sec:stab-htp-t}). It also uses the rational motivic spectrum $\KGL$
 and its decomposition under the Adams-operation, as was obtained by \cite{RiouOper}.
 The final argument is a description of the projector on the plus-part
 as taking the homotopy cofiber with respect to $\eta$.

The computation of the minus-part was obtained by \cite[Theorem 5]{ALP}.
 It also uses the homotopy $t$-structure, and the description on the 
 projector on the minus-part by the so-called operation of inverting $\eta$.

\subsubsection{The action of the algebraic Hopf map}
Motivated by the preceding sketch of proof, 
 we show how Morel's plus-minus decomposition
 is related to the algebraic Hopf map.
 Consider a motivic spectrum $\E=\E[1/2]$ as in Section \ref{num:pm}.
 Let us consider the algebraic Hopf map $\eta$, geometrically defined in Section \ref{ex:symbols-HA1},
 as a morphism $\eta:S^{1,1}=\un(1)[1] \rightarrow S^{0,0}=\un$ of motivic spectra.
 Let us formulate some relations which hold in the Milnor-Witt ring of $k$:
\begin{align*}
\epsilon.\eta&=\eta, \\
\rho.\eta&=-1-\epsilon,\text{ where } \rho=[-1].
\end{align*}
 One deduces that the action of $\eta$ on $\E^+$ (resp. $E^-$) becomes trivial (resp. invertible).
 Moreover, one gets the following (homotopy) exact sequence in $\iSH(k)$
 (obtained by tensoring the analogous sequence for $\E=\un[1/2]$):
$$
\E(1)[1] \xrightarrow \eta \E \rightarrow \E^+.
$$
The minus part is obtained by formal inversion of $\eta$
$$
\E^-=\E[\eta^{-1}],
$$
where one defines $\E[\eta^{-1}]$ as the homotopy colimit of the following tower:
$$
\E \xrightarrow{\eta} \E(-1)[-1] \xrightarrow{\eta} \E(-2)[-2] \hdots
$$

\begin{rema}
In fact, the previous theorem can be extended as a computation of the whole rational motivic stable
 $\infty$-category $\iSH(k) \otimes \QQ$. Indeed, the plus-minus decomposition extends at the $\infty$-categorical level,
 and one can identify its plus-part with the $\infty$-category of rational mixed motivic complexes $\iDM(k) \otimes \QQ$
 and its minus-part as the modules over the \emph{rational unramified Witt sheaf}.
 These computations are motivic extensions of the fact that the stable Dold-Kan adjunction $(\ZZ,H)$
 of \eqref{eq:classical-DK} induces an equivalence after rationalization.

We refer the interested reader to \cite[Theorem  16.2.13]{CD3} and \cite[Theorem 7]{ALP}.
 Let us further illustrate these techniques with the following theorem of 
 \cite[Theorem 35 and Proposition 36]{BachRho}.
\end{rema}
\begin{theo}\label{thm:Bachamn-real}
Consider the endomorphism $\rho:\un \rightarrow \un(1)[1]$ as defined above,
 associated with the unit $(-1) \in k^\times$.

The localization procedure with respect to $\rho$, described above, allows to define
 the stable motivic $\infty$-category $\SH(k)[\rho^{-1}]$ of $\rho$-inverted objects.
 Then the real realization functor induces an equivalence of monoidal $\infty$-categories:
$$
\iSH(\RR)[\rho^{-1}] \rightarrow \iSH.
$$
Further, for an arbitrary field $k$, one obtains an equivalence
$$
\iSH(k)[\rho^{-1}] \rightarrow \iSH(k_{r\et}),
$$
where the right-hand side is the $S^1$-stable $\infty$-category associated with Scheiderer real-\'etale site of $k$.
\end{theo}
As a corollary (\emph{loc. cit.} Cor. 42),
 one obtains the following description of the $\rho$-inverted
 motivic stable stem of a real closed field $k$, as a bigraded ring:
$$
\pi_{**}^k[\rho^{-1}] \simeq \piS *\,[\rho^{-1}]\,.
$$
Here the letter $\rho$ on the right-hand side stands for a variable of bidegree $(-1,-1)$.

\subsection{Stable motivic homotopy sheaves}\label{sec:stab-htp-t}
 
Following Morel, as in the unstable case, we adopt the following definition.
\begin{defi}
Let $\E$ be a motivic spectrum over $k$.
 For any integer $n \in \ZZ$, the \emph{$n$-th motivic stable homotopy sheaf}
 of $\E$ is the $\ZZ$-graded (Nisnevich) sheaf $\piAS n(\E)$ on $\Sm_k$ associated
 with the following presheaf
$$
V \mapsto \big[\Sus V_+,\E(r)[r-n]\big], r \in \ZZ.
$$
\end{defi}
If we want to refer to the $\ZZ$-grading of this kind of sheaves, one uses the notation
 $\upi_n^{\AA^1}(\E)_r$. This numbering can seem awkward but it will be justified
 here below.

\begin{exem}
\begin{enumerate}
\item Recall from Section \ref{num:points} that a separated function field $F/k$
 defines a fiber functor of the smooth site on $\Sm_k$.
 By a base change argument, one obtains the following computation:
$$
\piAS n(\un)_r(F)=\pi_{n-r,-r}^F.
$$
In particular, one gets $\piAS 0(\un)_r(F)=\KMW_r(F)$, according to Theorem \ref{thm:Morel-stable}.
\item Let $\E$ be a motivic spectrum, and $\E^{**}$ be the associated bigraded cohomology theory.
 Then, the motivic sheaves $\piAS n(\E)$ have already famously been considered by \cite{BO}, in their extension
 of the Gersten conjecture. More precisely, one has the relation
$$
\piAS n(\E)_r=\underline \E^{r-n,r}
$$
where we have denoted by $\underline \E^{n-r,r}$ what is usually called the \emph{unramified cohomology}
 associated with $\E$ (denoted with curly letters in the first page \emph{loc. cit.}).
 As an example, one deduces that, over any perfect field $k$
$$
\piAS 0(\HMot\ZZ)=\uKM_*,
$$
where the right-hand side is the unramified Milnor K-theory sheaf (see Paragraph \ref{sec:unramifKM-W}).
\end{enumerate}
\end{exem}

\begin{rema}
Consider the notations of second point in the preceding example.
 In general, with the help of the six functors formalism,
 $\E^{**}$ can indeed be extended to a Poincaré duality theory with support in the sense
 of \cite{BO}, except that one has to modify properties (1.3.3) (Fundamental class) and (1.3.4) (Poincaré duality), 
 in order to take into account twists by Thom spaces (defined in Example \ref{ex:A1-spheres}).
 We refer the reader to \cite{DJK} for this extension --- summarized in the notion of (twisted)
 bivariant theory.
 We have already seen the need for Thom spaces, when considering particular forms of the Gersten resolution
 of an unstable motivic homotopy sheaf in Section \ref{sec:Gersten-resol}.
\end{rema}

The following result, analogous to Theorem \ref{thm:structure-piA}, is also due to \cite[Corollary 6.2.9]{MorelStab},
 using \cite{HKGab} when the base field $k$ is finite.
\begin{theo}
Let $k$ be a perfect field.
 Then for any motivic spectrum $\E$ over $k$ and any integer $n \in \ZZ$,
 the $\ZZ$-graded sheaf $\piAS n(\E)$ is unramified and strictly $\AA^1$-invariant
 (as in Theorem \ref{thm:structure-piA}).
\end{theo}
The key point of the proof is the so called \emph{stable $\AA^1$-connectivity theorem},
 which states that the $\AA^1$-localization functor on the $\infty$-category
 of $S^1$-spectra associated with the $\infty$-category of (Nisnevich) sheaves on $\Sm_k$
 (\emph{aka} the stabilization of the Nisnevich $\infty$-topos on $\Sm_k$)
 respects connectivity in the sense of the canonical (Postnikov) $t$-structure.
 See \cite{MorelStab}, Theorem 6.1.8.

\subsubsection{Homotopy modules}
The $\ZZ$-graduation of a stable motivic homotopy sheaf $F_*=\piAS n(\E)_*$ is not arbitrary.
 First, note that one gets a tautological split homotopy exact sequence of motivic spectra:
$$
\Sus \spec(k)_+ \xrightarrow{s_{1*}} \Sus (\GG)_+ \rightarrow \Sus \GG=\un(1)[1].
$$
By definition of Voevodsky's $(-1)$-construction, Section \ref{sec:Gersten-resol}(2),
 this yields a canonical isomorphism for any integer $n \in \ZZ$:
\begin{equation}
\epsilon_n:(F_n)_{-1} \rightarrow F_{n-1}.
\end{equation}
\begin{defi}\label{df-htp-mod}
A \emph{homotopy module} over $k$ is a $\ZZ$-graded Nisnevich sheaf $F_*$ on $\Sm_k$
 which is strictly $\AA^1$-invariant and equipped with an isomorphism $\epsilon_*$
 of the above form.
 Morphisms of homotopy modules are natural transformations of $\ZZ$-graded sheaves,
 homogeneous of degree $0$, compatible with the structural isomorphisms $\epsilon_*$.

We let $\HM(k)$ be the category of homotopy modules over $k$.
 Given a homotopy module $F_*$ and an integer $i \in \ZZ$,
 one defines the $i$-twisted homotopy modules $F_*\tw i$ such that $(F_*\tw i)_r=F_{r+i}$.
\end{defi}

\begin{exem}\label{ex:MW-mod}
Over a perfect field $k$, the fundamental result of \cite{Feld2} defines an equivalence of categories
 which allows us to describe homotopy modules as certain $\ZZ$-graded functors $M_*$ on function fields over $k$ called \emph{Milnor-Witt cycle modules},
 inspired by the theory cycle modules due to \cite{Rost}.\footnote{And later developments due to his PhD student Manfred Schmid
 on the so-called Rost-Schmid complex.}

To such a functor $M_*$ and any smooth $k$-scheme $X$, Feld attaches an explicit complex $C^*(X,M)$
 of $\ZZ$-graded abelian concentrated in non-negative cohomological degrees.
 It has the form described by \eqref{eq:Rost-Schmid}.
 He then shows that the zero cohomology of this complex, which for a connected $X$ can be described as:
$$
\underline M_*(X)=\Ker\big(M_*(\kappa(X)) \rightarrow \oplus_{x \in X^{(1)}} M_*(\kappa(x))\{\nu_x\}\big),
$$
does define a homotopy module over $k$.
 Examples are given by the Milnor-Witt functor $\KM_*$ defined in Section \ref{sec:KMW}, as well as Milnor K-theory $\KM_*$,
 extending the considerations of Paragraph \ref{sec:unramifKM-W}.
\end{exem}

When $k$ is a perfect field,
 stable motivic sheaves are particular instances of homotopy modules.
 In fact, one easily deduces from the preceding theorem and the stable $\AA^1$-connectivity
 theorem the following result of \cite[\textsection 5.2]{MorelIntro}.
\begin{theo}\label{thm:htp-t-struct}
Assume $k$ is perfect.

There exists a unique $t$-structure on $\SH(k)$, called the \emph{homotopy $t$-structure},
 whose homologically non-negative (resp. negative)
 objects are the motivic spectra $\E$ over $k$ such that
$$
\piAS n(\E)=0 \text{ if } n<0 \quad (\text{resp. } n\geq 0).
$$
This $t$-structure is compatible with the monoidal structure on
 $\SH(k)$.\footnote{\label{fn:t-tensor-compatible} The unit object $\un$ is non-negative and 
 the tensor product respect non-negative objects.}

The canonical functor $\piAS 0:\SH(k) \rightarrow \HM(k)$
 induces an equivalence of categories $\SH(k)^\heartsuit \rightarrow \HM(k)$.
 We will denote by $\uH:\HM(k) \rightarrow \SH(k)$ the $\infty$-functor obtained
 by composition of the reciprocal equivalence and the natural inclusion of the heart.
\end{theo}
This implies that $\HM(k)$ is a monoidal Grothendieck abelian category.
 The formula for the tensor product is: $F_* \otimes^H G_*=\piAS 0\big(\uH(F_*) \otimes \uH(G_*)\big)$.

\begin{rema}\label{rem:htp-non-perfect}
The results of this theorem can be extended to the case when $k$ is a non perfect field of positive
 characteristic $p$, up to inverting $p$ in $\iSH(k)$. One can reduce to the perfect case
 by a limit argument and by using Lemma B.3 proved by \cite{LYZR}.
\end{rema}

\begin{exem}\label{ex:piSA}
All this theory shows that the motivic stable stem admits a strong
 structure. First, as a functor on function fields over $k$,
 it can be organized into a Milnor-Witt cycles module in the sense of Example \ref{ex:MW-mod}.
 Moreover, this uniquely corresponds to a homotopy module. This is valid for any
 motivic spectrum in place of the motivic sphere spectrum $\un=\Sus S^0$.
 This connects with the unstable computations in Example \ref{ex:piA-spheres}.
\begin{enumerate}
\item  We deduce from Theorem \ref{thm:Morel-stable} the following fundamental result:
$$
\piAS n(\un)=\begin{cases}
0 & \text{if } n<0, \\
\uKMW_* & \text{if } n=0.
\end{cases}
$$
It follows that $\uKMW_*$ is the unit of the monoidal structure on $\HM(k)$.
 It is therefore a (commutative) monoid object, and every object of $\HM(k)$
 admits a canonical $\uKMW_*$-module structure.
\item  One deduces from elementary vanishing in motivic cohomology that $\HMot \ZZ$ is a non-negative spectrum
 for the homotopy $t$-structure.\footnote{This follows from the fact homotopy modules are unramified
 as $\ZZ$-graded sheaves and from the vanishing $\HMot^{n,i}(k,\ZZ)=0$ if $n>i$,
 which is proved by \cite[Lemma 3.2(2)]{SV-BK}.}
 We have already mentioned that $\piAS 0(\HMot\ZZ)=\uKM_*$.
 Moreover, the unit $o:\un \rightarrow \HMot\ZZ$ of the ring spectrum $\HMot\ZZ$ induces the canonical projection map 
$$\uKMW_*=\piAS 0(\un) \xrightarrow{o_*} \piAS 0(\HMot\ZZ)=\uKM_*=\uKMW_*/(\eta)$$
explaining Remark \ref{rem:DM-modulo-eta}.

More generally, it can be seen
 that the right adjoint $\HMot:\iDM(k) \rightarrow \iSH(k)$ is $t$-exact for Voevodsky's homotopy $t$-structure
 on the left-hand side, and the induced functor on the heart is fully faithful with essential image
 the homotopy module on which $\eta$ acts trivially. Using Feld's equivalence of categories,
 as stated in Example \ref{ex:MW-mod}, the category of homotopy modules with a trivial action of $\eta$ can be identified with
 that of cycle modules, as defined by \cite{Rost}.\footnote{See \cite{Deg10} for all these assertions.}
\item The previous result can be extended to other oriented ring spectra.
 The unit of the cobordism ring spectrum $o:\un \rightarrow \MGL$
 does induce an isomorphism on the $0$-th stable motivic sheaves: $\piAS 0(\MGL) \simeq \uKM_*$.
 In characteristic $0$, this result was extended by \cite[Theorem 3.6.3]{YakMSL} to the special-linear cobordism ring spectrum $\MSL$
 showing that:
$$
\piAS 0(\MSL) \simeq \uKMW_*.
$$
\item The case of algebraic K-theory is different, as $\KGL$ is a $S^{2,1}$-periodic
 motivic ring spectrum. In particular, it is unbounded with respect to the homotopy
  $t$-structure and one has for any integer $i \in \ZZ$
$$
\piAS i(\KGL)=\underline \K_*\tw i,
$$
where the right-hand side is the unramified K-theory sheaf.
\item  Similarly, higher Grothendieck-Witt groups are $S^{8,4}$-periodic and one gets in particular:
$$
\piAS{i+4}(\sGW)=\piAS{i}(\sGW)\{4\}.
$$
\end{enumerate}
\end{exem}

\subsubsection{Slice filtration}\label{num:slice}
Note that the situation described in point 4 of the above example contrasts with the topological context,
 where the complex K-theory spectrum in degree $0$ coincides with the Eilenberg-MacLane
 spectrum, therefore leading to the (topological) Atiyah-Hirzebruch spectral sequence.

This fact led Voevodsky to introduce the \emph{slice filtration} on a motivic spectrum $\E$,
 an avatar of the cellular filtration in the motivic world except that cells are given
 by the motivic sphere $\PP^1_k$:\footnote{beware that this tower is possibly infinite in both directions;}
$$
\hdots \rightarrow f_n\E \rightarrow \hdots \rightarrow f_1\E \rightarrow f_0\E \rightarrow f_{-1}\E \rightarrow \hdots
$$
 He defined the $n$-th \emph{slice} $s_n(\E)$ of $\E$ as the homotopy cofiber of $f_{n+1}\E \rightarrow f_n\E$.

Then, \cite[Conjectures 7, 10]{VoeOpen}
 postulated that  the $0$-slice of both $\KGL$ and the sphere spectrum $\un$ are given by the the Eilenberg-MacLane
 motivic spectrum: 
$$s_0(\un)=\HMot, \ s_n(\KGL)=\HMot(n)[2n].$$
 The first conjecture was proved by Voevodsky in characteristic $0$. 
 Then, \cite{LevTow} proved both conjectures over a perfect infinite base field $k$.\footnote{The assumption
 that $k$ is infinite can be removed thanks to \cite{HKGab}.}
 This lead to a new definition of the motivic version of the Atiyah-Hirzebruch spectral sequence\footnote{whose existence
 was conjectured by \cite[\textsection 5, B.]{BeiHP};} for a smooth $k$-scheme $X$:
 it can be described as the spectral sequence associated to the slice filtration on $\KGL$, as conjectured by Voevodsky and proved
 in \cite{LevTow}. See also \cite[Th. 8.5, 8.7]{HoyoisHM} for another approach.

Note that another construction of the motivic Atiyah-Hirzebruch spectral sequence was already done by \cite{FS-AH}.
 As of now, the agreement of the latter with the slice spectral sequence of the K-theory spectrum $\KGL$
 is not known. Also, the convergence of the slice spectral sequence in general is a difficult matter,
 solved in \cite{LevConv}. See also \cite[Th. 8.12]{HoyoisHM}.

\subsubsection{Morel's $\pi_1$-conjecture and beyond}\label{sec:veff-slice}
Before stating the last main computation of this section,
 let us recall that
 \cite{OSveff} considered the interaction between Morel's homotopy $t$-structure and Voevodsky's slice filtration.
 This led them to introduce an important ring spectrum, called the \emph{very effective hermitian K-theory}.\footnote{Beware
 about Remark \ref{rem:GW=KQ} concerning the terminology used here.}
 It is defined as:
$$
\kq=f_0(\tau_{\geq 0}\KQ)
$$
where $\tau_{\geq 0}$ is the truncation functor with respect to Morel's homotopy $t$-structure and $f_0$ 
 the first stage of the slice filtration. It admits a canonical structure of commutative algebra in the $\infty$-category
 $\iSH(k)$ (i.e., it is an $E_\infty$-spectrum). It is proposed in \emph{loc. cit.} that $\kq$ plays a role analogous
 to the connective cover of real topological $K$-theory in algebraic topology.

The following result of \cite{ORS} emerged from a far-reaching conjecture of Morel,
 and was the motivation for many works in motivic homotopy theory.
 It was stimulated by Conjecture~\ref{conj:AF} stated by Asok and Fasel,
 and utimately led to its proof in characteristic zero; see \ref{sec:P1-freudenthal}.
\begin{theo}\label{thm:first-second-smstem}
Assume the characteristic exponent $c$ of $k$ is different from $2$.
\begin{enumerate}
\item  The unit $u:\un \rightarrow \kq$ of the very effective hermitian K-theory
 induces a short exact sequence of homotopy modules after inverting $e$:
$$
0  \rightarrow \uKM_*/24\tw 2 \rightarrow \piAS 1(\un) \xrightarrow{u_*} \piAS 1(\kq) \rightarrow 0
$$
Looking at the zero-th graded part and evaluating at $k$, the sequence becomes the following explicit
 computation of the motivic stable stem, which was conjectured by Morel:
$$
0 \rightarrow \KM_2(k)/24 \rightarrow \pi_{1,0}^k \rightarrow k^\times/2 \oplus \ZZ/2 \rightarrow 0.
$$
\item After application of the second motivic homotopy sheaf, $u$ induces the following short exact sequence of homotopy modules after inverting $e$:
$$
0  \rightarrow \piAS 1(\HMot\ZZ)_*/24\tw 2 \oplus \uKM_*/2\tw 4 \rightarrow \piAS 2(\un) \xrightarrow{u_*} \piAS 2(\kq).
$$
Looking at the first graded part and evaluating at $k$, one further obtains the split short exact sequence,
 after inverting $e$:
$$
0 \rightarrow \KM_3(k)/2 \rightarrow \pi_{3,1}^k \rightarrow \mu_{24}(k) \rightarrow 0.
$$
\end{enumerate}
\end{theo}
Based on fundamental results on the slice and very effective slice filtrations,
 such as Levine's convergence result for the former, the proof consists of a very involved computation
 of the slice spectral sequences for both the sphere and very effective hermitian K-theory spectra.

\section{Application to stable stems}

\subsection{Computing the stable stems: a short review}\label{sec:histo-stem}

Since the initial introduction of the fundamental group by \cite{Poin},
 and its subsequent extension to higher homotopy groups by his successors,
 the computations of homotopy groups has been a driving force in homotopy theory.
 The case of higher-dimensional spheres remains a central
 problem to this day.

While Poincaré initiated some of the first computations
 (via coverings and polyhedral decompositions of varieties), the main advancements in the first half
 of the 20th century were the following ones:
\begin{itemize}
\item \textcite{Hopf} constructed the \emph{Hopf map}, that we will denote by $\etat$,
 and carried out the first computation of higher homotopy groups: $\pi_3(S^2)=\ZZ.\etat$.
\item \textcite{Hurewicz}
 proved the theorem that now bears his name,
 allowing the computation of higher homotopy groups in terms of homology groups.
 As mentioned by Hurewicz in \emph{op. cit.}, this result implies:
$$
\pi_i(S^n)=\begin{cases}
0 & \text{if } i<n, \\
\ZZ& \text{if }i=n.
\end{cases}
$$
\item For an integer $i \geq 0$, \textcite{Freudenthal} established the theorem that bears his name,
 proving that the suspension map is an isomorphism in the following cases:
\begin{equation}\label{eq:stem}
\pi_{i+n}(S^n) \xrightarrow \sim \pi_{i+n+1}(S^{n+1}) \text{ if } n>i+1.
\end{equation}
This implies the existence of the \emph{$i$-stem},
 which we have already denoted by $\piS i$ in Section~\ref{sec:stab-stem}.
\end{itemize}
A new era was inaugurated by the PhD thesis of \cite{SerrePhD},
 which introduced fibrations (notably the path fibration) and spectral sequences
 in this area. It provided a systematic method for computing homotopy groups of
 spheres and demonstrated the finiteness of most of these groups, particularly the non-zero stable stems.\footnote{Around the same time,
 \cite{WhiteheadEHP1} and \cite{WhiteheadEHP2}, building on Blakers and Massey's notion of triads,
 introduced the \emph{EHP sequence}, which also provides a general inductive method for computing homotopy groups of spheres.}

From this background, a wealth of techniques and theories have emerged,
 partly motivated by the problem of unveiling the stable stem. Let us indicate some landmark
 results, and provide relevant details for this note thereafter.
\begin{itemize}
\item Motivated by the Kervaire invariant problem and the aim of improving the computatibility of Serre's spectral sequences,
 \textcite{AdamsSSp} introduced the Adams spectral sequence,
 which uses the homological algebra of the Steenrod algebra to compute the $p$-torsion
 in the stable stems.
\item \textcite{NovikovSSp} extended Adams's construction to any (appropriate) spectrum in place of the Eilenberg-MacLane spectrum $\sH\FF_p$
 and advocated for the use of the complex cobordism spectrum $\MU$.
\item The final point we wish to highlight is not a single work but rather a collection of results
 and a unifying philosophy,
 now referred to as \emph{chromatic homotopy theory}.\footnote{See also the Bourbaki talks n\textsuperscript{o} 728, 1005, 1029.}
 Numerous contributors have shaped this area,
 with cornerstone by \cite{QuillenFGL}.
 A pioneering work, rooted in Morava's contributions --- including his localization theorem and the introduction of Morava K-theories ---
 was made by \textcite{MilRavWil}, where the term ``chromatic'' was introduced.
\end{itemize}
In order to introduce the reader to the techniques used in the sequel,
 we will now review the main spectral sequences that were mentioned above.
 We will later recall a general construction to obtain all of them, in Example~\ref{ex:AdamsSSq}.

\subsubsection{The Adams spectral sequence}\label{sec:Adams-topo}
 Let us fix a prime $p$, which will be implicit in all subsequent notation.
 The Adams spectral sequence at the prime $p$ is a very efficient tool to compute the $p$-adic completion
 $\hpiS *$ of the stable stems which, according to Serre's fundamental theorem, are given by the formula:
$$
\hpiS i=\piS i \otimes_\ZZ \ZZ_p
=\begin{cases}
\ZZ_p \text{ ($p$-adic integers)} & i=0, \\
(\pi_i^S) \otimes_\ZZ \ZZ_{(p)}=(\pi_i^S)[p^\infty]=(\pi_i^S)_{p-\text{tor}} & i>0, \\
0 & i<0.
\end{cases}
$$
The Adams spectral sequence\footnote{Historically, the construction and results are all due to Adams.
 A classical reference is \cite[III.15]{AdamsBook}.
 The following review is largely based on the excellent account made by \cite{RavenGreen}.}
 is multiplicative and takes the form
\begin{equation}\label{eq:Adams}
E_2^{s,t}=\Ext^{s,t}_{A_\topo}(\FF_p,\FF_p) \Rightarrow \hpiS {t-s}\,.
\end{equation}
Let us clarify the notation.
\begin{itemize}
\item The grading follows the conventions from algebraic topology: differentials on the $E_r$-page 
 have bidegree $(r,r-1)$:
$$
d_r^{s,t}:E_r^{s,t} \rightarrow E_r^{s+r,t+r-1}.
$$
\item $A_\topo$ is the classical Steenrod algebra at the prime $p$,
 made by the cohomological stable operations in (singular) $\FF_p$-cohomology.
  This is a $\ZZ$-graded Hopf algebra over $\FF_p$, which in degree $n$ can be defined as:
$$
A_\topo^n=[\sH\FF_p,S^n \wedge \sH\FF_p]_{\SH}=[\sH\FF_p,\sH\FF_p[n]]_{\SH}.
$$
\item $\Ext^{s,t}_{A_\topo}$ denote the $s$-th extension group computed in the
 category of $\ZZ$-graded $A_\topo$-modules, and the integer $t$ refers to the 
 internal $r$-th grading.
\item The filtration on the abutment has the following form:
$$
\hpiS *=F^1 \hpiS * \supset F^2 \hpiS * \supset \hdots
$$
See \cite{BousLoc}, page 275, for an explicit formula.
Moreover, the $E_2$-page is concentrated in the following region:\footnote{These
 vanishing conditions can be improved in many ways. See \cite[Theorem 1.1]{AdamsPeriod} to begin with.}
$$
E_2^{s,t}=0 \text{ if } \begin{cases} s<0, \text{or } t<s, \\
 \text{or } 0<s<t<2s-3.
\end{cases}
$$
This implies that the above filtration is finite in each degree,
 that the spectral sequence converges (strongly) and that, for
 $r>\max\big(s,\frac 12(t-3s+2)\big)$,
$$
E_\infty^{s,t}=E_r^{s,t}.
$$
\end{itemize}

\subsubsection{Computing with the Adams spectral sequence}\label{sec:cl-computing}
Since its introduction, the above spectral sequence has been a successful tool
 for computing the stable stem, by focusing on $p$-primary parts for each prime $p$.
 There are three major challenges to overcome in this computation:
\begin{enumerate}[label=Step~\arabic*., leftmargin=*]
\item \emph{Determining the $E_2$-term.} It is customary to represent $E_2^{s,t}$
 in a diagram called an \emph{Adams chart},
 where $s$  (the \emph{Adams filtration}) is plotted on the vertical axis, and $f=t-s$ (the \emph{stem})
 on the horizontal axis.
\item \emph{Understanding enough of the differentials on each page to deduce the $E_\infty$-term.}
 An element $a$ in a given page $E_r^{s,t}$ that induces a non-trivial element in the subquotient $E_\infty^{s,t}$
 is called a \emph{permanent cycle}.
\item \emph{Solving the extension problem.} It allows one to reconstruct the whole filtered $\FF_p$-vector space
 from its associated graded (under a finite filtration).
\end{enumerate}
The goal of this section is in particular to explain how motivic homotopy theory has introduced new tools
 that allows improving calculations at each stage (especially steps 2 and 3).
 Since the main difficulty lies in the first and only even prime $p=2$, we will focus on this case.
 To give the reader a sense of the progress achieved, we will state a few general results.
 According to \cite[Th. 3.4.1]{RavenGreen},
 the first four nontrivial rows in the Adams chart are given by the $\FF_2$-algebras with generators and relations
 as follows:
\begin{itemize}
\item $E_2^{0,*}=\FF_2[0]$,
\item $E_2^{1,*}=\FF_2\langle h_i, i \geq 0\rangle$, where $\deg(h_i)=2^i$,
\item $E_2^{2,*}=\FF_2\langle h_ih_j \rangle/(h_ih_j-h_jh_i, h_ih_{i+1})$,
\item $E_2^{3,*}=\FF_2\langle h_ih_jh_k, 0 \leq i<j<k; c_i, i \geq 0\rangle/(h_ih_{i+2}^2,h_i^2h_{i+2})$,
 $\deg(c_i)=11.2^i$.
\end{itemize}
The family of elements $(h_i)_{i \geq 0}$ in $Ext^1_{A_\topo}(\FF_p,\FF_p)$ corresponds to the family of generators $(\Sq_{2^i})_{i \geq 0}$
 of the Steenrod algebra (at the prime $2$). The element $c_i$ is detected by a Massey product $c_i \in \langle h_{i+1},h_i,h_{i+2}^2 \rangle$,
 which is well-defined thanks to the relations satisfied by the family of elements $(h_i)_{i \geq 0}$.

\begin{exem}
As a basic example of resolution in Step 2 above, one knows that 
 $h_0$, $h_1$, $h_2$, $h_3$ are all permanent cycles, and detect respectively $2 \in \hpiS 0$, and the Hopf elements
 $\eta_\topo \in \hpiS 1=\ZZ/2$, $\nu \in \hpiS 3=\ZZ/8$, $\sigma \in \hpiS 7=\ZZ/16$.
 It is a celebrated theorem of \cite{AdamsHopf} that none of the others
 $h_i$ are permanent --- in fact, $d_2^{1,2^i}{h_i}=h_0h_{i-1}^2 \neq 0$ for $i>3$.
\end{exem}

\begin{rema}\label{rem:ssp->E2}
The resolution of the above Step 1 involves the use of other kinds of algebraic spectral sequences,
 which converge to the $E_2$-term to be computed.
 There are many such spectral sequences: Cartan-Serre, May, algebraic Adams-Novikov, chromatic, etc.
 These tools are also used in the computations of \textcite{IWX},
 and will briefly appear at the end of this lecture (see Theorem \ref{thm:tau-cofib}).
 We refer the reader to \cite{RavenGreen} (Chap. 3, \textsection 2 for May spectral sequences,
 Chap. 5 for the chromatic spectral sequence),
 as well as to \textcite{Miller} for interactions between these various spectral sequences.
\end{rema}

\subsubsection{The Adams-Novikov spectral sequences}\label{sec:Adams-Novikov}
One of the reasons to consider the so-called Adams-Novikov spectral sequences is that they allow
 determining certain differentials in the Adams spectral sequence.
 This is based on the fact that one can replace the ring spectrum $\sH\FF_p$ in the preceding construction
 by an arbitrary ring spectrum $\E$ (see next section).
 The resulting construction is functorial in $\E$.

Let us clarify the premises of chromatic homotopy theory.
 The theory of characteristic classes can be expressed in stable homotopy by a very simple structure:
 a (complex) orientation on a given ring spectrum $\E$ is a class $c \in \tilde \E^{2}(\CC\PP^\infty)=\E^{2}(BU)$
 in the associated reduced cohomology of the infinite projective space which restricts to $1 \in \tilde \E^{2}(\CC\PP^1)\simeq \E^0(*)$.
 The beauty of stable homotopy is that this single class determines higher Chern classes satisfying all classical
 properties, except that the first Chern class of a tensor product of two line bundles is not always given by addition,
 but in general is expressed via a well-defined (commutative) formal group law $F_\E(x,y)$,
 with coefficients in the base ring $\E_*=\E_*(*)$,
 and depending only on the oriented ring spectrum $(\E,c)$.

A fundamental observation made by \textcite{QuillenFGL} is that the complex cobordism ring spectrum $\MU$ admits a canonical orientation $c_\MU$
 such that $(\MU,c_\MU)$ is the universal oriented ring spectrum and the associated formal group
 $\big(\MU_*,F_\MU(x,y)\big)$ is the universal formal group law, defined by Lazard.\footnote{In particular,
 the ring of coefficients $\MU_*$ is isomorphic to the Lazard ring $\mathbbm L=\ZZ[x_1,x_2,\hdots]$.
 See \cite[Theorems 4.1.6 and A2.1.10]{RavenGreen}.}
 As $\FF_p$-linear singular cohomology is oriented,\footnote{Naturally, the associated formal group law is the additive one.}
 it inherits a ring map $c:\MU \rightarrow \sH\FF_p$. This yields the (first) Adams-Novikov spectral
 sequence:
\begin{equation}\label{eq:Adams-Novikov-MU}
E_{2,\MU}^{s,t}=\Ext^{s,t}_{\MU_*\MU}(\MU_*,\MU_*) \Rightarrow \piS{t-s}\,,
\end{equation}
a multiplicative spectral sequence which converges to the integral stable stem.
 In contrast to the Adams spectral sequence, we have used the graded $\ZZ$-algebra
 $\MU_*\MU$, which is the $\ZZ$-dual of the algebra of stable operations $\MU^*\MU$
 on complex cobordism. The pair $(\MU_*,\MU_*\MU)$ forms what is called
 a (graded) \emph{Hopf algebroid} over the ring $R=\ZZ$: a groupoid in the category of
 (graded) $R$-algebras.\footnote{We refer the reader to \cite[Appendix 1]{RavenGreen} for more information,
 which attributes the terminology to Haynes Miller.}
 The $E_2$-term is given by the extension groups in the category
 of graded comodules over this graded Hopf algebroid, and as before, the index $t$ refers
 to the internal grading.

One can get a more useful spectral sequence by working in the
 $p$-local stable homotopy category $\SH_{(p)}$, obtained by inverting all primes
 except $p$. Indeed, Quillen showed that the $p$-local ring spectrum $\MU_{(p)}$
 splits into a direct sum of tensor products of a ring spectrum $\BP$ called
 the Brown-Perterson spectrum. This decomposition reflects the structure
 of the category of formal group laws over $\ZZ_{(p)}$, as $\BP$ is complex oriented
 and the associated formal group law $(\BP_*,F_\BP)$ is the universal $p$-typical
 one.\footnote{This structure is based on the Cartier isomorphism 
 for $p$-local formal group laws.
 We refer the reader to \cite[Appendix 2]{RavenGreen} for a concise exposition aimed at
 applications in homotopy theory. A more systematic treatment can be found in the classical monograph of \cite{Hazew}.
 Recall in particular that $\BP_*=\ZZ_{(p)}[v_1,v_2,\hdots]$, with $\deg(v_i)=2(p^i-1)$.}
 Applying the general construction to $\BP$, one gets the (second) Adams-Novikov
 spectral sequence:
\begin{equation}\label{eq:Adams-Novikov-BP}
E_{2,\BP}^{s,t}=\Ext^{s,t}_{\BP_*\BP}(\BP_*,\BP_*) \Rightarrow \piS{t-s} \otimes_\ZZ \ZZ_{(p)}\,,
\end{equation}
with similar properties to the first one, but converging to the $p$-local stable stem.
 In fact, the above $E_2$-term is precisely given by the $p$-component of \eqref{eq:Adams-Novikov-MU},
 according to \cite[Theorem 1.4.2]{RavenGreen}.

Since the formal group law of $\sH\FF_p$ is additive, therefore $p$-typical,
 one deduces that the ring map $c$ corresponding to the orientation of $\sH\FF_p$
 induces a ring map $c':\BP \rightarrow \sH\FF_p$.
 The map $c'$ induces a morphism of multiplicative spectral sequences
 $E_{r,\BP}^{s,t} \rightarrow E_{r}^{s,t}$. One can then combine information
 between both spectral sequences to obtain computations of the stable stem.\footnote{See
 \cite[\textsection 4]{RavenNovice} for a thorough study up to degree 18.}

\begin{rema}\label{rem:stacks}
As explained in a celebrated course by \cite{HopStacks},
 the Hopf algebroid $(\MU_*,\MU_*\MU)$ is an affine presentation
 of the stack $\mathcal M_{FG}$ of formal group laws with strict isomorphisms.
 One can define a line bundle $\omega$ on it by assigning to a formal group law over
 a ring $R$ its space of invariant differential forms.
 Then the $E_2$-term of the Adams-Novikov spectral sequence\eqref{eq:Adams-Novikov-MU}
 is concentrated in even degree $t$ and can be computed as:
$$
E_{2,\MU}^{s,2t}=H^s\big(\mathcal M_{FG},\omega^{\otimes t}\big).
$$
This beautiful formula (proved in \cite[\textsection 3.2, (3.5)]{GoerssStacks})
 allows one to express chromatic homotopy theory
 as arising from the stratification of the $p$-localization of the stack
 $\mathcal M_{FG}$ induced by the height of $p$-typical formal group laws.
\end{rema}

\subsubsection{Spectral sequences in motivic and classical stable homotopy: a first link}
To provide the reader with an initial sense of the relevance
 of the motivic perspective in comparison to previous computational tools,
 we conclude this subsection with a striking comparison between spectral sequences.
 Fix an algebraically closed field $k$ of characteristic $0$.
 As recalled in Theorem~\ref{thm:LevinePi}, the motivic stable stems
 of $k$ in weight $0$ agree with the classical stable stems.
 Furthermore, as discussed in Section~\ref{num:slice},
 every motivic spectrum admits a canonical slice filtration.
 When applied to the motivic sphere spectrum over $k$,
 one deduces the \emph{slice spectral sequence for the motivic
 sphere spectrum in weight $0$}:
$$
E_{1,\mathrm{slice}}^{p,q}=[s_q(\un),\un[p+q]] \Rightarrow \pi_{-p-q,0}^k \simeq \piS{-p-q}\ .
$$
The main result of \cite{LevCSS} is the following comparison theorem.
\begin{theo}
Consider the above assumptions. Then,
 up to the following reindexing,
 the above slice spectral sequence is isomorphic to the Adams-Novikov spectral sequence:
$$
E_{r,\mathrm{slice}}^{p,q} \simeq E_{2r+1,\MU}^{3p+q,2p}.
$$
In particular, both induced filtrations on $\piS{-p-q}$ coincide.
\end{theo}
This result unveils a surprising link between the algebro-geometric content of motivic spectra
 and the basic structure of classical spectra.\footnote{Such a connection --- though not stated in this form ---
 was already anticipated by \cite{VoeOpen}, at the very end of Section~6.}

\subsection{Motivic cohomology with torsion coefficients}\label{sec:hmot-torsion}

We fix a prime $\ell$ invertible in $k$ and state in this paragraph
 the known results, all due to Voevodsky, about the motivic Eilenberg-MacLane spectrum $\HMot \FF_\ell$
 with $\FF_\ell$-coefficients (see Example \ref{ex:mot-spectra}(2), Remark \ref{rem:mot-modules}).
 To comply with \cite{DIAdams, IWX}, we let $\M_\ell$ be the bigraded $\FF_\ell$-algebra such that
 $\M_\ell^{n,i}=\HMot^{n,i}(k,\FF_\ell)$.

We also write $\Het\mu_\ell$ for the motivic ring spectrum which represents \'etale cohomology
 with coefficients in the $\ZZ$-graded $\FF_\ell$-torsion sheaf $\mu_\ell^{\otimes,*}$.
 According to the rigidity theorem of \cite[Corollary 6.4.2]{SV-BK}, it can be identified with the \'etale sheafification
 $a_\et(\HMot \FF_\ell)$ of $\HMot \FF_\ell$.\footnote{The cohomology represented by $a_\et(\HMot \FF_\ell)$ is usually called 
 the \emph{Lichtenbaum motivic cohomology} with $\FF_\ell$-coefficients.} This yields a canonical morphism of motivic
 ring spectra:
\begin{equation}\label{eq:HMotHet}
\HMot \FF_\ell \xrightarrow{\gamma_\et} \Het\mu_\ell.
\end{equation}
 Given that result, it is notable that the Beilinson-Lichtenbaum conjecture,
 proved by \cite{VoeBK},\footnote{See also the review of \cite{RiouBK}.}
 is equivalent to the following formulation stated purely in terms of the homotopy $t$-structure
 on $\iSH(k)$, as defined in Theorem \ref{thm:htp-t-struct}.
 In the case where $k$ is not perfect of characteristic $p$,
 the formulation still makes sense and remains valid by appealing to Remark~\ref{rem:htp-non-perfect}.
\begin{theo}\label{thm:BL}
Consider the above notation. Then the map \eqref{eq:HMotHet}
 induces an isomorphism of motivic ring spectra over $k$:
$$
\HMot \FF_\ell \rightarrow \tau_{\geq 0}\big(\Het\mu_\ell\big),
$$
using the (homological) truncation functor associated with the homotopy $t$-structure on $\iSH(k)$
 of Theorem \ref{thm:htp-t-struct}.
\end{theo}
We leave it to the reader to verify the equivalence of this statement
 with the classical formulation of the Beilinson-Lichtenbaum conjecture given in 
 \cite[\textsection~3, Conjecture~(Be4)]{SV-BK}.\footnote{Hint: use the properties
 of the homotopy $t$-structure as stated in Section~\ref{sec:stab-htp-t}.}

\subsubsection{Periodicity in Galois cohomology}
We will deduce from the preceding theorem a description of the
 stable motivic homotopy sheaves of $\HMot \FF_\ell$ (Definition~\ref{df-htp-mod}).
 According to the preceding result, let us consider \'etale unramified $\mu_q$-cohomology,
 a classical invariant in arithmetic, organized as a homotopy module (Definition \ref{df-htp-mod}).
 For any integer $i \in \ZZ$, we let $\mathcal H_\et^i(\mu_\ell)_*$ be the homotopy module over $k$
 whose $n$-th graded part is given by the sheaf
$$
\mathcal H_\et^i(\mu_\ell)_n=\mathcal H_\et^{i+n}(-,\mu_\ell^{\otimes n}),
$$
obtained by considering the Zariski, or equivalently Nisnevich, sheafification of the corresponding
 \'etale cohomology presheaf on $\Sm_k$. It is worth noting that
 the homotopy module $\mathcal H_\et^i(\mu_\ell)_*$ is equivalent to the Rost cycle module defined
 by the following Galois cohomology functor on function fields $E/k$:
$$
E \mapsto H^{i+*}(G_E,\mu_q^*).
$$
This follows from the equivalence of categories mentioned in \ref{ex:piSA}(2),
 the fact that $\eta$ acts trivially on $\Het \mu_\ell$
 and the identification of \'etale cohomology of fields with Galois cohomology.
 
By construction, we also have the relation $\piAS i(\Het\mu_\ell)=\mathcal H_\et^{-i}(\mu_\ell)_*$.
 The ring structure of $\Het\mu_\ell$, as well as the classical cup product in Galois cohomology,
 corresponds to the fact that $\mathcal H_\et^*(\mu_\ell)_*$ is
 a commutative monoid object in the category of homotopy modules. We will simply say that it is an algebra.
 It is striking that the stable motivic homotopy sheaves $\piAS i(\Het\mu_\ell)$ satisfy
 a periodicity property analogous to the $v_i$-periodicity observed in the stable stem.

Let us first recall that, for any integer $q$, the action of the absolute Galois group $G_k$ on the Galois module
 $\mu_\ell^{\otimes,q}$ is given by the $q$-th power $\chi^q$ of the cyclotomic character
 $\chi:G_k \rightarrow \FF_\ell^\times$.
 This implies that there exists a canonical isomorphism $\mu_\ell^{\otimes,\ell-1} \simeq \FF_\ell$
  of Galois modules (or \'etale sheaves), which can be immediately translated into a periodicity
 isomorphism in the algebra $H^*(G_E,\mu_q^*)$ for any extension field $E/k$.
 To get a motivic formulation that will be shortly stated, we apply the above theorem to deduce a canonical isomorphism:
$$
\M_\ell^{0,0} \simeq  H^0(G_k,\FF_\ell) \simeq H^0(G_k,\mu_\ell^{\otimes,\ell-1}) \simeq \M_\ell^{0,\ell-1}
$$
and we denote by $\tau' \in \M_\ell^{0,\ell-1}$ the image of $1$ under this isomorphism.

We can improve this periodicity if $k$ admits an $\ell$-th root of unity.
 Then the choice of $\zeta_\ell \in \mu_\ell(k)$ induces an isomorphism $\mu_\ell \simeq \FF_\ell$,
 and therefore a periodicity in $H^*(G_E,\mu_q^*)$ as above.
 Moreover, let us recall that one gets the following exact sequence in motivic cohomology:\footnote{One uses
 the exact sequence of abelian groups $0 \rightarrow \ZZ\xrightarrow{\ell} \ZZ \rightarrow \FF_\ell$,
 viewed for example in motivic complexes, and the known computations of motivic cohomology with twists $0$ and $1$.}
$$
0 \rightarrow \HMot^{0,1}(k,\FF_\ell) \rightarrow \HMot^{1,1}(k,\ZZ)=k^\times
 \xrightarrow{\ \ell\ } k^\times=\HMot^{1,1}(k,\ZZ) \rightarrow \HMot^{1,1}(k,\FF_\ell) \rightarrow 0.
$$
It induces a canonical isomorphism $\M_\ell^{0,1}(k) \simeq \mu_\ell(k)$.
 Then we denote generically by $\tau \in \M_\ell^{0,1}(k)$ the element which corresponds to $\zeta_\ell$
 via this isomorphism.\footnote{Observe that according to these definitions, $\tau'=\tau^{\ell-1}$.}

This study, paired with the previous theorem, gives the following corollary
 formulated in terms of homotopy modules.
\begin{coro}
Consider the above notation.
Then there exists canonical isomorphisms of commutative monoids in the monoidal abelian category of homotopy modules:
\begin{align*}
\piAS *(\HMot \FF_\ell)& \simeq \oplus_{i=0}^{l-2} \mathcal H_\et^{-i}(\mu_\ell)_*[\tau'] \\
\piAS *(\Het \mu_\ell)& \simeq \oplus_{i=0}^{l-2} \mathcal H_\et^{-i}(\mu_\ell)_*[\tau',\tau^{\prime -1}]
\end{align*}
where the left-hand side is seen as a polynomial algebra (resp. Laurent polynomial algebra) in the variable $\tau'$,
 also seen as an element of the respective left-hand sides.

If we assume that $k$ contains an $\ell$-th root of unity and let $\tau$ be given as above,
 then one deduces canonical isomorphisms of algebras in homotopy modules:
\begin{align*}
\piAS *(\HMot \FF_\ell)& \simeq \uKM_*/\ell[\tau] \\
\piAS *(\Het \mu_\ell)& \simeq \uKM_*/\ell[\tau,\tau^{-1}]
\end{align*}
with the same description of the right-hand sides as previously, but with respect to $\tau$.
\end{coro}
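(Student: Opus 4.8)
The plan is to reduce the statement to Theorem~\ref{thm:BL}, to the already‑recorded identity $\piAS i(\Het\mu_\ell)=\mathcal H_\et^{-i}(\mu_\ell)_*$, and to the elementary fact that the cyclotomic character takes values in a group of order dividing $\ell-1$, so that $\mu_\ell^{\otimes(\ell-1)}\simeq\FF_\ell$ as étale sheaves on $\Sm_k$. Throughout I write $\piAS *(\E)=\bigoplus_{m\in\ZZ}\piAS m(\E)$ for the total (bigraded) homotopy sheaf of a motivic spectrum $\E$; when $\E$ is an $E_\infty$‑ring spectrum this is a graded commutative monoid in $\HM(k)$, by the compatibility of the homotopy $t$‑structure with the monoidal structure (Theorem~\ref{thm:htp-t-struct}). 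First I would compute these homotopy sheaves: by Theorem~\ref{thm:BL} the morphism \eqref{eq:HMotHet} exhibits $\HMot\FF_\ell$ as the connective cover $\tau_{\geq 0}(\Het\mu_\ell)$ for the homotopy $t$‑structure, and since the unit map $\tau_{\geq 0}\E\to\E$ is an isomorphism on $\piAS m$ for $m\geq 0$ and annihilates $\piAS m$ for $m<0$, one gets $\piAS m(\HMot\FF_\ell)\simeq\mathcal H_\et^{-m}(\mu_\ell)_*$ for $m\geq 0$ and $\piAS m(\HMot\FF_\ell)=0$ for $m<0$, whereas $\piAS m(\Het\mu_\ell)=\mathcal H_\et^{-m}(\mu_\ell)_*$ for every $m\in\ZZ$ with no truncation.

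Next I would feed in the periodicity. The isomorphism $\mu_\ell^{\otimes(\ell-1)}\simeq\FF_\ell$ produces the class $\tau'\in\M_\ell^{0,\ell-1}$ of the statement; inside $\piAS *(\HMot\FF_\ell)$ it sits in homotopy degree $\ell-1$ and Tate weight $\ell-1$, and cup product with $\tau'$ is a morphism of (Tate‑twisted) homotopy modules $\piAS m(\HMot\FF_\ell)\to\piAS{m+\ell-1}(\HMot\FF_\ell)$. Under the identifications above this is exactly the periodicity isomorphism $\mathcal H_\et^{-m}(\mu_\ell)_*\xrightarrow{\sim}\mathcal H_\et^{-(m-\ell+1)}(\mu_\ell)_*$ coming from $\mu_\ell^{\otimes(n+\ell-1)}\simeq\mu_\ell^{\otimes n}$, since $\tau'$ is by construction the periodicity generator of étale cohomology. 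Writing $m=j(\ell-1)+i$ with $j\geq 0$ and $0\leq i\leq\ell-2$, multiplication by $(\tau')^{j}$ then identifies $\mathcal H_\et^{-i}(\mu_\ell)_*$ with $\piAS m(\HMot\FF_\ell)$; as this is an isomorphism in each internal degree, these identifications assemble into the free $\FF_\ell[\tau']$‑algebra $\bigoplus_{i=0}^{\ell-2}\mathcal H_\et^{-i}(\mu_\ell)_*[\tau']$, the multiplicativity being checked on the generators $\mathcal H_\et^{-i}(\mu_\ell)_*$ and $\tau'$. For $\Het\mu_\ell$ the class $\tau'$ is moreover invertible, its inverse being the periodicity class of weight $-(\ell-1)$, and this upgrades the polynomial algebra to the Laurent algebra $\bigoplus_{i=0}^{\ell-2}\mathcal H_\et^{-i}(\mu_\ell)_*[\tau',\tau'^{-1}]$. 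This gives the first pair of isomorphisms.

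For the last pair, a choice of $\zeta_\ell\in\mu_\ell(k)$ trivializes $\mu_\ell\simeq\FF_\ell$ over $k$, hence over every $E/k$, and — through the exact sequence recalled before the statement — produces the class $\tau\in\M_\ell^{0,1}$ with $\tau^{\ell-1}=\tau'$; it lies in homotopy degree $1$ and weight $1$, and cup product with $\tau$ now realizes the refined periodicity $\piAS m(\HMot\FF_\ell)\xrightarrow{\sim}\piAS{m+1}(\HMot\FF_\ell)$ up to Tate twist. Combining this with the norm residue isomorphism $\uKM_n/\ell\simeq H^n_\et(-,\mu_\ell^{\otimes n})$ — the heart‑level content of Theorem~\ref{thm:BL} — and with the identification $\piAS 0(\HMot\FF_\ell)=\uKM_*/\ell$, which follows from the exact triangle $\HMot\ZZ\xrightarrow{\ell}\HMot\ZZ\to\HMot\FF_\ell$ and the connectivity of $\HMot\ZZ$ together with $\piAS 0(\HMot\ZZ)=\uKM_*$, every $\piAS m(\HMot\FF_\ell)$ is seen to be a Tate twist of $\uKM_*/\ell$ reached from degree $0$ by a power of $\tau$. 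Reassembling, $\piAS *(\HMot\FF_\ell)$ becomes $\uKM_*/\ell[\tau]$ and its $\tau$‑localization $\piAS *(\Het\mu_\ell)$ becomes $\uKM_*/\ell[\tau,\tau^{-1}]$, as algebras in $\HM(k)$.

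Granting Theorem~\ref{thm:BL} (that is, Bloch--Kato), the remaining work is essentially bookkeeping, and this is where I expect the main difficulty: one must keep careful track of the two gradings (homotopy degree versus Tate weight), verify that cup product with $\tau'$, resp.\ $\tau$, genuinely implements the Galois‑cohomology periodicity — so that the modules in play are free over $\FF_\ell[\tau']$, resp.\ $\FF_\ell[\tau]$, and so that inverting $\tau'$ corresponds precisely to passing from $\HMot\FF_\ell$ to $\Het\mu_\ell$ — and confirm that all the comparison isomorphisms can be chosen multiplicatively, not merely additively.
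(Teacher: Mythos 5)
Your proposal follows the paper's intended argument precisely: Theorem~\ref{thm:BL} identifies $\HMot\FF_\ell$ with the connective cover $\tau_{\geq 0}(\Het\mu_\ell)$ for the homotopy $t$-structure, and the periodicity classes $\tau'$ and (when $k$ contains an $\ell$-th root of unity) $\tau$ then assemble the homotopy modules into the stated polynomial and Laurent-polynomial algebras, with the $\Het\mu_\ell$ case being periodicity alone. One small sign slip: under $\piAS j(\Het\mu_\ell)=\mathcal H_\et^{-j}(\mu_\ell)_*$, cup product with $\tau'$ carries $\mathcal H_\et^{-m}(\mu_\ell)_*$ to $\mathcal H_\et^{-(m+\ell-1)}(\mu_\ell)_*$, not to $\mathcal H_\et^{-(m-\ell+1)}(\mu_\ell)_*$; this does not affect the rest of the argument, which correctly tracks everything in terms of $\piAS m$.
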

In both cases, the first isomorphism is induced by \eqref{eq:HMotHet} and follows from the periodicity properties studied earlier together
 with the preceding theorem; the second isomorphism is a reformulation of these periodicity properties.

\begin{rema}
\begin{enumerate}
\item Since we are mixing cohomological bidegrees and homological bidegrees for the homotopy $t$-structure,
 we make explicit the effect of multiplication by $\tau'$ (respectively $\tau$) in terms of the $\GG$-twist operation
 on homotopy modules (see Definition \ref{df-htp-mod}):
\begin{align*}
&\piAS i(\M_\ell).\tau'=\piAS {i+l-1}(\M_\ell)\tw{l-1} \\
\text{resp. } & \piAS i(\M_\ell).\tau=\piAS {i+1}(\M_\ell)\tw{1}.
\end{align*}
\item When $k$ does not contain an $\ell$-th root of unity, the stated periodicity is
 the best possible as for all $q \geq 0$, one gets:
$$
\piAS q(\HMot \FF_\ell)_q(k)=\M_\ell^{0,q} \simeq H^0(G_k,\mu_\ell^{\otimes,q})=
\begin{cases}
\FF_\ell & \text{if } q=0 \text{ mod } {\ell-1}, \\
0 & \text{otherwise.}
\end{cases}
$$
\end{enumerate}
\end{rema}

One deduces from the previous corollary the following result.
\begin{coro}
Under the assumptions of the Theorem \ref{thm:BL}, the morphism of motivic ring spectra over $k$, induced by \eqref{eq:HMotHet},
$$
\HMot\FF_\ell[\tau^{\prime -1}] \rightarrow \Het \mu_\ell
$$
is an isomorphism, where the left-hand side is obtained by internally inverting $\tau'$ (see Section \ref{sec:intern-comp&loc}).
\end{coro}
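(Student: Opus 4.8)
The plan is to deduce the statement from the preceding periodicity corollary by a computation of stable motivic homotopy sheaves, using that the internal localization is a filtered colimit and that the homotopy $t$-structure of Theorem~\ref{thm:htp-t-struct} detects equivalences. First I would pin down the map in question. Recall that $\tau' \in \M_\ell^{0,\ell-1} = [\un,\HMot\FF_\ell(\ell-1)]_{\SH(k)}$, and that the internal localization $\HMot\FF_\ell \to \HMot\FF_\ell[\tau'^{-1}]$ of Section~\ref{sec:intern-comp&loc} is the initial $\HMot\FF_\ell$-algebra in which $\tau'$ becomes invertible, realized concretely as the filtered colimit of $\HMot\FF_\ell \xrightarrow{\tau'} \HMot\FF_\ell(\ell-1) \xrightarrow{\tau'} \HMot\FF_\ell(2(\ell-1)) \to \cdots$. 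By the preceding corollary, $\piAS *(\Het\mu_\ell)$ is a \emph{Laurent} polynomial algebra in $\tau'$ over $\oplus_{i=0}^{\ell-2}\mathcal H_\et^{-i}(\mu_\ell)_*$ (resp. over $\uKM_*/\ell$ after adjoining an $\ell$-th root of unity); in particular $\gamma_\et(\tau')$ acts invertibly on $\Het\mu_\ell$, i.e. multiplication by it is an auto-equivalence. Hence $\gamma_\et$ of \eqref{eq:HMotHet} factors uniquely through a morphism of motivic ring spectra $\bar\gamma_\et\colon\HMot\FF_\ell[\tau'^{-1}] \to \Het\mu_\ell$; this is the map to be shown to be an isomorphism.

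Next I would compute $\piAS n$ of both sides. Since $\iSH(k)$ is compactly generated by the objects $\Sus X_+(r)[n]$ with $X \in \Sm_k$, each functor $\E \mapsto \piAS n(\E)_r$ — the Nisnevich sheafification of $V \mapsto [\Sus V_+,\E(r)[r-n]]$, whose corepresenting object is compact — commutes with filtered colimits. Therefore $\piAS *(\HMot\FF_\ell[\tau'^{-1}]) = \piAS *(\HMot\FF_\ell)[\tau'^{-1}]$, the localization of the graded homotopy module at the central element $\tau'$; by the preceding corollary and the description of multiplication by $\tau'$ as a twist-and-shift isomorphism recorded in the remark above, this is canonically $\oplus_{i=0}^{\ell-2}\mathcal H_\et^{-i}(\mu_\ell)_*[\tau',\tau'^{-1}]$, which the same corollary identifies with $\piAS *(\Het\mu_\ell)$. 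Since both isomorphisms there are induced by \eqref{eq:HMotHet}, the map induced by $\bar\gamma_\et$ on $\piAS *$ is the identity under these identifications, hence an isomorphism in every degree and twist. One then concludes that $\bar\gamma_\et$ is an equivalence, because a morphism of motivic spectra inducing isomorphisms on all stable motivic homotopy sheaves is an isomorphism (non-degeneracy of the homotopy $t$-structure on $\SH(k)$), and an equivalence of underlying spectra that is a ring map is an equivalence of ring spectra. If $k$ is imperfect of characteristic $p$ (necessarily $p\neq\ell$), one first reduces to the perfect case via Remark~\ref{rem:htp-non-perfect} after inverting $p$, which is harmless since $\HMot\FF_\ell$ and $\Het\mu_\ell$ are already $p$-local.

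The only substantial input is the preceding corollary, hence ultimately Theorem~\ref{thm:BL} (Voevodsky's Beilinson–Lichtenbaum theorem); granting it, everything above is formal. The two points that still deserve care are the interchange of $\piAS n$ with the colimit defining $\HMot\FF_\ell[\tau'^{-1}]$ — which relies on the compactness of the corepresenting objects in $\iSH(k)$ — and the non-degeneracy of the homotopy $t$-structure invoked at the last step, equivalently the conservativity of the family $(\piAS n)_{n\in\ZZ}$ on $\SH(k)$; neither is hard, but both should be flagged, especially in the imperfect case.
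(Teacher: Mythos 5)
Your proof is correct and is exactly the deduction the paper intends — the paper merely says ``one deduces from the previous corollary'' without spelling out the details, and your argument supplies them: identify the map, compute $\piAS_*$ of both sides using that the corepresenting objects are compact (so $\piAS_n$ commutes with the filtered colimit defining the internal localization), invoke the preceding corollary, and conclude by conservativity of $(\piAS_n)_n$, with the imperfect case handled via Remark~\ref{rem:htp-non-perfect}. You are right to flag the two delicate points; both are standard consequences of Morel's stable $\AA^1$-connectivity theorem and the compact generation of $\iSH(k)$ by bounded-below objects.
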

 This statement has a long history, starting with the pioneering work of \cite{ThoKEt} where an analogous
 result for algebraic K-theory was established. The above motivic version was in fact proved independently of the validity of the Beilinson-Lichtenbaum conjecture
 (which is stronger) by \cite[Theorem 6.2]{LevBott}.

Note that this corollary consists of two distinct assertions:
 first, an \'etale descent theorem for the $\tau'$-inverted theory; second,
 a computation of étale motivic cohomology in terms of classical étale cohomology.
 The latter is, as mentioned before Theorem \ref{thm:BL}, a rigidity statement (that also has a long history),
 and was established for motivic cohomology by \cite[Corollary 6.4.2]{SV-BK}.
 It has since been generalized in various contexts, the latest statement in motivic homotopy theory being \cite{BachRig}.
 The descent statement was likewise extended in motivic homotopy theory by \cite{ELSO} (for arbitrary $\MGL$-modules).

The following corollary plays a central role in \cite{IWX}.
\begin{coro}
Let $k$ be an algebraically closed field of characteristic prime to $\ell$.
 Then the $\FF_\ell$-linear motivic cohomology ring of coefficients over $k$ is the polynomial algebra:
$$
\M_\ell=\FF_\ell[\tau]
$$
where $\tau$ is an element of \emph{homological} bidegree $(0,-1)$ defined by the choice of an $\ell$-th root of unity in $k$.
\end{coro}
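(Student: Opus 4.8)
The plan is to deduce this corollary directly from the previous one, which computes $\piAS *(\HMot\FF_\ell)$ as a homotopy module. First I would note that, since $k$ is algebraically closed, it contains all roots of unity, so in particular an $\ell$-th root of unity $\zeta_\ell$; this provides the element $\tau\in\M_\ell^{0,1}(k)$, of homological bidegree $(0,-1)$, appearing in the second pair of isomorphisms of the preceding corollary. That corollary gives the isomorphism of algebras in homotopy modules
$$
\piAS *(\HMot\FF_\ell)\simeq \uKM_*/\ell\,[\tau].
$$
So the task reduces to evaluating both sides at the field $k$ in the appropriate (homological) bidegrees and identifying $\uKM_*/\ell$ evaluated at $k$.

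**Key steps.** I would proceed as follows. \emph{Step 1:} recall from Example~\ref{ex:piSA}(2) and Paragraph~\ref{sec:unramifKM-W} that $\piAS 0(\HMot\ZZ)=\uKM_*$, hence $\piAS 0(\HMot\FF_\ell)=\uKM_*/\ell$ (one reduces mod $\ell$ using the short exact sequence $0\to\ZZ\xrightarrow{\ell}\ZZ\to\FF_\ell\to0$ and the fact that the relevant motivic cohomology groups are already $\ell$-torsion-free in the degrees that matter; alternatively this is stated directly in the corollary above). \emph{Step 2:} evaluate $\uKM_n$ at the function-field level. Since $k$ is algebraically closed, $k^\times$ is divisible, so $\KM_n(k)/\ell=0$ for $n\geq 1$, while $\KM_0(k)/\ell=\FF_\ell$. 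Thus as a graded abelian group $\big(\uKM_*/\ell\big)(k)=\FF_\ell$ concentrated in Milnor-weight $0$. \emph{Step 3:} combine with the polynomial structure: the corollary's isomorphism says $\piAS q(\HMot\FF_\ell)_r(k)$ is spanned by monomials $\tau^m$ with the bookkeeping $\piAS i(\M_\ell).\tau=\piAS{i+1}(\M_\ell)\tw{1}$ from the Remark, i.e.\ each power of $\tau$ shifts homological bidegree by $(0,-1)$ (equivalently, in cohomological bidegree, $\tau$ sits in $(0,1)$). Reading off, $\M_\ell=\bigoplus_m \FF_\ell\cdot\tau^m$, which is exactly the polynomial algebra $\FF_\ell[\tau]$ with $\tau$ in homological bidegree $(0,-1)$. \emph{Step 4:} check the ring structure matches: the monoid structure on $\piAS *(\HMot\FF_\ell)$ as an algebra in homotopy modules restricts, after evaluation at $k$, to the cup product on $\M_\ell=\HMot^{**}(k,\FF_\ell)$, and under the isomorphism of the previous corollary this is precisely multiplication of polynomials in $\tau$ over $\uKM_*/\ell$, which at $k$ is just $\FF_\ell[\tau]$.

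**Main obstacle.** The genuinely substantive input — the Beilinson--Lichtenbaum theorem (Theorem~\ref{thm:BL}) and the periodicity corollary that rests on it — is already granted, so the remaining work is bookkeeping. The one point deserving care is the bidegree convention: one must be consistent about the passage between cohomological bidegrees $\HMot^{n,i}$ and the homological indexing $\piAS n(-)_r$ of homotopy modules, and verify that $\tau\in\M_\ell^{0,1}$ corresponds to homological bidegree $(0,-1)$ as claimed, using the explicit dictionary $\piAS i(\M_\ell).\tau=\piAS{i+1}(\M_\ell)\tw 1$ recorded in the Remark above. A secondary point is justifying the vanishing $\KM_n(k)/\ell=0$ for $n\ge 1$ over an algebraically closed field, which follows from divisibility of $k^\times$ and the surjectivity of the product map $k^\times\otimes\cdots\otimes k^\times\twoheadrightarrow\KM_n(k)$; for $\ell$ coprime to the characteristic this is classical and requires no case analysis beyond noting $\mu_\ell(k)\cong\FF_\ell$, which is also where the chosen $\tau$ lives.
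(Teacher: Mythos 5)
Your proof is correct and takes the same approach as the paper: one reads the previous corollary's isomorphism $\piAS *(\HMot\FF_\ell)\simeq\uKM_*/\ell[\tau]$ at the point $\spec(k)$ and notes that $\KM_*(k)/\ell=\FF_\ell$ concentrated in degree $0$ when $k$ is separably closed, since $k^\times$ is $\ell$-divisible. The excursion in your Step~1 through a universal-coefficient argument is unnecessary --- as you yourself note, the mod-$\ell$ identification is already supplied by the previous corollary --- but the remaining bidegree bookkeeping is a faithful unpacking of what the paper leaves implicit in its one-sentence proof.
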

Indeed, as $k$ is separably closed, one obviously obtains an isomorphism of graded algebras $\KM_*(k)/\ell=\FF_\ell$, concentrated in degree $0$.

\subsubsection{The motivic Steenrod algebra}
The last ingredient that we will need is
 the motivic Steenrod algebra modulo $2$ over the field $\CC$.
 More generally, the mod~$\ell$ motivic Steenrod algebra over a field $k$ of characteristic prime to $\ell$
 is defined as the $\FF_\ell$\nobreakdash-algebra of stable cohomology operations on $\FF_\ell$-linear motivic cohomology:
 $$\mathcal A^{**}(k,\FF_\ell):=(\HMot\FF_\ell)^{**}(\HMot\FF_\ell),$$
 using the topological notation, keeping in mind the bidegree grading
 specific to the motivic context.

Instead of recalling all the details, we will give some key references to the reader.
 These operations were first computed by \cite{VoeOper} over a perfect
 field.\footnote{This assumption can easily be removed as the $\FF_\ell$-linear motivic cohomology
 spectrum is invariant under purely inseparable extensions.}
 Nice accounts, with corrections and complements, were given by \cite{RiouOperS} and \cite{HKO_Oper}.

As in the topological situation, explained in the previous section, 
 it is easier to work with the dual motivic Steenrod algebra modulo $\ell$,
 defined as follows:\footnote{While this bigraded $\FF_\ell$-algebra
 is $\FF_\ell$-dual to its cohomological counterpart, its algebraic structure involves the Cartan
 relations which are simpler than the Adem relations which describe the multiplication of $\mathcal A_\ell^k$.
 This is perfectly analogous to the situation in topology.}
$$
\mathcal A_{p,q}(k,\FF_\ell)=(\HMot\FF_\ell)_{p,q}(\HMot\FF_\ell)=[\un(q)[p],\HMot\FF_\ell \otimes \HMot \FF_\ell]_{\SH(k)}.
$$
For future reference, we will simply write $A=(\HMot\FF_2)_{**}(\HMot\FF_2)$ taken over the field $\CC$.
 We now recall the explicit presentation of this bigraded commutative $\FF_2$-algebra which also carries
 with a bigraded $\M_2$-algebra structure:
\begin{equation}
A=\M_2[\tau_0,\tau_1,\hdots,\xi_1,\xi_2,\hdots]/(\tau_i^2-\tau\xi_{i+1})
\end{equation}
where the generators are the motivic analogue of the Milnor basis:
\begin{itemize}
\item $\tau_i$ has bidegree $(2^{i+1}-1,2^i-1)$ and is dual to $\Sq^{2^i-1}$,
\item $\xi_i$ has bidegree $(2^{i+1}-2,2^i-1)$ and is dual to $\Sq^{2^i}\Sq^{2^{i-1}}\hdots\Sq^1$.
\end{itemize}
We refer the reader to \cite[Theorem 5.6]{HKO_Oper} for a precise and comprehensive description
 of all the algebraic structures of the Hopf algebroid $(\M_2,A)$.

\subsubsection{Complex realization}
We conclude by studying the complex case $k=\CC$, using the complex realization
 $\rho_\CC:\iSH(\CC) \rightarrow \iSH$ of Section \ref{sec:stable-real}.
 Fixing an $\ell$-th root $\zeta_\ell$ of unity in $\CC$, we get the class $\tau \in \M_\ell^{0,1}$ as above.

 Recall that we have in Example \ref{sec:stable-real} defined the $\FF_\ell$-linear Betti motivic spectrum:
 $\HB\FF_\ell:=\rho_{\CC*}(\sH\FF_\ell)$.
 Note that, apart from the fact that it has coefficients in a field of positive characteristic,
 it satisfies all the axioms of a Mixed Weil cohomology theory, as defined in \cite{CD2}.
 In particular, many of the results of \emph{loc. cit.} apply \emph{mutatis mutandis} to the resulting motivic ring spectrum $\HB \FF_\ell$,
 except that one needs to modify the discussion using the K-theory spectrum $\KGL$ at the end of \textsection 2.3.\footnote{Indeed,
 Theorem 2.3.23 of \emph{loc. cit.} fails in the $\FF_\ell$-linear context because of the existence of nontrivial Steenrod operations.
 This is why we use a different argument to get the map $\gamma_\mathrm B$ below.}
 More importantly, $\HB\FF_\ell$ is $(0,1)$-periodic and in fact, one can define a canonical \emph{Tate-twisting} $\FF_\ell$-vector space
 attached to this cohomology. Using the notation of \emph{loc. cit.}, 2.1.3, 2.1.5, one puts:
$$
\FF_\ell(1):=\tilde \HH_\mathrm B^{1}(\GG,\FF_\ell)
$$
using the reduced Betti cohomology of the sphere $\GG$. According to the axioms of a mixed Weil theory,
 this is a $1$-dimensional $\FF_\ell$-vector space so that we can define its $i$-th tensor power $\FF_\ell(i)$ for any integer $i$.
 Moreover, for any smooth complex scheme $X$, any pair of integers $(n,i) \in \ZZ^2$, one gets a canonical isomorphism:
$$
\HB^{n,i}(X,\FF_\ell) \simeq \HB^n(X,\FF_\ell) \otimes_{\FF_\ell} \FF_\ell(i).
$$
See \emph{loc. cit.} \textsection 2.1.7.
In fact, in the case of Betti cohomology, Tate-twists admit a canonical trivialization:
$$
\FF_\ell(1)=\tilde \HH^{1}(\GG(\CC),\FF_\ell) \simeq \HH^0(S^0,\FF_\ell)=\FF_\ell.
$$
We will still denote by $c \in \HB^{0,1}(\CC,\FF_\ell)$ the class defined by this isomorphism,\footnote{According
 to \emph{loc. cit.} 2.2.6 and 2.2.8, it induces an orientation of the motivic ring spectrum $\HB\FF_\ell$,
 which is unique as seen from the preceding isomorphism;} so that the $(0,1)$-periodicity of Betti cohomology is
 induced by multiplication by $c$.

According to the description of the motivic Eilenberg-MacLane spectrum using symmetric powers (see \cite[\textsection 6.1]{VoeICM}),\footnote{this works
 well since we are over a field of characteristic $0$,}
 one deduces a canonical isomorphism: $\rho_{\CC}(\HMot \FF_\ell) \simeq \sH\FF_\ell$. This allows one to apply the adjunction
 $(\rho_\CC,\rho_{\CC*})$ to get the following morphism of ring spectra:\footnote{There are of course several other ways to build this map.}
$$
\gamma_{\mathrm B}:\HMot\FF_\ell \rightarrow \rho_{\CC*}\rho_\CC(\HMot\FF_\ell) \simeq \rho_{\CC*}(\sH\FF_\ell)=\HB\FF_\ell.
$$
The functoriality properties of the induced map on the associated cohomologies and the construction of $\tau$ imply that
 $\gamma_{\mathrm B*}(\tau)=c$.

Finally, the comparison of $\FF_\ell$-linear \'etale cohomology with Betti cohomology induces a commutative diagram
 of motivic ring spectra:
\begin{equation}\label{eq:compat-real}
\begin{split}
\xymatrix@R=10pt@C=22pt{
\HMot\FF_\ell\ar^-{\gamma_\et}[r]\ar@{=}[d] & \Het\mu_\ell\ar^\tau_\sim[r] & \Het \FF_\ell\ar^\sim[d] \\
\HMot\FF_\ell\ar^{\gamma_\mathrm B}[rr] && \HB\FF_\ell
}
\end{split}
\end{equation}
where the right-hand side map is the isomorphism, obtained by the analytification map. The commutativity of the diagram
 can be obtained by using the analytification functor of \cite[\textsection 2]{AyoubAn}.
 Therefore, one can restate all the previous results in terms of complex realization as follows.
\begin{coro}\label{cor:F_l-compare-HM-HB}
Over the field $k=\CC$, the map $\gamma_\mathrm B$ induces isomorphisms of motivic ring spectra:
\begin{align*}
&\HMot \FF_\ell \simeq \tau_{\geq 0} \HB\FF_\ell, \\
&\HMot \FF_\ell[\tau^{-1}] \simeq \HB\FF_\ell.
\end{align*}
\end{coro}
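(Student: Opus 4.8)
The plan is to obtain both isomorphisms as formal consequences of Theorem~\ref{thm:BL}, of the corollary asserting that $\gamma_\et$ induces an isomorphism $\HMot\FF_\ell[(\tau')^{-1}]\xrightarrow{\sim}\Het\mu_\ell$, and of the commutative diagram~\eqref{eq:compat-real}. First I would spell out what~\eqref{eq:compat-real} provides over $k=\CC$: its right-hand vertical arrow $\Het\FF_\ell\xrightarrow{\sim}\HB\FF_\ell$ is the comparison isomorphism of motivic ring spectra produced by the analytification functor of \cite[\textsection 2]{AyoubAn}, while the horizontal map $\Het\mu_\ell\to\Het\FF_\ell$, multiplication by $\tau$, is an isomorphism because the chosen $\ell$-th root of unity trivialises $\mu_\ell$ (equivalently, $\Het\mu_\ell$ is already $\tau$-periodic). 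Composing these, $\gamma_{\mathrm B}$ is identified, as a morphism of motivic ring spectra, with $\HMot\FF_\ell\xrightarrow{\gamma_\et}\Het\mu_\ell\xrightarrow{\sim}\HB\FF_\ell$; in particular $\gamma_{\mathrm B*}(\tau)=c$, and, as recalled just before the statement, multiplication by $c$ realizes the $(0,1)$-periodicity of $\HB\FF_\ell$, so $c$ acts invertibly.

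For the first isomorphism I would argue as follows. By Theorem~\ref{thm:BL}, $\gamma_\et$ exhibits $\HMot\FF_\ell$ as the connective cover $\tau_{\geq 0}(\Het\mu_\ell)$; in particular $\HMot\FF_\ell$ is connective for the homotopy $t$-structure, so $\gamma_{\mathrm B}$ factors canonically through $\tau_{\geq 0}\HB\FF_\ell$. Applying the functor $\tau_{\geq 0}$ to the isomorphism $\HB\FF_\ell\simeq\Het\mu_\ell$ and precomposing with the isomorphism $\HMot\FF_\ell\xrightarrow{\sim}\tau_{\geq 0}(\Het\mu_\ell)$ of Theorem~\ref{thm:BL}, the resulting map $\HMot\FF_\ell\to\tau_{\geq 0}\HB\FF_\ell$ is an isomorphism, and it is a map of ring spectra since $\gamma_{\mathrm B}$ is.

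For the second isomorphism I would first observe that internally inverting $\tau$ and internally inverting $\tau'=\tau^{\ell-1}$ produce the same motivic spectrum: by $\tau'=\tau^{\ell-1}$ and the ring structure, the tower computing $\HMot\FF_\ell[(\tau')^{-1}]$ is the subsequence of the $\tau$-tower indexed by multiples of $\ell-1$, hence cofinal, with the same homotopy colimit. Thus $\HMot\FF_\ell[\tau^{-1}]=\HMot\FF_\ell[(\tau')^{-1}]$, and the quoted corollary says $\gamma_\et$ induces an isomorphism of this spectrum with $\Het\mu_\ell$. Composing with $\Het\mu_\ell\xrightarrow{\sim}\HB\FF_\ell$, and using that $c=\gamma_{\mathrm B*}(\tau)$ already acts invertibly so that $\HB\FF_\ell[c^{-1}]=\HB\FF_\ell$ and $\gamma_{\mathrm B}$ factors as $\HMot\FF_\ell\to\HMot\FF_\ell[\tau^{-1}]\to\HB\FF_\ell$, one concludes that this middle map is an isomorphism of ring spectra.

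There is no genuine obstacle here: the statement is a bookkeeping corollary of inputs that have already been assembled. The only point to be attentive about is that the two renormalisations of $\gamma_{\mathrm B}$ on the source side — taking $\tau_{\geq 0}$, respectively inverting $\tau$ — correspond under~\eqref{eq:compat-real} to computing $\tau_{\geq 0}(\Het\mu_\ell)$ (which is Theorem~\ref{thm:BL}), respectively to using that $\Het\mu_\ell$ is manifestly $\tau$-periodic; this reduces to tracking the single identity $\gamma_{\mathrm B*}(\tau)=c$ through the truncation functor and through the colimit defining $(-)[\tau^{-1}]$, which is immediate from functoriality.
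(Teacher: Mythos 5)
Your proof is correct and takes essentially the same approach the paper intends: the paper simply states that one ``can restate all the previous results in terms of complex realization,'' and your argument fills in exactly that transport, using diagram~\eqref{eq:compat-real} to identify $\gamma_{\mathrm B}$ with the composite of $\gamma_\et$ and the two isomorphisms, then pushing Theorem~\ref{thm:BL} and the preceding corollary on $\HMot\FF_\ell[\tau'^{-1}]$ through these, with the cofinality observation that inverting $\tau$ and inverting $\tau'=\tau^{\ell-1}$ yield the same localization being the only extra bookkeeping step needed.
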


The following result was first proved by \cite{DIAdams} (see in particular Corollary 2.9).
\begin{coro}\label{cor:mot-cl-Hopf-algebroid}
Consider the notation introduced above.
Then the complex realization functor $\rho_\CC$ induces an isomorphism of graded Hopf
 algebroid (see Section \ref{sec:Adams-Novikov})
 over the graded ring
 $\M_\ell[\tau^{-1}] \simeq \FF_\ell[\tau,\tau^{-1}]$:
$$
(\M_\ell,\mathcal A_{**}(\CC,\FF_\ell))[\tau^{-1}] \simeq (\FF_\ell,A_\topo) \otimes_{\FF_\ell} \M_\ell[\tau^{-1}].
$$
The left-hand side is obtained by inversion of the element $\tau$, while the right-hand side is obtained
 by scalar extensions.
\end{coro}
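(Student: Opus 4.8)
The plan is to compute the $\tau$-inverted motivic dual Steenrod algebra directly from the presentations already recorded, and then to check that complex realization identifies it, together with all its Hopf-algebroid structure, with the classical one. The input is Corollary~\ref{cor:F_l-compare-HM-HB}, which gives $\HMot\FF_\ell[\tau^{-1}]\simeq\HB\FF_\ell$, together with the computation $\M_\ell=\FF_\ell[\tau]$ over $\CC$. Since $\rho=[-1]$ vanishes over $\CC$, the right unit of the Hopf algebroid $(\M_\ell,\mathcal A_{**}(\CC,\FF_\ell))$ fixes $\tau$; hence inverting $\tau$ from either smash factor of $\HMot\FF_\ell\otimes\HMot\FF_\ell$ yields the same object, namely $\HB\FF_\ell\otimes\HB\FF_\ell$, and, inversion of $\tau$ being a filtered colimit, one gets $\pi_{**}(\HB\FF_\ell\otimes\HB\FF_\ell)=\mathcal A_{**}(\CC,\FF_\ell)[\tau^{-1}]$. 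So the problem splits into an algebraic computation and an identification of the resulting isomorphism with the one coming from $\rho_\CC$.

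First I would carry out the algebra. At $\ell=2$ the presentation over $\CC$ is $A=\M_2[\tau_0,\tau_1,\dots,\xi_1,\xi_2,\dots]/(\tau_i^2-\tau\xi_{i+1})$, with no $\rho$-terms; inverting $\tau$ turns each relation into $\xi_{i+1}=\tau^{-1}\tau_i^2$, so the $\xi_i$ may be eliminated and $A[\tau^{-1}]\cong\FF_2[\tau^{\pm 1}][\tau_0,\tau_1,\dots]$. A bidegree count shows that $\tau^{2^i-1}\tau_i$ lies in weight $0$ and topological degree $2^{i+1}-1$, the bidegree of the classical Milnor generator $\xi_{i+1}$, whence an isomorphism of bigraded $\FF_2[\tau^{\pm 1}]$-algebras $A[\tau^{-1}]\cong A_\topo\otimes_{\FF_2}\FF_2[\tau^{\pm 1}]$. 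For odd $\ell$ the argument is shorter: Voevodsky's presentation is already $\M_\ell[\xi_1,\dots]\otimes\Lambda(\tau_0,\dots)$, and after rescaling the generators by powers of $\tau$ the map becomes the base change $\M_\ell\to\FF_\ell[\tau^{\pm 1}]$.

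Next I would exhibit this isomorphism as induced by realization, which forces it to respect the Hopf-algebroid structure. The relevant comparison is $\gamma_{\mathrm B}\colon\HMot\FF_\ell\to\HB\FF_\ell$, a morphism of $E_\infty$-ring spectra with $\gamma_{\mathrm B*}(\tau)=c$ (diagram~\eqref{eq:compat-real}); passing to dual operations it induces the localization map $\mathcal A_{**}(\CC,\FF_\ell)\to\mathcal A_{**}(\CC,\FF_\ell)[\tau^{-1}]$, a morphism of Hopf algebroids, while $\rho_\CC$ identifies each fixed-weight piece of the target with $A_\topo$ (using $\rho_\CC\HB\FF_\ell\simeq\sH\FF_\ell$ and $\rho_\CC(\tau)=1$). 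On generators, dualizing the known identity $\rho_\CC(\Sq^{2^i})=\Sq^{2^i}$ shows that the motivic Milnor basis is carried to the classical one, up to the predicted powers of $\tau$ and the usual conjugation; and since over $\CC$ the formulas of \cite{HKO_Oper} for comultiplication, counit and conjugation are Milnor's formulas with $\rho$ set to $0$, the substitution $\xi_{i+1}=\tau^{-1}\tau_i^2$ together with the characteristic-$\ell$ Frobenius (so that, e.g., $\psi(\tau^{-1}\tau_n^2)=\tau^{-1}\psi(\tau_n)^2$ recovers Milnor's coproduct for $\xi_{n+1}$) shows that all structure maps go over to the classical ones. A bijective morphism of Hopf algebroids being an isomorphism, this finishes the proof.

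The hard part is exactly this last bookkeeping: one must check with care that, after inverting $\tau$ and deleting the $\xi_i$, the motivic comultiplication and---especially---the conjugation reproduce Milnor's formulas on the nose, with the right indexing and no stray $\tau$-powers, and that $\rho_\CC$ hits the intended generators rather than their conjugates. Everything here rests on the vanishing $\rho=0$ over $\CC$, which is what collapses the extra motivic terms; once that is used, the residual calculation is routine characteristic-$\ell$ Hopf-algebra manipulation, but it is the only genuinely content-bearing step not already packaged in the results quoted above.
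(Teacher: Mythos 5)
Your proposal is correct and takes essentially the same route as the paper, which likewise rests on Corollary~\ref{cor:F_l-compare-HM-HB} (the ring-spectrum isomorphism $\HMot\FF_\ell[\tau^{-1}]\simeq\HB\FF_\ell$) together with the monoidality of $\rho_\CC$, and otherwise defers to \cite{DIAdams} for the detailed verification. Your observation that $\rho=0$ over $\CC$ forces the right unit of the Hopf algebroid to fix $\tau$, so that $\tau$-inversion is unambiguous on both smash factors, is exactly the point that makes the argument go through, and the explicit weight-$0$ bidegree count on the Milnor generators is a useful check that the paper leaves implicit.
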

In fact, \emph{op. cit.} is slightly less precise, and considers only the case $\ell=2$,
 which is the most interesting one from a computational perspective.

\begin{rema}
This corollary is the first manifestation of the principle that 
 the weights in (torsion) motivic homotopy theory add a supplementary dimension to the classical invariants. 
 As an example, one should be aware that the cohomological operations on Betti cohomology modulo $\ell$
 are obtained from the Steenrod algebra by scalar extension along the Laurent polynomial $\FF_2$-algebra
 $\FF_2[c,c^{-1}]$. In particular, besides the Steenrod operations,
 we have the operation corresponding to multiplication by the scalar $c \in \HB^{0,1}(\CC,\FF_\ell)$,
 which expresses the $(0,1)$-periodicity of Betti cohomology.
\end{rema}

\subsection{The motivic Adams spectral sequence}

\subsubsection{Bousfield nilpotent completions and Adams towers}\label{sec:nilpotent}
We will now recall a general construction due to \cite[\textsection 5]{BousLoc},
 from which we adopt the notation. The aim is to define generalized Adams spectral sequences,
 compute their $E_2$-term as suitable extension groups, compute their abutment in terms of appropriate resolutions,
 and to elucidate their convergence properties.
 Nowadays, the construction can be made in an arbitrary stable presentable (symmetric) monoidal $\infty$-category $\iC$,
 which encompasses both the cases of $\SH$ and $\SH(k)$.\footnote{The reader can also consult \cite[Part 1]{MNNnilp},
 \cite[\textsection 2]{BOnb}, or \cite{Manto}.}

To be consistent with the previous notation, we let $[X,Y]_{\iC}$ be the morphisms
 between two objects of $\iC$ computed in its homotopy category.
 Given an integer $s \in \ZZ$, we define: $\pi_s(X,Y)=[X[s],Y]$.

We fix an object $\E$ in $\iC$ with a (not necessarily commutative)
 monoid structure\footnote{It is useful to avoid the commutativity assumptions.
 Indeed, it is easier to construct an $A_\infty$-structure rather than an $E_\infty$-structure
 on ring spectra, in the classical stable homotopy category. For example,
 it is a famous result of \cite{Lawson} that there cannot exist an $E_\infty$-ring structure on $\BP$.}
  $u:\un \rightarrow \E$ and $\mu:\E \otimes \E \rightarrow \E$
 in the associated homotopy category $\Ho\iC$. We slightly abuse notation and denote by
 $u:\un \rightarrow \E$ an arbitrary representative, as an element of $\iC_1$,
 of the homotopy class $u$.
 We then consider the homotopy fiber $\bar \E$ of the map $u$ and
 put $\bar \E^s=\bar \E^{\otimes s}$.
 One deduces homotopy exact sequences:
$$
\xymatrix@C=50pt@R=4pt{
\bar \E\ar^-{f_0=\epsilon}[r] & \un\ar^-u[r] & \E, \\
\bar \E^{s+1}\ar^-{f_{s+1}=\epsilon \otimes \bar \E^{s}}[r] & \bar \E^s\ar[r] & \E \otimes \bar \E^s,
}
$$
the second one being obtained from the first one by left-tensoring with $\bar \E^s$.
 We deduce a decreasing \emph{tower} of objects over $\un$:
$$
\hdots \rightarrow \bar \E^{s+1} \xrightarrow{f_{s+1}} \bar \E^{s}
 \rightarrow \hdots \rightarrow \bar \E \xrightarrow{f_0} \un
$$
or, in other words, an $\infty$-functor $\bar \E^\bullet:\ZZ_{\geq 0}^{\op} \rightarrow (\iC/\un)$,
 with values in the indicated comma $\infty$-category (see \cite[Définition 10.1]{Cisinski}
 for comma $\infty$-categories).
 Let us also define by $\bar \E_s$ the homotopy cofiber which fits into the homotopy exact sequence:
$$
\bar \E^{s+1} \rightarrow \un \rightarrow \bar \E_s.
$$
 Applying the octahedron axiom (see \cite[Theorem 1.1.2.14]{LurieHA}), to the preceding
 homotopy exact sequences, one deduces an octahedral diagram, in planar form:
\begin{equation}\label{eq:Bous-Adams}
\begin{split}
\xymatrix@=10pt{
& \bar \E^{s+1}\ar^{f_{s+1}}[rr]\ar[ld] && \bar \E^s\ar[rd]\ar[ld] & \\
\un\ar[rd]\ar@{}|/2pt/{(*)}[r]
 & & \E \otimes \bar \E^s\ar[ld]\ar@{-->}[lu]\ar@{}|/3pt/{(*)}[u]\ar@{}|/3pt/{(*)}[d]
 & & \un\ar@{}|/2pt/{(*)}[l]\ar[ld] \\
& \bar \E_{s}\ar_{g_{s+1}}[rr]\ar@{-->}[uu] && \bar \E_{s-1}\ar@{-->}[lu]\ar@{-->}[uu] &
}
\end{split}
\end{equation}
where a triangle containing $(*)$ is a homotopy exact sequence,
 a dashed arrow is a boundary map of such a homotopy exact sequence (therefore the target is implicitly suspended once),
and all other triangles are commutative
 (including the ones obtained by identifying the two objects $\un$ to a single vertex).
 We have therefore obtained a tower $\bar \E_\bullet:\ZZ_{\geq 0}^{\op} \rightarrow \un/\iC$ of objects
 under $\un$.\footnote{According to \cite{HopStacks}, one should call it the \emph{normalized $A$-Adams resolution of $\un$}.
 By tensoring with an arbitrary object $\mathbf X$, we obtain the \emph{normalized $\E$-Adams resolution of $\mathbf X$}.}

We can then apply any homological functor\footnote{i.e., $\mathcal A$ is an abelian category
 and $H$ sends a homotopy exact sequence to a long exact sequence;}
 $H:\Ho\iC \rightarrow \mathcal A$ to the diagram \eqref{eq:Bous-Adams},
 yielding two exact couples that fit into a Rees system in the terminology of \cite{EH-ec}.
 According to Theorem 7.10 of \emph{loc. cit.}, both exact couples give rise to the same spectral sequence.
 We apply this construction to the homological functor $[\un,-]_{\iC}$.\footnote{The general
 construction applies the functor $[\sX,(-) \otimes \sY]_{\iC}$ for arbitrary
 objects $\sX$, $\sY$.}
\begin{defi}
Consider the above assumptions.
 One defines the nilpotent $\E$-completion of an arbitrary object $\sX$ as the following homotopy limit:
$$
\hat{\sX}^\E:=\lim_{n\geq 0}\big(\sX \otimes \bar \E_n\big).
$$
The spectral sequence associated to the exact couples obtained from \eqref{eq:Bous-Adams}
 by replacing the entries $\un$ with $\hat \un^\E$ and then applying the homological functor $[\un,-]_{\iC}$:
$$
E_{1,\E}^{s,t}=\pi_{t-s}(\un,\E \otimes \bar \E^s)
 \Rightarrow \pi_{t-s}(\un,\hat \un^\E)
$$
is called the \emph{$\E$-Adams spectral sequence}.\footnote{Following the topological conventions,
 the differentials $d_1^{**}$ are of bidegree $(1,0)$.}
\end{defi}

\begin{exem}\label{ex:AdamsSSq}
\begin{enumerate}
\item Applying the above construction with $\iC=\iSH$,
 and with the associative algebra
 $\E=\sH\FF_p, \MU, \BP$ respectively, one obtains the three spectral sequences
 which appear in the previous section.
\item We will apply the above construction in the motivic case $\iC=\iSH(k)$.
 Note that in the motivic case, there is an additional grading coming from the Tate twist.
 In particular, instead of applying the above construction to a given ring spectrum $\E$,
 we will apply it to the graded object $\E(*)$. This implies that we will be working with
 the same object as above, but equipped with an additional grading,
 which will automatically be compatible with products.

The main example for us will be $\E=\HMot\FF_2$ over the field $k=\CC$.
 However, it is possible to consider the two cases $\E=\sH\FF_\ell, \MGL$.
 In addition, the analog of the Brown-Peterson spectrum exists in motivic homotopy,
 a ring spectrum denoted by $\BPGL$.
 See \cite{HKO_Adams}, or more generally \cite{NSO}.
\end{enumerate}
\end{exem}

\begin{rema}
The convergence of the $\E$-Adams spectral sequence is a delicate matter.
 If $E_1^{s,t}$ vanishes for $s>t$ (which will follow in all our applications),
 then one gets a weak form of convergence as explained in \cite{BousLoc},
 beginning of \textsection 6, and called \emph{conditional convergence}.
 According to \emph{loc. cit.} Proposition 6.3,
 the strong convergence\footnote{By ``strong convergence'', we mean that the filtration on the abutment
 is exhaustive, separated (Hausdorff) and complete;} is equivalent to the vanishing:
$\underset r {\lim}^1 \big(E_r^{s,t}\big)=0$.

In the applications, the spectral sequence will in fact
 be concentrated in a bounded region from the second page onward.
 This implies that for any $(s,t)$,
 the sequence $(E_r^{s,t})_{r\geq 1}$ stabilizes
 and that the filtration on the abutment is finite.
 (See the argument in Section \ref{sec:Adams-topo}.)
\end{rema}

\subsubsection{The cobar construction}\label{sec:cobar}
Let us consider the notation of the above definition,
 and assume in addition $\E$ is an $A_\infty$-object in $\iC$,
 or equivalently an associated algebra object as defined by \cite[\textsection 4.1.1, Definition 4.1.1.6]{LurieHA}.
 
The next step is to compute the $E_2$-term of the preceding spectral sequence.
 As in topology, we associate to $\E$ a homology theory on $\iC$ which to an object $X$ of
 $\iC$ associates the graded $\E_*$-module $\E_*X=\pi_*(\un,\E \otimes X)$,
 where $\E_*=\pi_*(\un,\E)$ is the associated \emph{ring of coefficients}.
 It follows as in Section \ref{sec:Adams-Novikov}, that the pair
 $(\E_*,\E_*\E)$ is a \emph{Hopf algebroid}.
 
To go further, we again follow the classical approach from topology
 highlighted by \cite[\textsection 5]{HopStacks}.
 Using the $A_\infty$-structure on $\E$,
 one defines a cosimplicial diagram $\CB^\bullet(\E):\Delta \rightarrow \iC$:
$$
\xymatrix@=30pt{
\E\ar@<3pt>[r]\ar@<-3pt>[r] & \E \otimes \E\ar[l]\ar@<6pt>[r]\ar[r]\ar@<-6pt>[r] & \E \otimes \E \otimes \E\ar@<3pt>[l]\ar@<-3pt>[l]\ar@{..>}[r] & \hdots
}
$$
such that $\CB^n(\E)=\E^{\otimes n+1}$; see \cite[Construction 2.7]{MNNnilp}.
 This is called the \emph{cobar construction} on $\E$.\footnote{Note from \emph{loc. cit.} 
 that it admits augmentation by $\un$.}
 According to a classical procedure (originally formalized by \cite{BKloc}),
 one can extract a tower
 $\ZZ_{\geq 0}^{op} \rightarrow \iC$ by taking partial limits:\footnote{\label{fn:Tot} In fact,
 the elegant Theorem 2.8 of \emph{op. cit.}, attributed to Lurie,
 asserts that the general $\infty$-functor
$$
\Tot_\bullet:\Fun(\Delta,\iC) \rightarrow \Fun(\ZZ_{\geq 0}^{\op},\iC)
$$
is an equivalence of stable $\infty$-categories.
 This is a stable $\infty$-categorical version of the Dold-Kan equivalence.}
$$
\Tot_n\CB^\bullet(\E)=\varprojlim_{i \leq n} \CB^i(\E).
$$
Let us state explicitly the following fundamental comparison result,
 proved in this form in \emph{op. cit.} Proposition 2.14.
\begin{prop}\label{prop:cobar&standard}
Under the above assumptions, there exists a canonical equivalence of towers:
 $\bar \E_\bullet \simeq \Tot_\bullet\CB^\bullet(\E)$.\footnote{We have neglected the
 natural augmentation with source $\un$.}

The tower $\Tot_\bullet\CB^\bullet(\E)$ will be called the \emph{tower $\E$-Adams
 resolution} attached to $\E$.\footnote{Here, it is understood that this is a resolution of $\un$.}
\end{prop}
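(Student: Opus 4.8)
The plan is to deduce the claimed equivalence of towers from the stable Dold--Kan equivalence recalled in footnote~\ref{fn:Tot}, namely that $\Tot_\bullet\colon\Fun(\Delta,\iC)\xrightarrow{\ \sim\ }\Fun(\ZZ_{\geq0}^{\op},\iC)$ is an equivalence of stable $\infty$-categories (\emph{loc.\ cit.}\ Theorem~2.8). Under this equivalence a tower of objects over $\un$ corresponds to an augmented cosimplicial object of $\iC$, and the $n$-th fibre of the tower corresponds, after the standard normalization shift, to the $n$-th normalized cochain object of the cosimplicial object. So the strategy is: first, exhibit the cosimplicial object of $\iC$ whose totalization tower is the Adams tower $\bar\E_\bullet$ of~\eqref{eq:Bous-Adams}; second, identify that cosimplicial object with the cobar construction $\CB^\bullet(\E)$; third, apply $\Tot_\bullet$.

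First I would compute the associated graded of the tower $\bar\E_\bullet$. From the two defining cofibre sequences $\bar\E^{s+1}\to\bar\E^s\to\E\otimes\bar\E^s$ and $\bar\E^{s+1}\to\un\to\bar\E_s$, together with $\bar\E=\operatorname{fib}(u)$ (so that $\operatorname{cofib}(\bar\E\to\un)=\E$), the octahedron axiom yields cofibre sequences $\E\otimes\bar\E^{\otimes n}\to\bar\E_n\to\bar\E_{n-1}$; hence the $n$-th graded piece of $\bar\E_\bullet$ is $\E\otimes\bar\E^{\otimes n}$, and the boundary maps appearing in~\eqref{eq:Bous-Adams} furnish coface and codegeneracy data between these pieces at the level of the homotopy category. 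Promoting this data to a genuine functor $\Delta\to\iC$ is exactly where the hypothesis that $\E$ is an $A_\infty$-object intervenes: the unit $u\colon\un\to\E$ and the coherently associative multiplications on $\E$ assemble, as in \cite[Construction~2.7]{MNNnilp}, into the cobar cosimplicial object $\CB^\bullet(\E)$ with $\CB^n(\E)=\E^{\otimes n+1}$. A direct computation then shows that the normalized cochain objects of $\CB^\bullet(\E)$ --- the iterated fibres of the codegeneracies $\E^{\otimes n+1}\to\E^{\otimes n}$, which are split by unit maps --- recover, up to the Dold--Kan normalization shift, exactly the graded pieces $\E\otimes\bar\E^{\otimes n}$ above. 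Thus the two cosimplicial objects agree level by level after normalization.

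It remains to promote this level-wise agreement to an equivalence in $\Fun(\Delta,\iC)$ (equivalently, one could argue by induction on $n$, using the octahedron axiom \cite[Theorem~1.1.2.14]{LurieHA} to carry a chosen equivalence $\bar\E_{n-1}\simeq\Tot_{n-1}\CB^\bullet(\E)$ of objects over $\un$ to the next stage, the extension being pinned down by the coincidence of the two attaching maps built from $u$ and from the multiplication $\mu$). \textbf{The hard part is the coherence bookkeeping}: what is genuinely needed is an equivalence of \emph{functors} $\ZZ_{\geq0}^{\op}\to\iC$, not merely a compatible family of equivalences of objects, so the comparison has to be carried out as a map of cosimplicial objects, which in turn forces one to track how the $A_\infty$-multiplication on $\E$ interacts with the cofibre sequences defining the powers $\bar\E^{\otimes s}$, and to reconcile the various shift and indexing conventions on the two sides. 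Dispatching this --- essentially the content of \cite[Proposition~2.14]{MNNnilp} --- yields the asserted equivalence $\bar\E_\bullet\simeq\Tot_\bullet\CB^\bullet(\E)$, and hence identifies $\Tot_\bullet\CB^\bullet(\E)$ as the tower $\E$-Adams resolution attached to $\E$.
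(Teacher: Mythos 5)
Your proposal is correct and takes essentially the same route as the paper: the paper's entire proof is the citation ``proved in this form in \emph{op.~cit.}~[i.e.\ \cite{MNNnilp}] Proposition 2.14'' appearing immediately before the statement, and you likewise defer the coherence argument to that reference. The additional sketch you supply (matching associated graded pieces of the two towers via the stable Dold--Kan equivalence of footnote~\ref{fn:Tot}, then invoking \cite[Proposition~2.14]{MNNnilp} to handle the functoriality in $\ZZ_{\geq 0}^{\op}$) is a reasonable outline of what that reference does, and you correctly flag the shift/indexing reconciliation and the promotion from level-wise equivalences to an equivalence of towers as the genuine content being delegated.
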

One deduces that for any integer $t \in \ZZ$,
 the complexes $E_1^{*,t}$ from the $\E$-Adams spectral sequence take the following form
$$
\hdots 0 \rightarrow \E_t \rightarrow \E_{t-1}\E \rightarrow \E_{t-2}(\E^{\otimes 2}) \rightarrow \hdots
 \rightarrow \E_{t-s}(\E^{\otimes s}) \rightarrow \hdots
$$
where $\E_*$ sits in (cohomological) degree $s=0$,
 and the differentials are obtained as the alternating sum of the maps coming from the cobar-resolution of $\E$.
 Using the exterior pairing $\E*(X) \otimes_{\E_*} \E*(Y) \rightarrow \E_*(X \otimes Y)$,
 one deduces the following corollary.
\begin{coro}\label{cor:compute-E2}
Consider the above notation. Assume that $\E$ satisfies the following \emph{weak K\"unneth property}:
$$
\forall s>1, \text{  the exterior pairing induces an isomorphism }
(\E_*\E)^{\otimes_{\E_*}s} \rightarrow \E_*(\E^{\otimes s}).
$$
Then one can compute the $E_2$-term of the $\E$-Adams spectral sequence as
$$
E_2^{s,t}=\Ext^{s,t}_{\E_*\E}(\E_*,\E_*)
$$
the $\Ext$-group being computed in the category of comodules over the Hopf algebroid
 $(\E_*,\E_*\E)$.
\end{coro}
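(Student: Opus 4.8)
The plan is to feed the structural input of Proposition~\ref{prop:cobar&standard} into the classical homological algebra of Hopf algebroids. By that proposition the $\E$-Adams tower is identified with $\Tot_\bullet\CB^\bullet(\E)$, so the $E_1$-page of the $\E$-Adams spectral sequence is the cohomology of the complex obtained by applying the homological functor $[\un,-]_{\iC}$ (weight by weight, in the motivic case) to the cosimplicial object $\CB^\bullet(\E)$ and passing to the associated normalized cochain complex. As recorded in the remark following Proposition~\ref{prop:cobar&standard}, this puts $E_1^{*,t}$ in the shape
$$
\cdots 0 \to \E_t \to \E_{t-1}\E \to \E_{t-2}(\E^{\otimes 2}) \to \cdots \to \E_{t-s}(\E^{\otimes s}) \to \cdots
$$
with differential the alternating sum of the maps induced by the coface operators of $\CB^\bullet(\E)$.

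The first real step is to rewrite this complex purely algebraically. The weak Künneth hypothesis gives, for every $s$, an isomorphism $\E_*(\E^{\otimes s}) \simeq (\E_*\E)^{\otimes_{\E_*} s}$, compatible with the exterior pairing; under these isomorphisms the coface maps of $\CB^\bullet(\E)$ induce on homology exactly the unit $\E_* \to \E_*\E$ in the two extreme slots and the comultiplication $\psi\colon \E_*\E \to \E_*\E \otimes_{\E_*}\E_*\E$ in the interior slots — this is precisely the point at which the $A_\infty$-(co)multiplicative structure of $\E$ is converted into the Hopf-algebroid structure of $(\E_*,\E_*\E)$. Hence $(E_1^{*,t}, d_1)$ is identified, in each internal degree $t$, with the algebraic cobar complex $\mathrm{C}^\bullet(\E_*, \E_*\E, \E_*)$. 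One then invokes the standard fact that for a flat Hopf algebroid $(A,\Gamma)$ the cobar complex is the canonical resolution of $A$ by extended — hence relatively injective — $\Gamma$-comodules, so its cohomology is $\Ext^{*,*}_\Gamma(A,A)$; see \cite[Appendix~1]{RavenGreen}. Applying this to $(A,\Gamma) = (\E_*,\E_*\E)$ yields $E_2^{s,t} = \Ext^{s,t}_{\E_*\E}(\E_*,\E_*)$, as claimed.

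The main obstacle is the middle step: matching the coface maps of the $\infty$-categorical cobar construction $\CB^\bullet(\E)$ — whose combinatorics come from the $A_\infty$-algebra structure and the bar formalism of \cite{MNNnilp} — with the unit and comultiplication of the Hopf algebroid $(\E_*,\E_*\E)$, and checking that the alternating-sum differential agrees, signs included, with the algebraic cobar differential. A subsidiary point to address is flatness of $\E_*\E$ over $\E_*$, which is what makes the comodule category abelian and the extended comodules relatively injective; in the applications of interest this either follows from the weak Künneth property or should be imposed alongside it. Once these compatibilities are in place, the identification of $E_2$ is purely formal.
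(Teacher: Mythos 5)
Your argument takes exactly the route the paper implies: after Proposition~\ref{prop:cobar&standard} and the weak K\"unneth hypothesis, the $E_1$-complex becomes the algebraic cobar complex of the Hopf algebroid $(\E_*,\E_*\E)$, whose cohomology is $\Ext^{*,*}_{\E_*\E}(\E_*,\E_*)$ by the standard homological algebra of Hopf algebroids. Your observation that flatness of $\E_*\E$ over $\E_*$ is an implicit hypothesis --- needed for the comodule category to be abelian and for extended comodules to be relatively injective, hence for the $\Ext$ to be computed by the cobar complex --- is a legitimate caveat not formally captured by the weak K\"unneth assumption; the paper defers to it in the sentence immediately following the corollary, citing \cite[Proposition 5.7]{HopStacks}.
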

In our examples from topology, the weak form of the K\"unneth property
 follows from the assumption that $\E_*\E$ is flat over $\E_*$;
 see \cite[Proposition 5.7]{HopStacks}.

\subsubsection{Localization and completion}\label{sec:intern-comp&loc}
We finally recall the general tools to compute the abutment of
 the previous Adams spectral sequence, still following \cite{BousLoc}.
 According to the general philosophy developed by \cite{BKloc},
 this involves the $\infty$-categorical analogue of completion or localization in classical algebra.
 To recall these basic definitions, we will now assume that $\iC$
 is a presentable monoidal stable $\infty$-category.

We consider a $1$-morphism $f:L \rightarrow \un$,\footnote{Usually, it will be a homotopy class
 but the constructions will not depend on a choice of representative;} which will play the role
 of elements in the (motivic) stable stem, and $X$ be object of $\iC$.
\begin{enumerate}
\item For an integer $n>0$,
 one defines the $n$-th (homotopy) quotient $X/f^n$ of $X$ by $f$ as the homotopy cofiber
 of the map $X \otimes L^{\otimes n} \rightarrow X$.
\item One defines the completion of $X$ at $f$ as the following homotopy limit:\footnote{This is where
 we use the assumption that $\iC$ is presentable;}
$$
\hat X_f:=\holim_n \big(X/f^n\big).
$$
\item Assuming that $L$ is $\otimes$-invertible, one defines the localization of $X$ at $f$
 as the following homotopy limit:
$$
X[f^{-1}]:=\hocolim\big(X \xrightarrow f X \otimes L^{-1} \xrightarrow f X \otimes L^{-2} \rightarrow \hdots\big)
$$
where we have denoted by $L^{-n}$ the $n$-th tensor power of the $\otimes$-inverse of $L$,
 and simply by $f$ the morphism induced by $f$ after the obvious tensor product.
\end{enumerate}
These constructions can be extended to an $n$-tuple $I=(f_i:L_i \rightarrow \un)$ in an obvious way (left to the reader).
 We will (loosely) say that $I$ is an ideal of $\un$, and use the notation $\sX/I$, $\hat{\sX}_I$, $\sX[I^{-1}]$.
 
In order to state the next result,
 we will need to assume the existence of a $t$-structure $t$ on $\iC$.\footnote{Or, equivalently, its homotopy category $\Ho\iC$
 following standard usage in the literature.} We still use homological conventions, write $A \geq 0$ for non-negative objects
 and let $\upi_0$ be the associated (co)homological functor.
 We will assume that $\iC$ is \emph{left-complete}.\footnote{i.e., for any object $X$ in $\iC$, the canonical map
 $X \longrightarrow \underset{n \to -\infty}{\mathrm{colim}} \big(\tau_{\geq n}(X)\big)$ is an isomorphism.}
 We will also assume that $t$ is \emph{compatible with the tensor structure}
 in the usual sense (see footnote \ref{fn:t-tensor-compatible}, page \pageref{fn:t-tensor-compatible}).

We can now state the following pretty generalization of \cite[Theorems~6.5,6.6]{BousLoc},
 due to \cite{BOnb} and \cite{Manto}.
\begin{theo}
Consider the above notation.
 We let $\E$ be an $E_\infty$-algebra in $\iC$ such that $\E \geq 0$
 and $\sX$ be a connective object with respect to $t$.\footnote{In other words, homologically bounded below.}
 Let $I=(f_i:L_i \rightarrow \un)$ be an ideal of $\un$ as above.
\begin{enumerate}
\item Assume that for all $i$, $L_i$ is non-negative, strongly dualizable with a non-negative strong dual,
 and $\upi_0(\E) \simeq \upi_0\big(\un/I\big)$. Then: $\hat{\sX}_\E=\hat{\sX}_I$.
\item Assume that for all $i$, the tensor product with $L_i$ is a $t$-exact equivalence of $\infty$-categories
 and $\upi_0(\E) \simeq \upi_0\big(\un[I^{-1}]\big)$. Then: $\hat{\mathbf X}_\E=\sX[I^{-1}]$.
\end{enumerate}
\end{theo}
We refer the reader to \cite[Theorem 2.1+2.2]{BOnb},\footnote{The second point can easily be deduced from \emph{loc. cit.}
 Theorem 2.1.} or to \cite{Manto}.

\begin{exem}\label{ex:concrete-complete}
\begin{enumerate}
\item Let $\sX$ be a connective spectrum with respect to the
 canonical $t$-structure on $\iSH$. Then, for a prime $\ell \in k^\times$, one has:
$$
\hat{\sX}_{\sH\FF_\ell}=\hat{\sX}_\ell, \hat{\sX}_{\HH\ZZ}=\hat{\sX}_{\MU}=\sX, \hat{\sX}_\BP={\sX[\ell^{-1}]}.
$$
\item Let $\sX$ be a connective motivic spectrum with respect to the
 homotopy $t$-structure on $\iSH(k)$. Then, by applying the above theorem and Example \ref{ex:piSA}, one deduces:
$$
\hat{\sX}_{\MGL}=\hat{\sX}_{\HMot\ZZ}=\hat{\sX}_{\eta}, \hat{\sX}_{\HMot\FF_\ell}=\hat{\sX}_{(\ell,\eta)}, \hat{\sX}_{\BPGL}={\sX[\ell^{-1}]}.
$$
\item Consider $\sX$ as in the previous point.
 Assume in addition that $(-1)$ is a sum of squares in $k$ and that $\ell \neq 2$.
 Then: $\hat{\sX}_{(\ell,\eta)}=\hat{\sX}_{\ell}$.
 See \cite[Lemma 3.3.1]{Manto}.
\item Assume that $(-1)$ is a sum of squares in $k$ and that $k$ has finite $2$-cohomological dimension.
 Then the $\HMot\FF_2$-nilpotent completion of the motivic sphere spectrum agrees with
 its $2$-completion:
$$
\hat{\un}_{\HMot\FF_2}=\hat{\un}_2.
$$
This fact was first proved, over algebraically closed fields, by \cite{HKO_Adams}.
 The more general case is proved in \cite[Lemma 3.3.2]{Manto} (based on \emph{loc. cit.}).
\end{enumerate}
\end{exem}

\begin{rema}\label{rem:Bousloc}
We do not enter into the details, but it is important to note that another corollary of the above
 theorem is that, under the stated hypothesis, the nilpotent $\E$-localization also
 coincides with the (left Bousfield) localization with respect to $\E$, which consists in inverting
 maps inducing isomorphisms in $\E_*$-homology
 --- beware that in motivic homotopy, one has to consider the bigraded homology theory $\E_{**}$.
 The resulting localization functor $L_E$, following the general procedure described in Section \ref{sec:abstract-loc},
 has good properties --- for example, it respects $E_\infty$-ring spectra.
\end{rema}

Given all these preparations, we can now state the existence and form of the motivic Adams spectral
 sequence, first introduced by \cite{MorelAdams}, and more thoroughly studied by \cite{HKO_Adams}
 and \cite{DIAdams}. We state it only over the base field $\CC$ and modulo $2$ as this is our main focus.
 We leave to the reader the formulation of the other cases.
\begin{prop}
We work over the base field $k=\CC$, and modulo the prime $\ell=2$.
 Then there exists a \emph{weight graded} \emph{motivic Adams spectral sequence}, with a trigraded multiplicative structure,
 of the following form:
$$
E_2^{s,t,w}=\Ext^{s,t,w}_A(\M_2,\M_2) \Rightarrow \pi_{t-s,w}\big(\hat \un\big)=:\hat \pi_{t-s,w}^\CC
$$
with differentials on the $r$-th page of tri-degree $(r,r-1,0)$. 
\begin{itemize}
\item  the $\Ext$ group is computed in the category of bigraded comodules over the Hopf algebroid
 $(A,\M_2)$ (as defined in \ref{sec:hmot-torsion}), $s$ being the degree of the extension group,
 and $(t,w)$ being the internal degree of that category;
\item on the abutment, $\hat \un$ denotes the $2$-completion (defined in \ref{sec:intern-comp&loc})
 of the motivic sphere spectrum,
 or equivalently its nilpotent $\HMot\FF_2$-completion according to the preceding example.
\end{itemize}
In addition, the spectral sequence is strongly convergent, and the filtration
 on the abutment is finite in each \emph{motivic stem} $f=(t-s)$.
\end{prop}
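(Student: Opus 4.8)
The plan is to specialize the general machinery of Sections~\ref{sec:nilpotent} and~\ref{sec:cobar} to the stable presentable symmetric monoidal $\infty$-category $\iC=\iSH(\CC)$ and to the $E_\infty$-algebra $\E=\HMot\FF_2$, working throughout --- as in Example~\ref{ex:AdamsSSq}(2) --- with the $\ZZ$-graded object $\HMot\FF_2(*)$, so that the Tate twist supplies the third (weight) grading $w$, automatically compatible with all products. The construction of Section~\ref{sec:nilpotent} then immediately produces a trigraded multiplicative spectral sequence
$$
E_1^{s,t,w}=\pi_{t-s,w}\big(\un,\HMot\FF_2\otimes\bar\E^s\big)\ \Longrightarrow\ \pi_{t-s,w}\big(\un,\hat{\un}^{\HMot\FF_2}\big),
$$
with differentials $d_r$ of tridegree $(r,r-1,0)$ by the standing indexing conventions. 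For the abutment, Example~\ref{ex:concrete-complete}(4) applies verbatim: the field $\CC$ is algebraically closed of characteristic $0$, hence of finite (indeed trivial) $2$-cohomological dimension, and $-1$ is a sum of squares; therefore the $\HMot\FF_2$-nilpotent completion of $\un$ agrees with its $2$-completion $\hat\un$, and the abutment is $\hat\pi_{t-s,w}^\CC$.

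Next I would identify the $E_2$-page. By Proposition~\ref{prop:cobar&standard} the standard $\HMot\FF_2$-Adams tower is equivalent to the totalization tower of the cobar construction $\CB^\bullet(\HMot\FF_2)$ (whose $A_\infty$-, in fact $E_\infty$-, structure is available since $\HMot\FF_2$ is a commutative algebra in $\iSH(\CC)$, see Remark~\ref{rem:mot-modules}). By Corollary~\ref{cor:compute-E2} it then suffices to verify the weak Künneth property for $\E=\HMot\FF_2$ over $\CC$, that is, the flatness of $A=(\HMot\FF_2)_{**}(\HMot\FF_2)$ over $\M_2$. This is immediate from the presentation recalled in Section~\ref{sec:hmot-torsion}: the relations $\tau_i^2=\tau\xi_{i+1}$ exhibit $A$ as a \emph{free} $\M_2$-module on the monomials $\prod_i\tau_i^{\epsilon_i}\prod_j\xi_j^{a_j}$ with $\epsilon_i\in\{0,1\}$. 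Consequently $E_2^{s,t,w}=\Ext^{s,t,w}_A(\M_2,\M_2)$, the $\Ext$ taken in bigraded comodules over the Hopf algebroid $(\M_2,A)$, with its trigraded commutative multiplication inherited from the $E_\infty$-structure exactly as in the topological case of Section~\ref{sec:Adams-Novikov}.

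Finally comes convergence. Conditional convergence in Bousfield's sense is formal once $E_1^{s,t,w}=0$ for $s>t$: since $\HMot\FF_2$ is connective for the homotopy $t$-structure (Example~\ref{ex:piSA}(2)), the smash powers $\bar\E^{\otimes s}$ are increasingly connected, so $\pi_{t-s,w}(\un,\HMot\FF_2\otimes\bar\E^s)$ vanishes in the required half-plane. For \emph{strong} convergence and finiteness of the filtration in each motivic stem $f=t-s$, by \cite[Prop.~6.3]{BousLoc} it is enough that the spectral sequence be concentrated in a bounded region from $E_2$ onward in each pair $(f,w)$, i.e.\ a vanishing line for $\Ext_A(\M_2,\M_2)$. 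Here one exploits Corollary~\ref{cor:mot-cl-Hopf-algebroid}: after inverting $\tau$ the Hopf algebroid becomes $(\FF_2,A_\topo)\otimes_{\FF_2}\FF_2[\tau,\tau^{-1}]$, so the $\tau$-inverted motivic $\Ext$ is the classical Adams $E_2$-term base-changed along $\FF_2\to\FF_2[\tau,\tau^{-1}]$, and hence obeys the classical Adams vanishing line of slope $1/2$. Running the long exact sequence in $\Ext$ attached to multiplication by $\tau$ on $\M_2$ (the $\tau$-Bockstein) against this classical bound, together with finite generation of $\Ext_A(\M_2,\M_2)$ over $\M_2=\FF_2[\tau]$ in each internal degree, propagates the vanishing line to the uncompleted groups and shows that for fixed $(f,w)$ only finitely many Adams filtrations $s$ contribute; boundedness then forces $\underset{r}{\lim}{}^1 E_r^{s,t,w}=0$ and a finite filtration on each $\hat\pi_{f,w}^\CC$. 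I expect this last interplay between the $\tau$-adic and the Adams filtrations --- rather than the formal input, which is entirely assembled in this section --- to be the genuine obstacle.
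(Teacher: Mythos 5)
Your overall plan is the paper's plan --- specialize the Bousfield--Adams machinery of Sections~\ref{sec:nilpotent}--\ref{sec:cobar} to $\iC=\iSH(\CC)$ and $\E=\HMot\FF_2$, graded by Tate twists, and use Example~\ref{ex:concrete-complete}(4) for the abutment --- but two steps carry genuine gaps. First, you identify the weak K\"unneth property of Corollary~\ref{cor:compute-E2} with flatness of $A$ over $\M_2$. That implication is only valid in topology (the remark after the corollary is careful to say ``in our examples from topology''): it rests on the K\"unneth spectral sequence, which in $\iSH(k)$ is only available --- and only converges to the right target --- for \emph{cellular} motivic spectra. Your observation that $A$ is free over $\M_2$ is correct and is one of the two needed inputs, but the missing ingredient is cellularity of $\HMot\FF_2$, a theorem of Hoyois \cite{HoyoisHM} using \cite{DIcell}; alternatively one may invoke the direct verification in \cite[Prop.~7.5]{DIAdams}. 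As written, your argument silently assumes a K\"unneth spectral sequence without restriction, which is exactly what fails for non-cellular motivic spectra.

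Second, for strong convergence you propose to transport the classical Adams vanishing line through a $\tau$-Bockstein, using Corollary~\ref{cor:mot-cl-Hopf-algebroid} to identify the $\tau$-inverted $\Ext$ and finite generation over $\FF_2[\tau]$ to control the $\tau$-torsion. You rightly flag this as the genuine obstacle: bounding the $\tau$-power torsion in each tridegree is not a formal consequence of the $\tau$-inverted vanishing line, and finite generation of $\Ext_A(\M_2,\M_2)$ over $\FF_2[\tau]$ in each internal degree itself requires an argument that you do not supply. The paper instead cites \cite[Cor.~7.15]{DIAdams}, where the motivic vanishing line is established by analyzing a minimal resolution of $\M_2$ over the motivic Steenrod algebra, in close analogy with the classical Adams argument rather than by reduction to it. Making your $\tau$-Bockstein route precise would essentially amount to re-deriving that result, so the detour does not save the work it was meant to avoid.
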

Indeed, this is simply the $\HMot\FF_2$-Adams spectral sequence constructed above.
 To get the correct form of the $E_2$-term, one needs the appropriate weak K\"unneth property to apply Corollary \ref{cor:compute-E2}.
 This is proved in \cite[Proposition 7.5]{DIAdams}, but it follows more generally from the fact $\HMot \FF_\ell$ is cellular
 according to \cite{HoyoisHM} using \cite{DIcell}.
 The last assertion follows from \cite[Corollary 7.15]{DIAdams}, which establishes vanishing properties for the motivic Adams spectral sequence
 analogous to that of the Adams spectral sequence.
 
\begin{rema}
As in the topological case, \cite{IWX} (and all the other papers in this topic)
 uses a different grading convention to depict the $r$-th page of the motivic Adams spectral sequence.
 More precisely, the group  $E_r^{s,t,w}$ is displayed on a plane, following the conventions of the Adams charts,
 as described in Step 1 of Section~\ref{sec:cl-computing}:
\begin{itemize}
\item the lines are indexed by the integer $s$, which is called the \emph{Adams degree}. In particular, the picture is concentrated in degree $s \geq 0$;
\item the columns are indexed by the integer $f=t-s$, which is called the \emph{stem}.
\item the picture has to be thought of as a projection of a three-dimensional picture, along the axis represented by the index $w$,
 called the \emph{weight}. 
\end{itemize}
To summarize, a group of grading $(s,t,w)$ is pictured in the point of coordinates $(f=t-s,s)$.
 Then the differentials on the $r$-th page have tri-degree $(1,r-1,0)$.
 Moreover, when displaying the $E_r$-term for $r$ big enough,
 each column represents the gradings of the abelian group $\hat \pi_{f,*}^\CC$.
\end{rema}

There is a deep interplay between the motivic Adams spectral sequence and the (classical) Adams spectral
 sequence. Roughly speaking, one gets back the latter from the former by inverting the motivic element $\tau$.
 More precisely, building on Corollary \ref{cor:mot-cl-Hopf-algebroid}, Dugger and Isaksen proved
 the following remarkable result.\footnote{This is stated explicitly in \cite[Proposition 3.0.2]{IsakStems}.}
\begin{theo}\label{thm:DI}
Keep the assumptions of the preceding proposition.
 Then the complex realization functor induces an isomorphism of weight-graded
 $\M_2[\tau^{-1}]$-linear spectral sequences:
$$
E_{*,\HMot \FF_2}^{***}[\tau^{-1}] \xrightarrow \sim \E_{*,\sH\FF_2}^{**} \otimes_{\FF_2} \M_2[\tau^{-1}]
$$
where we have indicated the ring spectra for more clarity.
\end{theo}
In fact, one can derive a direct proof using the construction of $\E_*$-Adams spectral sequences
 from the cobar resolution and Corollary \ref{cor:F_l-compare-HM-HB}.

As both spectral sequences are strongly convergent, one deduces the following corollary.
\begin{coro}
Consider the above assumptions.
 Then the element $\tau$ lifts to a (homological) bidegree $(0,-1)$ element in 
 the $\hat \pi_{**}^\CC$ motivic stable homotopy groups of the $2$-completed motivic sphere,
 that we continue to denote by $\tau$.
 Moreover, the complex realization functor induces an isomorphism of bigraded $\M_2[\tau^{-1}]$-algebras:
$$
\hat \pi_{**}^\CC[\tau^{-1}] \simeq \piS * \otimes_{\ZZ} \ZZ_2[\tau,\tau^{-1}]
$$
where on the left-hand side, one has localized with respect to the element $\tau$,
 and the isomorphism maps the motivic class $\tau$ to the indeterminate $\tau$ on the right-hand side.
\end{coro}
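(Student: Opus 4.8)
The plan is to deduce this corollary directly from Theorem~\ref{thm:DI} by passing from the $E_\infty$-pages of the two Adams spectral sequences to their abutments. First I would recall that both the motivic Adams spectral sequence over $\CC$ (at the prime $2$) and the classical Adams spectral sequence are strongly convergent: the motivic one by the preceding proposition (finite filtration in each motivic stem, hence $\lim^1$-vanishing), and the classical one by the vanishing region recalled in Section~\ref{sec:Adams-topo}. The localization functor $(-)[\tau^{-1}]$ is exact and commutes with the formation of $E_\infty$-pages, since on each tri-degree $(s,t,w)$ the spectral sequence is concentrated in a bounded region from $E_2$ onward, so passing to the colimit along multiplication by $\tau$ is harmless. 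Therefore Theorem~\ref{thm:DI} gives an isomorphism of $E_\infty$-pages
$$
E_{\infty,\HMot\FF_2}^{***}[\tau^{-1}] \xrightarrow{\ \sim\ } E_{\infty,\sH\FF_2}^{**} \otimes_{\FF_2} \M_2[\tau^{-1}].
$$

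Next I would produce the class $\tau$ on the abutment. By Corollary~\ref{cor:F_l-compare-HM-HB} the element $\tau \in \M_2^{0,-1} = E_2^{0,0,-1}$ of the motivic Adams spectral sequence is a permanent cycle: it is in Adams filtration $s=0$, it supports no differential for degree reasons (differentials on $E_r$ raise $s$ by $1$ and there is nothing in negative $s$ that could hit it, while $d_r(\tau)$ would land in filtration $s=r$ of the $0$-stem, which vanishes since $\hat\pi_{0,w}^\CC$ is concentrated in Adams filtration $0$), and it is not a boundary. Hence it detects a genuine class, still denoted $\tau$, in $\hat\pi_{0,-1}^\CC$. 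Multiplication by this class is compatible with the multiplicative structure of the spectral sequence, so inverting $\tau$ on the abutment is compatible with inverting $\tau$ on each page.

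Then I would assemble the comparison of abutments. Applying $(-)[\tau^{-1}]$ to the convergent filtration on $\hat\pi_{**}^\CC$ and using that localization is exact and commutes with the (now finite, after localization, in each bidegree) filtration, one gets that $\hat\pi_{**}^\CC[\tau^{-1}]$ has associated graded $E_{\infty,\HMot\FF_2}^{***}[\tau^{-1}]$. On the classical side, $\piS * \otimes_\ZZ \ZZ_2$ has associated graded $E_{\infty,\sH\FF_2}^{**}$ (this is just the convergence of the classical $2$-complete Adams spectral sequence, with $\hat\pi_i^S = \piS i \otimes \ZZ_2$ as recalled in Section~\ref{sec:Adams-topo}), so the right-hand side $\piS * \otimes_\ZZ \ZZ_2[\tau,\tau^{-1}]$ has associated graded $E_{\infty,\sH\FF_2}^{**}\otimes_{\FF_2}\M_2[\tau^{-1}]$. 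The complex realization functor $\rho_\CC$ induces on abutments a map of filtered bigraded $\M_2[\tau^{-1}]$-algebras $\hat\pi_{**}^\CC[\tau^{-1}] \to \piS * \otimes_\ZZ \ZZ_2[\tau,\tau^{-1}]$ which is compatible with the Adams filtrations and induces the above isomorphism on associated graded; hence it is itself an isomorphism. That $\rho_\CC$ sends the motivic $\tau$ to the indeterminate $\tau$ is built into the identification in Corollary~\ref{cor:F_l-compare-HM-HB} (via $\gamma_{\mathrm B*}(\tau) = c$) together with the fact that $\rho_\CC$ on $\M_2[\tau^{-1}] = \HB\FF_2^{**}(\CC)$ is the canonical isomorphism onto $\FF_2[c,c^{-1}]$.

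The main obstacle I anticipate is making the passage from $E_\infty$-pages to abutments fully rigorous, i.e.\ checking that $(-)[\tau^{-1}]$ genuinely commutes with the inverse limit defining the completed homotopy groups and with the Adams filtration, and that the resulting filtration on $\hat\pi_{**}^\CC[\tau^{-1}]$ is exhaustive, Hausdorff, and complete so that ``isomorphism on associated graded implies isomorphism'' applies. This is where the strong convergence statements in the preceding proposition and in \cite{BousLoc} are essential, together with the fact that in each fixed motivic stem the spectral sequence lives in a bounded strip, so that after inverting $\tau$ the filtration in each bidegree is still finite. Once this bookkeeping is in place, the statement follows formally; the genuinely new input is entirely contained in Theorem~\ref{thm:DI} and Corollary~\ref{cor:mot-cl-Hopf-algebroid}.
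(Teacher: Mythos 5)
Your proof is correct and takes the same route the paper takes, namely passing from Theorem~\ref{thm:DI} to abutments via strong convergence of both Adams spectral sequences; indeed the paper's entire ``proof'' is the single sentence ``As both spectral sequences are strongly convergent, one deduces the following corollary,'' with the reference to \cite{HKO_Adams} offered only as an alternative, more direct construction of the lift of $\tau$. One small slip worth flagging: $d_r(\tau)$ lands in $E_r^{r,r-1,-1}$, which is in stem $f=t-s=-1$ rather than the $0$-stem, and the reason it vanishes is the standard vanishing region $t<s$ of the motivic Adams $E_2$-page, not that $\hat\pi_{0,w}^\CC$ is concentrated in Adams filtration $0$ (it is not); the conclusion that $\tau$ is a permanent cycle is unaffected.
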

In particular, we have obtained a map
\begin{equation}
\tau:\hat \un(-1) \rightarrow \hat \un
\end{equation}
 which lifts the previously defined map $\tau:\HMot\FF_2(-1) \rightarrow \HMot \FF_2$.
 This is a key player of the computations of \cite{IWX}.
 The reader may appreciate the direct and elegant construction of this lift due
 to \cite[Remark following Lemma 23]{HKO_Adams} and based on Morel's Theorem~\ref{thm:Morel-stable}.

\begin{rema}
One can check that the isomorphism of the above corollary is compatible with the one constructed by Levine
 in Theorem \ref{thm:LevinePi} for $k=\CC$, using the canonical map
 $\pi_{**}^\CC \rightarrow \hat \pi_{**}^\CC$.
\end{rema}

\subsection{Deforming homotopy theories via the motivic class $\tau$}

\subsubsection{The special fiber of $\tau$}
We learned in the end of the previous section that inverting the motivic element $\tau$
 --- which in particular kills all $\tau$-torsion elements ---
 allows one to recover the classical Adams spectral sequence from the
 motivic Adams spectral sequence.

On the other hand,  the pages of the motivic Adams spectral sequence may contain non-trivial $\tau$-torsion classes,
 and they sometimes hide extensions that still are visible in the abutment. To better understand this phenomenon,
 \cite[Chapter 5]{IsakStems}, introduces the \emph{cofiber of $\tau$} denoted
 by $C\tau$ and defined as the following homotopy cofiber in the $\infty$-category
 of $2$-complete motivic spectra\footnote{therefore, it can be computed by first taking the homotopy cofiber
 in the motivic homotopy category and then applying the $2$-completion functor;}
\begin{equation}\label{eq:tau-cofiber}
\hat \un(-1) \xrightarrow \tau \hat \un \xrightarrow i C\tau \xrightarrow \partial \hat \un(-1)[1].
\end{equation}
This procedure of killing homotopy classes is very classical.
 A remarkable result of \cite{Gheorghe} is that, in this particular case,
 the cofiber $C\tau$ not only acquires a ring structure in homotopy category, but can also be equipped
 with an $E_\infty$-ring structure compatible with the canonical map $i$.
 This allows one to define the $\infty$-category $C\tau-\mathrm{mod}$ of modules over $C\tau$,
 which can be thought of as \emph{$\tau$-torsion $2$-complete motivic spectra over $\CC$}.
 Although preliminary computations of \cite[Proposition 6.2.5]{IsakStems} may suggest such a connection,
 the following beautiful result proved by \textcite[Corollary 1.2]{GWX} reveals another surprising bridge between motivic and classical
 homotopical invariants.
\begin{theo}\label{thm:tau-cofib}
With the above notation, there exists a canonical equivalence of stable $\infty$-categories
$$
C\tau-\mathrm{mod}^{\mathrm{cell}} \simeq \mathrm{Stable}(\BP_*\BP-\mathrm{comod}^\mathrm{ev})
$$
where:
\begin{itemize}
\item the left-hand side is made by the cellular $C\tau$-modules, i.e.,
 the full sub-$\infty$-category spanned by colimits of $C\tau$-modules
 of the form $C(\tau)(q)[p]$ for any pair $(p,q) \in \ZZ^2$;
\item the right-hand side is Hovey's stable $\infty$-category of even comodules
 over the Hopf algebroid $(\BP_*,\BP_*\BP)$, obtained by localizing complexes of such comodules
 along homotopy isomorphisms.
\end{itemize} 
\end{theo}

\begin{rema}
\begin{enumerate}
\item This result can be reformulated in terms of stacks:
 the $\infty$-category of cellular $C\tau$-modules
 is equivalent to the ind-$\infty$-category of perfect complexes on the moduli stack of
 formal group laws $\mathcal M_{FGL}$ over $\ZZ_2$ (see also Remark \ref{rem:stacks}).
 This interpretation essentially underlies the proof of Corollary 1.2 given by \cite{GWX},
 as discussed after Remark 4.15 therein.
\item The preceding theorem was generalized by \cite{BJWX}
 through the construction of the so-called \emph{Chow-$t$-structure} on the stable motivic homotopy
 category $\iSH(k)$ for any base field $k$.

This $t$-structure can be described as the unique one whose non-negative objects
 are generated under colimits and extension by the Thom spaces $\Th(v)$ of a virtual vector bundle $v$
 over a smooth \emph{and proper} $k$-scheme $X$.\footnote{The name is likely inspired by the work of Bondarko,
 who defined the \emph{Chow weight structure} on the category of motivic complexes over $k$
 using similar types of generators. See, e.g., \cite[\textsection 7.1]{Bondarko} for effective motivic complexes.}
 The authors then relate the invariants associated with this $t$-structure --- such as truncations, its heart, and heart-valued objects ---
 to even comodules over appropriate Hopf algebroids, after inverting the characteristic exponent $e$ of the base field $k$.

As an example,
 they identify the cellular heart of the Chow-$t$-structure on $\SH(k)[e^{-1}]$
 with the $e$-localized $\infty$-category of $(\MU_*,\MU_*\MU)$-modules (see Theorem 1.12).
\end{enumerate}
\end{rema}

The important point from the computational perspective is the following corollary,
 which follows from the previous theorem (see \cite{GWX}, Theorem~1.3 for the statement
 and Part~2 for details).
\begin{coro}
There is an isomorphism of spectral sequences between the motivic Adams spectral sequence for
 $C\tau$ and the algebraic Adams-Novikov spectral sequence.\footnote{The latter
 is an algebraic spectral sequence based on resolutions of $\BP_*\BP$-comodules,
 and computes the $E_2$-term of the Adams-Novikov spectral sequence.
 See \emph{loc. cit.} \textsection9.1,
 and also Remark \ref{rem:ssp->E2} for a broader picture.}
\end{coro}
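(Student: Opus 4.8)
The plan is to deduce this corollary formally from Theorem~\ref{thm:tau-cofib}, by exhibiting both spectral sequences as the Adams spectral sequence of a unit object, built from a distinguished ring object, in two symmetric monoidal stable $\infty$-categories that the theorem identifies.

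First I would relocate the motivic Adams spectral sequence for $C\tau$ inside $C\tau-\mathrm{mod}^{\mathrm{cell}}$. By construction it is the $\HMot\FF_2$-based Adams spectral sequence of the object $C\tau$ in $2$-complete motivic spectra over $\CC$, associated with the $\HMot\FF_2$-Adams tower $\overline{\HMot\FF_2}^{\wedge s}\wedge C\tau$, where $\overline{\HMot\FF_2}=\mathrm{fib}(\un\to\HMot\FF_2)$. Each term is a $C\tau$-module and each map a $C\tau$-module map, and since $(-)\wedge C\tau$ is exact one checks $\overline{\HMot\FF_2}^{\wedge s}\wedge C\tau=\overline{\HMot\FF_2\wedge C\tau}^{\wedge_{C\tau} s}$, so this tower \emph{is} the $\big(\HMot\FF_2\wedge C\tau\big)$-Adams tower of the unit $C\tau$ in the $\infty$-category of $C\tau$-modules. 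Here $\HMot\FF_2\wedge C\tau$ is an $E_\infty$-algebra in $C\tau-\mathrm{mod}$ (from the $E_\infty$-structures on $\HMot\FF_2$ and, by \cite{Gheorghe}, on $C\tau$), and it is a cellular $C\tau$-module since $\HMot\FF_2$ is cellular in $\iSH(\CC)$ (\cite{HoyoisHM}), so the whole tower stays inside $C\tau-\mathrm{mod}^{\mathrm{cell}}$. Finally, as the forgetful functor $C\tau-\mathrm{mod}\to\iSH(\CC)$ is right adjoint to $(-)\wedge C\tau$, one has $[\un,M]_{\iSH(\CC)}=[C\tau,M]_{C\tau-\mathrm{mod}}$ for every $C\tau$-module $M$; applying this termwise identifies the motivic Adams spectral sequence for $C\tau$, as a spectral sequence, with the $\big(\HMot\FF_2\wedge C\tau\big)$-based Adams spectral sequence of the unit, computed intrinsically in $C\tau-\mathrm{mod}^{\mathrm{cell}}$.

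Next I would transport this along the monoidal equivalence $\Phi\colon C\tau-\mathrm{mod}^{\mathrm{cell}}\xrightarrow{\ \sim\ }\mathrm{Stable}\big(\BP_*\BP-\mathrm{comod}^{\mathrm{ev}}\big)$ of Theorem~\ref{thm:tau-cofib}. Being a symmetric monoidal equivalence of stable $\infty$-categories, $\Phi$ sends unit to unit with $\Phi(C\tau)=\BP_*$, ring objects to ring objects, and cofiber sequences to cofiber sequences; hence it carries Adams towers to Adams towers and yields an \emph{isomorphism of spectral sequences} onto the $\Phi\big(\HMot\FF_2\wedge C\tau\big)$-based Adams spectral sequence of $\BP_*$ in $\mathrm{Stable}\big(\BP_*\BP-\mathrm{comod}^{\mathrm{ev}}\big)$. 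It remains to recognize this last spectral sequence as the algebraic Adams--Novikov spectral sequence. For this I would identify the ring object $\Phi\big(\HMot\FF_2\wedge C\tau\big)$: its homotopy is $(\HMot\FF_2)_{**}(C\tau)=\M_2/\tau\cong\FF_2$, concentrated in a single bidegree, with the $\pi_{**}C\tau$-module structure the augmentation, so $\Phi\big(\HMot\FF_2\wedge C\tau\big)$ is the comodule algebra underlying the algebraic Adams--Novikov resolution of $\BP_*$ --- heuristically, ``$\sH\FF_2$ done algebraically inside $\BP_*\BP$-comodules'' --- whose associated spectral sequence is by definition the algebraic Adams--Novikov spectral sequence, converging to $\Ext_{\BP_*\BP}(\BP_*,\BP_*)$. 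The comparison of $E_2$-pages is the earlier observation that the motivic Adams $E_2$-page for $C\tau$, namely $\Ext_A(\M_2,\FF_2)\cong\Ext_{A/\tau}(\FF_2,\FF_2)$, agrees with the algebraic Novikov $E_2$-page (cf.\ \cite[Proposition~6.2.5]{IsakStems}), via the change of rings splitting off the exterior generators $\tau_i$ of $A/\tau$ and the identification of its polynomial part with the classical dual Steenrod algebra; the content of the present corollary is that the categorical equivalence upgrades this to an isomorphism of \emph{all} pages. Matching abutments uses the other consequence of Theorem~\ref{thm:tau-cofib}, $\pi_{**}C\tau\cong\Ext_{\BP_*\BP}(\BP_*,\BP_*)$, together with strong convergence of the motivic Adams spectral sequence for $C\tau$ (from the vanishing lines of \cite{DIAdams}) and the classical strong convergence of the algebraic Novikov spectral sequence.

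The hard part will be precisely this last identification: checking that $\Phi\big(\HMot\FF_2\wedge C\tau\big)$ is exactly the comodule algebra whose Adams resolution \emph{is} the algebraic Adams--Novikov resolution, and reconciling the two indexing conventions so that the motivic differentials of tri-degree $(r,r-1,0)$ match those of the algebraic Novikov spectral sequence --- the motivic weight $w$ corresponding to the internal Novikov weight, and $t-s$ to the algebraic stem. The remaining points (cellularity of $\HMot\FF_2\wedge C\tau$ as a $C\tau$-module, exactness of $(-)\wedge C\tau$, and the change-of-rings computation of $\Ext_{A/\tau}$) are routine. For the bookkeeping and the explicit comparison of resolutions I would follow \cite{GWX}, Part~2.
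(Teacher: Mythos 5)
The paper gives no proof of this corollary: it simply points to \cite{GWX}, Theorem~1.3 and Part~2, treating the statement as a formal consequence of Theorem~\ref{thm:tau-cofib}. Your proposal is therefore not being compared against a written proof in the paper, but against the argument of \cite{GWX} itself, and your outline is essentially that argument. You correctly (a) reinterpret the motivic Adams spectral sequence for $C\tau$ as the $(\HMot\FF_2\wedge C\tau)$-based Adams spectral sequence of the unit internally to $C\tau$-modules, using that $(-)\wedge C\tau$ is exact and monoidal, (b) transport along the equivalence of Theorem~\ref{thm:tau-cofib}, and (c) try to identify the image of $\HMot\FF_2\wedge C\tau$ with $\BP_*/I$ so that the resulting Adams tower is the $I$-adic (Novikov) tower of $\BP_*$. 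You also correctly flag that the real content lies in step (c) and in the reindexing. Two remarks worth making explicit. First, Theorem~\ref{thm:tau-cofib} as stated in the paper only asserts an equivalence of stable $\infty$-categories; you need the stronger statement of \cite{GWX} that it is \emph{symmetric monoidal} (and $t$-exact for the relevant $t$-structures), which you implicitly invoke --- this is the right move, but it is an additional input not contained in the paper's formulation. Second, the heuristic ``its homotopy is $\FF_2$ concentrated in one bidegree, hence it is the comodule $\BP_*/I$'' needs the translation of gradings worked out in \cite{GWX}: the motivic bidegree $(p,q)$ decomposes into the Chow degree $p-2q$ (cohomological degree on the comodule side) and the internal weight $2q$, and it is this $t$-exact, weight-preserving form of the equivalence that forces $\Phi(\HMot\FF_2\wedge C\tau)$ to lie in the heart and hence to be determined by its $\pi_{**}$. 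With that supplement your sketch is sound and matches the cited source.
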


\subsubsection{Final procedure to compute motivic and classical stable stems}
The innovative ingredient introduced by \cite{IWX} for computing stable stems
 out of motivic stable stems is the following \emph{deformation diagram}
\begin{equation}\label{eq:def-tau}
\xymatrix@=40pt{
{}\hat \un[\tau^{-1}] & \hat \un\ar[l]\ar@<-3pt>_i[r] & C\tau\ar@<-3pt>_{\partial}[l]
}
\end{equation}
where the left-hand (resp. right-hand) side is thought as the generic (resp. special) fiber.
 Concretely, this diagram means two facts:
\begin{enumerate}
\item the motivic Adams spectral sequence is constrained along the maps $i$ and $\partial$
 by the algebraic Adams-Novikov spectral sequence, which corresponds according to the previous theorem
 to the Adams spectral of the special fiber $C\tau$;
\item the generic fiber computes the classical Adams spectral sequence,  according to
 Theorem \ref{thm:DI}.
\end{enumerate}

From a practical point of view, this leads to the following computational strategy,
 which both draws on and refines the classical approach to computing stable stems
 described in Section~\ref{sec:cl-computing}:
\begin{enumerate}[label=$\mathrm{Step}^\tau$\arabic*., leftmargin=*]
\item Begin by computing the $E_2$-term of the motivic Adams spectral sequence.
 This tri-graded object captures refined information, including torsion in the motivic weight.
 The computation relies on algebraic tools such as the May spectral sequence,
 and reduces to an algorithmic problem which can be handled by computer.

Compute the pages of the algebraic Adams-Novikov spectral sequence
 using similar algebraic methods, that can also be handled by computer.
\item The previous step gives the determination of the Adams spectral sequence for the cofiber of $\tau$.
 Then the right part of the deformation diagram allows one to determine
 differentials of the motivic Adams spectral sequence,
 and also additional refined information (notably, \emph{Toda brackets}). 
 Ultimately, this allows one to determine the Adams motivic $E_\infty$-page up to a fixed range.
\item Solve the extension problem for determining the motivic stable stem
 from the information of the $E_\infty$-term (as in Step 3 in the original strategy \ref{sec:cl-computing}),
 using again the information coming from the special fiber.
 This implies finding the so-called \emph{hidden extensions}\footnote{as precisely defined in \cite{IWX}, Definition 2.10;}
 and in particular with respect to the motivic element $\tau$.
\item The last step is obvious, and uses the left part of the deformation diagram:
 read off the classical stable stems (and information on the classical Adams differentials)
 from the computations obtained in the previous step.
\end{enumerate}

This motivic approach has led to a significant breakthrough in the effective computation of stable stems.
 Thanks to the method introduced by \cite{IWX}, it is now possible
 to push the determination of the $2$-primary component\footnote{this is by far the most difficult primary part of the stable stems;}
 of the stable homotopy groups of spheres from dimension 66 up to dimension 90.\footnote{Albeit a few remaining uncertainties: four unresolved differentials
 in the motivic Adams spectral sequence, see Table~9 in \emph{loc. cit.}}

\subsubsection{Towards synthetic homotopy}
As in topology, $\mathrm{Step}^\tau$2\&3 cannot be made algorithmic and carried out by a computer.
 In particular, one of the key technical tools that the authors had to use is the construction of
 an intermediate motivic ring spectrum over $\CC$, called the \emph{motivic modular forms spectrum}.
 It is the motivic analog of the famous \emph{topological modular forms spectrum} of Ando, Hopkins, Strickland
 (see e.g. \cite{GoerssB}).
 The way they construct this object is both interesting and relevant to conclude this lecture.

One still works, as above, in the stable monoidal $\infty$-category $\iSH(\CC)^\wedge_2$,
 the $2$-completed motivic stable homotopy category over $\CC$.\footnote{From what we have seen before,
 it can be described in full generality with the left Bousfield localization of $\iSH(\CC)$
 with respect to the motivic spectrum $\un/2$. Under an appropriate finiteness assumption,
 this is also the Bousfield localization with respect to $\HMot \FF_2$; see Remark \ref{rem:Bousloc}.}
 As in the previous theorem and following \cite{DIcell},
 one restricts to the full sub-$\infty$-category $\iSHcell(\CC)^\wedge_2$ of cellular $2$-complete motivic spectra;
 that is the full sub-$\infty$-category of $\iSH(\CC)^\wedge_2$ spanned by colimits of motivic sphere spectra $\hat \un(q)[p]$
 for $(p,q) \in \ZZ^2$.\footnote{The author emphasizes that, from a motivic perspective, cellular objects should also be viewed as \emph{ind-Artin–Tate objects},
 since we work over the algebraically closed field $\CC$.
 Artin–Tate motives play a key role in the theory of motivic complexes
 and in our current understanding of $L$-functions and periods over number fields.}

With the aim of describing the latter $\infty$-category, \textcite{GIKR} discovered the idea that one can
 work directly with the Adams resolutions of spectra, framed in the language of filtered spectra.
 This leads them to the $\infty$-category $\Fun(\ZZ^{\op},\iSH^\wedge_2)$ of \emph{filtered $2$-complete spectra},
 which is presentable, stable and monoidal (using the Day convolution product).
 Via a fully faithful embedding, one can consider the tower Adams resolutions of spectra defined in \ref{prop:cobar&standard}
 viewed as objects in this $\infty$-category. The importance of the Adams-Novikov spectral sequence (Section \ref{sec:Adams-Novikov})
 justifies specifically considering the tower Adams resolution associated with the $2$-completion $\wMU$ of $\MU$:
$$
T_\bullet:=\Tot_\bullet \CB^\bullet(\wMU).
$$
In order to make the next theorem work, one looks
 at a suitably truncated tower, denoted by $\Gamma_\star(S^0)$ in \emph{loc. cit.},\footnote{The symbol $\star$ here
 replaces the symbol $\bullet$ that we have used up to now to suggest the underlying filtration on objects.
 They serve exactly the same purpose.}
 whose $w$-th term is given by the following formula:
$$
\Gamma_w(S^0):=\Tot_\bullet(\tau_{\geq 2w} \CB^\bullet(\wMU))
$$
where $\tau_{\geq 2w}$ is the truncation functor associated with the canonical $t$-structure on $\iSH$,
 and we apply it term-wise to the cosimplicial object $\CB^\bullet(\wMU)$
 (see Definition 3.2 of \emph{loc. cit.}).\footnote{The effect of this construction is that the spectral
 sequence associated to $\Gamma_\bullet(S^0)$ is a truncation of the Adams-Novikov spectral sequence associated to $\MU$.
 See \emph{loc. cit.} Remark 3.4.}
 It is proved in \emph{loc. cit.} that $\Gamma_\star(S^0)$ is actually an $E_\infty$-object in filtered spectra.
 In particular, one can consider the $\infty$-category of modules $\Gamma_\star(S^0)\!-\!\mathrm{mod}$.
 The following result, Theorem 6.12 of \emph{loc. cit.},
 sheds light on the relation between classical and motivic invariant that we have met
 on several occasions in this section.
\begin{theo}
There is an explicit pair of mutually inverse equivalences of stable monoidal $\infty$-categories:
$$
\xymatrix@=30pt{
\Gamma_\star(S^0)\!-\!\mathrm{mod}\ar@<2pt>^\sim[r] & \iSHcell(\CC)^\wedge_2.\ar@<2pt>^\sim[l]
}
$$
\end{theo}
Thanks to this surprising equivalence,
 the authors were able to define the motivic modular forms spectrum $\mathrm{mmf}$ in the right-hand side $\infty$-category,
 by transporting a topological construction done in filtered $2$-complete spectra.
 In fact, one can also realize the fundamental deformation diagram \eqref{eq:def-tau} directly
 in $\Gamma_\star(S^0)\!-\!\mathrm{mod}$.

\subsubsection{Epilogue}
The construction and properties of $\Gamma_\star(S^0)\!-\!\mathrm{mod}$ have since been axiomatized under the name
 \emph{synthetic homotopy theory} by \cite{Pstra}. Though arising from motivic methods,
 this framework is now fully topological and expected to play a central role in the future of algebraic topology.
 This stands as a beautiful example of the deep and productive interactions
 between algebraic geometry and algebraic topology through motivic homotopy theory.

\printshorthands 

\printbibliography

\end{document}